\documentclass[12pt,reqno]{amsart}

\usepackage[T1]{fontenc}
\usepackage{amsmath,amssymb,amsfonts,amsthm}
\usepackage[margin=1.5in]{geometry}
\usepackage{etoolbox}
\usepackage{hyperref}
\hypersetup{
  hidelinks,
  pdftitle={Average root numbers in two isotrivial families of elliptic curves},
  pdfauthor={Yijie Diao},
  pdfsubject={Average root numbers in two isotrivial elliptic families},
  pdfkeywords={average root numbers; isotrivial elliptic families; binary forms; polynomial values; quantitative transference principle; Euler products; del Pezzo surfaces of degree 1}
}

\AtEndEnvironment{thebibliography}{\enlargethispage{4\baselineskip}}

\numberwithin{equation}{section}

\newtheorem{theorem}{Theorem}[section]
\newtheorem{lemma}[theorem]{Lemma}
\newtheorem{proposition}[theorem]{Proposition}
\newtheorem{corollary}[theorem]{Corollary}
\theoremstyle{definition}
\newtheorem{definition}[theorem]{Definition}
\newtheorem{remark}[theorem]{Remark}

\renewcommand{\leq}{\leqslant}
\renewcommand{\le}{\leqslant}
\renewcommand{\geq}{\geqslant}
\renewcommand{\ge}{\geqslant}

\newcommand{\sumstar}{\sideset{}{^*}\sum}

\newcommand{\PP}{\mathbb{P}}
\newcommand{\FF}{\mathbb{F}}
\newcommand{\ZZ}{\mathbb{Z}}
\newcommand{\NN}{\mathbb{N}}
\newcommand{\QQ}{\mathbb{Q}}
\newcommand{\RR}{\mathbb{R}}
\newcommand{\CC}{\mathbb{C}}

\newcommand{\cc}{\mathbf{c}}

\DeclareMathOperator{\rank}{rank}
\DeclareMathOperator{\Gal}{Gal}
\DeclareMathOperator{\sign}{sign}
\DeclareMathOperator{\cont}{cont}

\newcommand{\rw}{W}

\renewcommand{\pmod}[1]{\,(\mathrm{mod}\,#1)}

\begin{document}
\raggedbottom

\title[Average root numbers in isotrivial families]
{Average root numbers in two isotrivial families of elliptic curves}
\author{Yijie Diao}
\address{ISTA\\
Am Campus 1\\
3400 Klosterneuburg\\
Austria}
\email{yijie.diao@ist.ac.at}
\subjclass[2020]{11G05 (11N32, 11N37, 14G05, 14J27)}
\keywords{root numbers, isotrivial elliptic families, del Pezzo surfaces of degree $1$}

\begin{abstract}
We study root numbers in the isotrivial families $y^2=x^3+a$ and $y^2=x^3+ax$.
For a broad class of fixed binary forms, we prove that the average root number over primitive pairs exists.
Assuming finiteness of the relevant Tate--Shafarevich groups, we deduce Zariski density for certain del Pezzo surfaces of degree $1$ arising from separable binary sextics.
We also establish explicit averages of root numbers for almost all polynomials of each fixed degree $d\geq2$, ordered by coefficient height.
The proof develops a new quantitative transference principle for polynomial values.
\end{abstract}

\maketitle

\setcounter{tocdepth}{1}
\tableofcontents

\section{Introduction}

A del Pezzo surface is a smooth projective surface with ample anticanonical class.
Every del Pezzo surface of degree $1$ over $\QQ$ has a rational point, namely the unique base point of its anticanonical pencil \cite[Section~2A]{VA}.
It is nevertheless unknown whether the rational points on every such surface are Zariski dense.
Blowing up the base point produces an elliptic surface over $\PP^1$, so the density problem can be approached through the arithmetic of its fibres.
The root number of an elliptic curve is the sign in the functional equation of its $L$-function.
The parity conjecture predicts that it is $(-1)^{\rank E(\QQ)}$.
Thus variation of root numbers in the fibres can provide evidence for rank variation and, in favourable cases, for the density of rational points on the surface.
To place our results in context, we first recall what is known about rational points on del Pezzo surfaces of degree $1$ and about root numbers in algebraic families.

For a del Pezzo surface of degree at least $3$, a rational point over an infinite field implies unirationality, by work of Segre \cite{Seg1,Seg2}, Manin \cite{Man}, Koll\'ar \cite{Kol}, and Pieropan \cite{Pie}.
In degree $2$, Salgado--Testa--V\'arilly-Alvarado \cite{STVA} gave analogous results under conditions on the point.
Degree $1$ is substantially less understood.
Koll\'ar and Mella \cite[Theorem~7]{KM} proved unirationality in the degree $1$ conic bundle case, but even Zariski density remains open in general.
The conjectural picture first formulated by Colliot-Th\'el\`ene and Sansuc \cite{CTS} and further discussed by Colliot-Th\'el\`ene \cite{CTAWS,CT-pencil} predicts that the Brauer--Manin obstruction is the only obstruction to the Hasse principle and weak approximation for smooth proper geometrically rational varieties; in particular, it predicts Zariski density whenever the Brauer--Manin set is nonempty.

Known density results for degree $1$ surfaces use several complementary methods.
Explicit parametrisations and base changes are used in work of Ulas \cite{Ula1,Ula2} and Jabara \cite{Jab}.
Using explicit constructions of multisections, Salgado--van~Luijk \cite[Theorem~1.4]{SvL} give several criteria for Zariski density, including the existence of a nodal fibre over a rational point of the base.
Desjardins--Winter \cite{DWi} prove density for a specific family using low-genus curves.
In his master's thesis, Nijgh \cite{Nijgh} extends their construction to a larger family, for which Demeio--Streeter--Winter \cite{DeStWi} establish the Hilbert property under corresponding hypotheses.
Desjardins--Jovanovic \cite{DJ25} construct further trisections of low genus.
Rank jumps on rational elliptic surfaces were studied by Salgado \cite{Sal12}, while Loughran--Salgado \cite{LS22} proved non-thinness results for rank-jumping fibres.
Other results include an effective study of diagonal sextics by Desjardins--Naskr\k{e}cki \cite{DN24}.
For rational isotrivial elliptic surfaces with nonzero $j$-invariant, Desjardins \cite{Des} proved Zariski density by geometric methods; this applies to the elliptic surfaces obtained from the singular $j=1728$ models considered below.

The systematic study of local root numbers in algebraic families was developed by Rohrlich \cite{Roh93,Roh96}.
For isotrivial families with potentially good reduction, Liverance \cite{Liv95} gave an early closed formula in the $j=0$ case, Halberstadt \cite{Hal98} treated the difficult primes $2$ and $3$, and Rizzo \cite{Riz03} developed explicit local and average formulas.
Manduchi \cite{Man95} and Grant--Manduchi \cite{GM97,GM98} connected such calculations to elliptic surfaces.
V\'arilly-Alvarado \cite{VA08,VA} first studied weak approximation and later introduced the root number method for families with $j$-invariants $0$ and $1728$.
Related topological questions over the reals for fibres of positive rank were studied by Kuwata--Wang \cite{KW93} and Munshi \cite{Mun07,Mun10}.

Helfgott's general framework \cite{Hel} expresses a family root number as a product of local arithmetic functions.
Subject to cancellation of Chowla type and squarefree value estimates for the polynomials of bad reduction, it gives zero average for certain families.
Desjardins \cite{DesVar,DesTwist,DesInt} obtained related variation results for rational and integer parameters and gave explicit criteria for polynomial twist families.
When this multiplicative-reduction mechanism is absent, however, the average need not vanish.
Bettin--David--Delaunay \cite{BDD18} classified six essentially distinct classes of non-isotrivial, potentially parity-biased families whose Weierstrass coefficients $a_2,a_4,a_6$ have degree at most $2$, and computed averages for two representative subfamilies of one class; Chinis \cite{Chi} subsequently computed the average for that entire class.
At the extremal end of this phenomenon, Chu--Desjardins \cite{ChuDes} gave necessary and sufficient arithmetic conditions for certain linear subfamilies to have constant root number on every integer fibre, extending Washington's classical example, and obtained partial analogues for a related quadratic class.

An earlier average of Rizzo \cite{Riz99}, ordered by the height of a rational twist parameter, naturally leads to primitive pairs after the parameter is written as a coprime numerator and denominator.
The squarefree-sieve approach to elliptic twists was initiated by Gouv\^ea--Mazur \cite{GM91}.
For binary forms whose irreducible factors have degree at most $6$, Greaves \cite{Gre92} established a squarefree sieve.

In this paper, we study the two isotrivial families
\begin{equation}\label{eq:intro-families}
y^2=x^3+F(z,w),
\quad
y^2=x^3+G(z,w)x,
\end{equation}
whose generic $j$-invariants are $0$ and $1728$, respectively.
For $a\in\ZZ\setminus\{0\}$, let $\rw(a)$ and $\rw^\star(a)$ denote the root numbers of
\[
y^2=x^3+a
\quad\text{and}\quad
y^2=x^3+ax,
\]
respectively; put $\rw(0)=\rw^\star(0)=0$.
For $x\ge1$, set
\begin{equation}\label{eq:def-Sprim}
S_{\mathrm{prim}}(x)
=\{(m,n)\in\ZZ^2:1\le m,n\le x,\ \gcd(m,n)=1\}.
\end{equation}
For an irreducible homogeneous form $h\in\QQ[z,w]$, let
\begin{equation}\label{eq:def-Kh}
K_h=
\begin{cases}
\QQ[T]/(h(T,1)),&\text{if $h(T,1)$ is nonconstant},\\
\QQ,&\text{if $h(T,1)$ is constant},
\end{cases}
\end{equation}
the residue field of the corresponding closed point of $\PP^1_\QQ$.
When $H\in\ZZ[z,w]$ is a nonzero homogeneous form, we write
\begin{equation}\label{eq:intro-factorisation}
H=c\prod_{i=1}^r h_i^{e_i},
\end{equation}
where $c\in\ZZ\setminus\{0\}$, the exponents $e_i$ are positive integers, and the $h_i$ are pairwise nonassociate primitive irreducible homogeneous forms.

We first consider fixed binary forms and average their root numbers over primitive pairs.
\begin{theorem}\label{thm:asy}
Let $F,G\in\ZZ[z,w]$ be nonzero homogeneous binary forms.
\begin{enumerate}
\item[(i)] Write $F=c\prod_{i=1}^r f_i^{e_i}$ as in \eqref{eq:intro-factorisation}.
Suppose that $\deg f_i\le6$ whenever $3\nmid e_i$.
Then there is a constant $C_F$ with $|C_F|\le1$ such that
\[
\lim_{x\to\infty}
\frac{\#\{(m,n)\in S_{\mathrm{prim}}(x):\rw(F(m,n))=\pm1\}}
{\#S_{\mathrm{prim}}(x)}
=\frac{1\pm C_F}{2}.
\]
If there exists a factor $f_i^{e_i}$ of $F$ satisfying
\[
3\nmid e_i
\quad\text{and}\quad
\QQ(\sqrt{-3})\not\subset K_{f_i},
\]
then $|C_F|<1$.

\item[(ii)] Write $G=c\prod_{i=1}^r g_i^{e_i}$ as in \eqref{eq:intro-factorisation}.
Suppose that $\deg g_i\le6$ whenever $e_i$ is odd.
Then there is a constant $C_G^\star$ with $|C_G^\star|\le1$ such that
\[
\lim_{x\to\infty}
\frac{\#\{(m,n)\in S_{\mathrm{prim}}(x):\rw^\star(G(m,n))=\pm1\}}
{\#S_{\mathrm{prim}}(x)}
=\frac{1\pm C_G^\star}{2}.
\]
If there exists a factor $g_i^{e_i}$ of $G$ satisfying
\[
e_i\ \text{odd}
\quad\text{and}\quad
\QQ(i)\not\subset K_{g_i},
\]
then $|C_G^\star|<1$.
\end{enumerate}
\end{theorem}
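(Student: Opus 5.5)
We will follow the standard route for isotrivial families: write the root number as a product of local factors, show that only finitely many local conditions plus one "global" multiplicative character matter, and then average. We treat (i); (ii) is identical with $\QQ(\sqrt{-3})$, cube classes and the prime $3$ replaced by $\QQ(i)$, fourth-power classes and the prime $2$.

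\textbf{Step 1: local formula.} For $y^2=x^3+a$ every prime $p\ge5$ gives potentially good reduction. Using Rohrlich's formulas (as made explicit by Liverance and Rizzo), for $p\ge5$ one has $W_p(a)=1$ if $6\mid v_p(a)$, and otherwise $W_p(a)$ is an explicit sign depending only on $v_p(a)\bmod 6$ and on $p\bmod 3$. In particular $W_p(a)=\left(\frac{-3}{p}\right)$ or $\left(\frac{-1}{p}\right)$ type symbols when $3\nmid v_p(a)$ or $2\nmid v_p(a)$, respectively. The factors $W_2,W_3,W_\infty$ depend only on $a$ modulo a bounded power of $2,3$ (times a sixth power) and on its sign. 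Hence
\[
\rw(a)=-\,\varepsilon_{2,3}(a)\prod_{p\ge5,\ 3\nmid v_p(a)}\Bigl(\tfrac{-3}{p}\Bigr)\cdot(\text{similar factor for }2\nmid v_p(a)).
\]
Only primes with $3\nmid v_p(a)$ contribute a nontrivial character; the $v_p$ even/odd part should collapse to a $p\bmod 3$ condition as well.

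\textbf{Step 2: factorise $F(m,n)$.} For coprime $(m,n)$ the values $f_i(m,n)$ are coprime outside a finite set $S$ of primes dividing resultants and $c$. Hence $v_p(F(m,n))=e_iv_p(f_i(m,n))$ for $p\notin S$. Factors with $3\mid e_i$ and $2\mid e_i$ contribute trivially. Factors with $3\mid e_i$ only contribute a symbol over primes with $v_p(f_i)$ odd, which equals a product over squarefull parts; this needs the $\deg\le 6$ hypothesis only when $3\nmid e_i$, so presumably the $3\mid e_i$ case reduces to $\prod_{p\mid f_i}\chi(p)^{v_p}$, a completely multiplicative character evaluated on $f_i(m,n)$. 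The relevant quantity is then $\prod_{p\,\|\,f_i(m,n)}\chi_{-3}(p)$ over the "squarefree kernel" of $f_i(m,n)$. Writing $f_i(m,n)=s\,t^2$ (or $s\,t^3$ times cube-free data), $\chi_{-3}(|f_i(m,n)|)$ is a Dirichlet character mod $3$ evaluated at a polynomial value, hence periodic in $(m,n)$ modulo $3^k$. Consequently the root number equals a function periodic modulo $M=\prod_{p\in S}p^{k_p}$, multiplied by correction factors supported on primes $p\notin S$ with $p^2\mid f_i(m,n)$ (resp.\ $p^3$).

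\textbf{Step 3: tail estimate, the main obstacle.} We truncate at $p\le z$. The resulting function is periodic modulo $\prod_{p\le z}p^{k}$, so its average over $S_{\mathrm{prim}}(x)$ exists and equals a convergent Euler-product-type density. The error is bounded by the proportion of primitive $(m,n)\le x$ with $p^2\mid f_i(m,n)$ for some $p>z$. This is exactly Greaves' squarefree sieve for binary forms of degree $\le6$ \cite{Gre92}, which controls $\#\{(m,n): p^2\mid f(m,n),\ p>z\}=o(x^2)$ uniformly as $z\to\infty$. This is where $\deg f_i\le6$ is used, for exactly those $f_i$ that contribute nontrivially. Letting $x\to\infty$ and then $z\to\infty$ gives the existence of $C_F$ as the limit of averages of $\rw(F(m,n))$. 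Here $|C_F|\le1$ is trivial, and the proportion of $F(m,n)=0$ is $o(x^2)$.

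\textbf{Step 4: $|C_F|<1$.} It suffices to find two positive-density sets of $(m,n)$ on which $\rw$ takes opposite values. Choose $f_i$ with $3\nmid e_i$ and $\QQ(\sqrt{-3})\not\subset K_{f_i}$. By Chebotarev in the Galois closure of $K_{f_i}(\sqrt{-3})$, there are infinitely many primes $p\equiv 2\pmod 3$, $p\notin S$, for which $f_i(T,1)$ has a simple root mod $p$; this is possible because $\sqrt{-3}\notin K_{f_i}$. Then the set of $(m,n)$ with $p\,\|\,f_i(m,n)$ and with prescribed residues modulo all other relevant moduli flips the local sign at $p$ relative to the set with $p\nmid f_i(m,n)$, with everything else fixed by CRT. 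Both sets have positive density by the limit in Step 3 applied to these congruence subsets. Hence the average lies strictly between $-1$ and $1$.
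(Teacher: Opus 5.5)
\textbf{The gap is in Step~4.} Your Steps~2--3 are essentially the paper's argument: absorb the symbols $\bigl(\frac{-1}{\ell}\bigr)^{v_\ell}$ and $\bigl(\frac{-3}{\ell}\bigr)^{v_\ell}$ into global characters of the value, so that only a squarefull correction survives at good primes, then use periodic approximation and Greaves. Step~4, however, breaks down. On the class with $v_p(f_i(m,n))=1$ the raw local factor $\rw_p$ may change, but the global root number does not. CRT fixes only finitely many primes, while the local root numbers at the other primes dividing $F(m,n)$ keep varying, and reciprocity forces their product to compensate. Your own Step~2 decomposition shows this. When $v_p(f_i(m,n))=1$, the symbol produced at $p$ is $\bigl(\frac{-1}{p}\bigr)$ for odd $e_i$, or $\bigl(\frac{-3}{p}\bigr)$ for $e_i\equiv2,4\pmod6$. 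That symbol is already recorded by the global character, which is fixed by the data at $2$, $3$ and the sign. So the residual factor at $p$ is $1$, exactly as when $p\nmid f_i(m,n)$. Once large squares are excluded (which is needed for the root number to be constant at all), your two sets therefore carry the \emph{same} root number. For example, $\rw(1)=\rw(253)=+1$, although $253=11\cdot23\equiv1\pmod{36}$ and $\rw_{11}(253)=\bigl(\frac{-1}{11}\bigr)=-1$; the change at $11$ is undone at $23$.

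\textbf{The fix is to use exact valuation $2$.} Lift the simple root modulo $p^2$ and perturb modulo $p^3$ to get a primitive class with $v_p(f_i(m,n))=2$ and $p\nmid f_j(m,n)$ for $j\ne i$, as in Lemma~\ref{lem:exact-valuation-classes}. Then $v_p(F(m,n))=2e_i\equiv2,4\pmod6$ since $3\nmid e_i$, and the absorbed symbols are squares. The residual factor is $\bigl(\frac{-3}{p}\bigr)=-1$ because $p\equiv2\pmod3$; this is where inertness in $\QQ(\sqrt{-3})$ genuinely enters. Part (ii) needs the same correction. There, $v_p(g_i)=1$ with $e_i$ odd yields only $\bigl(\frac{-2}{p}\bigr)$, which is absorbed into $\bigl(\frac{-2}{|n_2|}\bigr)$. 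By contrast, $v_p(g_i)=2$ gives $v_p(G)\equiv2\pmod4$ and residual factor $\bigl(\frac{-1}{p}\bigr)=-1$ for $p\equiv3\pmod4$.

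Several further points need filling in. The constant-sign sets must be cut down to pairs with $\ell^2\nmid f_j(m,n)$ for all good $\ell$ and all $j$ with $3\nmid e_j$. Positivity of their lower density inside a congruence class does not follow from the existence of the limit in Step~3. It needs positive local densities with a convergent product together with Greaves' tail bound (Lemma~\ref{lem:squarefree-class-density}). The word ``periodic'' in Steps~2--3 is only valid away from high powers of fixed primes (Lemma~\ref{lem:asy-qadic}) and on sectors where the sign is constant. Finally, Step~1 misstates the local factors: odd $v_p$ gives $\bigl(\frac{-1}{p}\bigr)$, so the case $v_p\equiv3\pmod6$ is not trivial.
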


The residue-field condition in each part is a sufficient condition supplied by the method; we do not claim that it is necessary.
It allows the Chebotarev argument to produce a prime at which opposite local signs occur on sets of positive density.
In either part, strict inequality implies that both signs occur with positive density.
The condition on the degrees of the factors is automatic for sextic forms in Theorem~\ref{thm:asy}(i) and quartic forms in Theorem~\ref{thm:asy}(ii).
Apart from this condition, no fixed prime divisor hypothesis is imposed.
Thus the factor-degree condition is precisely the range furnished by Greaves's squarefree sieve, while the conclusion strengthens root-number variation to a full limiting distribution.

For a separable binary sextic $F$, the surface
\begin{equation}\label{eq:DP1-family}
X_F:\quad y^2=x^3+F(z,w)\subset\PP(2,3,1,1)
\end{equation}
is a smooth del Pezzo surface of degree $1$.
Its anticanonical pencil has one rational base point, and blowing up that point gives an elliptic surface with generic $j$-invariant $0$.
V\'arilly-Alvarado \cite[Theorem~2.1]{VA} used a sieve method to prove conditional Zariski density for squarefree sextics under certain hypotheses.
Theorem~\ref{thm:asy} strengthens this result by giving the full average and removing the fixed prime divisor hypothesis.

\begin{corollary}\label{cor:conditional-density-j0}
Let $F\in\ZZ[z,w]$ be a separable binary sextic.
Suppose that some irreducible factor $f$ of $F$ satisfies
\[
\QQ(\sqrt{-3})\not\subset K_f.
\]
If the Tate--Shafarevich groups of elliptic curves over $\QQ$ with $j$-invariant $0$ are finite, then $X_F(\QQ)$ is Zariski dense in $X_F$.
\end{corollary}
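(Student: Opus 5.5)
We deduce Corollary~\ref{cor:conditional-density-j0} from Theorem~\ref{thm:asy}(i) by the standard root-number method of V\'arilly-Alvarado. A separable sextic has all exponents $e_i=1$, and its irreducible factors have degree at most $6$. So the hypotheses of Theorem~\ref{thm:asy}(i) hold, and the assumed factor $f$ gives $|C_F|<1$. Hence the set
\[
\mathcal{S}^-=\{(m,n)\in S_{\mathrm{prim}}(x):\rw(F(m,n))=-1\}
\]
has positive lower density $(1-C_F)/2>0$ in $S_{\mathrm{prim}}(x)$ as $x\to\infty$. In particular, there are infinitely many coprime pairs $(m,n)$ with $F(m,n)\neq0$ and $\rw(F(m,n))=-1$.

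Next we pass from root numbers to ranks. For such a pair, the fibre of the elliptic fibration over $(m:n)\in\PP^1(\QQ)$ is the curve $E_{(m,n)}:y^2=x^3+F(m,n)$ of $j$-invariant $0$. Assuming finiteness of its Tate--Shafarevich group, the $p$-parity theorem of Dokchitser--Dokchitser (already for $p=2$, or $p=3$ in the CM setting) gives $(-1)^{\rank E(\QQ)}=\rw(F(m,n))$. Hence $\rank E_{(m,n)}(\QQ)$ is odd, and in particular positive.

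It remains to show these positive-rank fibres give a dense set of points. Blowing up the base point of $X_F$ gives the elliptic surface $\mathcal{E}\to\PP^1$, birational to $X_F$. For $(m:n)$ as above, $E_{(m,n)}(\QQ)$ is infinite, so its rational points are Zariski dense in the fibre curve. Because $\mathcal{S}^-$ has positive density, infinitely many distinct points $(m:n)\in\PP^1(\QQ)$ occur; we use that $\gcd(m,n)=1$ makes distinct pairs give distinct points. The Zariski closure of $X_F(\QQ)$ therefore contains infinitely many distinct fibres. Any proper closed subset contains only finitely many fibres, so the closure is the whole surface. Transporting this through the birational map, which is an isomorphism away from the exceptional curve, gives density in $X_F$.

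The main obstacle is the parity step: one must ensure the cited parity result applies to all curves $y^2=x^3+a$ under the stated Sha hypothesis. If only the $p$-primary part is needed, the hypothesis is stronger than necessary, which is harmless. The rest is formal once Theorem~\ref{thm:asy}(i) supplies $|C_F|<1$.
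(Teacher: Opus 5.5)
Your proposal is correct and follows the paper's route. Separability gives all $e_i=1$, so Theorem~\ref{thm:asy}(i) yields $|C_F|<1$ and hence a positive density of primitive pairs with root number $-1$; the Dokchitser--Dokchitser parity theorem then gives positive rank on those fibres, and infinitely many such fibres force Zariski density on the blown-up surface and hence on $X_F$. Your observation that a proper closed subset contains only finitely many fibres is a repackaging of the paper's vertical/horizontal component argument. Also note that it is the positivity $m,n\ge1$ in $S_{\mathrm{prim}}(x)$, together with coprimality, that makes distinct pairs give distinct points of $\PP^1(\QQ)$.
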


Desjardins \cite[Theorem~6.1]{Des} analysed diagonal $j=0$ sextic families and classified their exceptional constant-root-number cases.
Corollary~\ref{cor:conditional-density-j0} overlaps with that work on the diagonal subfamily.
Its scope is different: it applies to arbitrary separable binary sextics satisfying the residue-field hypothesis and obtains conditional density from a positive density of negative-root-number fibres.

\begin{remark}
By the $p$-parity theorem of Dokchitser--Dokchitser \cite{DoDo}, the finiteness hypothesis implies that every smooth fibre with root number $-1$ has positive Mordell--Weil rank.
Although it is not claimed to be necessary, the residue field hypothesis in Corollary~\ref{cor:conditional-density-j0} cannot simply be omitted from a general root-number criterion of this form.
Indeed, V\'arilly-Alvarado \cite[Example~7.1]{VA} considers
\[
F(z,w)=27z^6+16w^6,
\]
for which the field hypothesis fails and every fibre
\[
y^2=x^3+27m^6+16n^6,
\quad [m:n]\in\PP^1(\QQ),
\]
has root number $+1$.
V\'arilly-Alvarado nevertheless proves Zariski density for this particular surface by exhibiting two independent non-torsion sections. 
\end{remark}

\medskip
For a separable binary quartic $G$, the surface
\begin{equation}\label{eq:DP1-family-1728}
X_G:\quad y^2=x^3+G(z,w)x\subset\PP(2,3,1,1)
\end{equation}
is not smooth and hence is not a del Pezzo surface under the convention used in \cite[Section~2B]{VA}.
Over $\overline{\QQ}$, it has four ordinary double points of type $A_1$, one above each zero of $G$.
Indeed, since $G$ is separable, we may use $t=G(z,w)$ as a local parameter at such a zero; the local equation is then $y^2=x^3+tx$, and the change of variable $s=t+x^2$ gives $y^2=xs$, the standard $A_1$ normal form.
Thus, $X_G$ is a singular del Pezzo surface of degree $1$, and its minimal resolution is a generalized del Pezzo surface of degree $1$.
Resolving these points and blowing up the base point gives a non-trivial rational elliptic surface $\mathcal E_G$ with generic $j$-invariant $1728$.
Desjardins \cite[Theorem~5.4]{Des} proved unconditionally that every such surface has a Zariski dense set of rational points by a geometric argument.

Theorem~\ref{thm:asy} keeps the binary form fixed and lets the primitive pair vary.
A complementary question is whether one can obtain uniform information when the polynomial itself also varies.
Ordering polynomials by coefficient height makes it possible to work in every fixed degree after excluding a density-zero set, although the length of the average is then tied to the coefficient height.

This ``almost all'' viewpoint has proved powerful in several related problems.
Skorobogatov--Sofos \cite{SS23} proved Schinzel's hypothesis for $100\%$ of polynomials, while Ter\"av\"ainen \cite{Ter24} proved sign changes for broad classes, including every polynomial outside a set of density zero, together with stronger cancellation in certain factorisable cases.
The squarefree-value estimates needed to control large prime squares have a parallel averaging theory: Filaseta \cite{Fil92} initiated their study over polynomials, and quantitative results were subsequently obtained by Shparlinski \cite{Shp13}, Browning--Shparlinski \cite{BrSh24}, Jelinek \cite{Jel}, and Sofos \cite{Sof26}.
On the oscillatory side, Wilson \cite{Wil25} proved Gaussian moment results for polynomial Chowla sums, while Kravitz--Woo--Xu \cite{KWX25} obtained distributional results for prime values and Liouville sign patterns.
Related averages of arithmetic functions appear in work of Destagnol--Sofos \cite{D-S-2} and Diao \cite{DiaoRandomBinary}.

A key technical tool is the quantitative transference theorem of Browning--Sofos--Ter\"av\"ainen \cite[Theorem~2.2]{BST}, which passes suitable estimates in arithmetic progressions to the values of almost all polynomials.
Theorem~\ref{thm:main} adapts this mechanism to the full root number, whose local factors can produce a nonzero mean.
To state it, fix $d\ge2$ and write
\begin{equation}\label{eq:def-gc}
\begin{gathered}
\cc=(c_0,\ldots,c_d),
\quad
g_\cc(t)=c_d t^d+\cdots+c_1 t+c_0,
\quad
|\cc|=\max_i|c_i|,
\end{gathered}
\end{equation}
and define
\begin{equation}\label{eq:def-SH}
S(H)=\{\cc\in\ZZ^{d+1}:|\cc|\le H,\ c_0\ne0\}.
\end{equation}

\begin{theorem}\label{thm:main}
Let $d\ge2$ and let $H$ be sufficiently large in terms of $d$.
Put
\[
x=H^{1/(4d)},
\quad
\theta=\frac{\log\log x}{\log\log\log x}.
\]
There is a set $\mathcal E_d(H)\subset S(H)$ satisfying
\[
\#\mathcal E_d(H)\ll\theta^{-1/3}H^{d+1}
\]
and containing every vector with $c_d=0$.
For every $\cc\in S(H)\setminus\mathcal E_d(H)$, there are real numbers $\kappa_\cc$ and $\kappa_\cc^\star$ satisfying $|\kappa_\cc|,|\kappa_\cc^\star|<1$ and
\begin{align}
\sum_{1\le n\le x}\rw(g_\cc(n))
&=\kappa_\cc x+O\bigl(\theta^{-1/3}x\bigr),
\label{eq:main-fixed}\\
\sum_{1\le n\le x}\rw^\star(g_\cc(n))
&=\kappa_\cc^\star x+O\bigl(\theta^{-1/3}x\bigr).
\label{eq:main-fixed-star}
\end{align}
\end{theorem}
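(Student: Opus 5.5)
The plan is to write the root number as a product of local factors and show that each factor is nearly periodic. Then I would use a quantitative transference principle, modelled on \cite[Theorem~2.2]{BST}, to replace the sum over $n\le x$ by its expected value for almost every $\cc$.

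\textbf{Local factorisation.} For $a\ne0$ we have $\rw(a)=-\prod_p w_p(a)$, and similarly for $\rw^\star$. For $p\ge5$ the local factor $w_p(a)$ depends only on $v_p(a)\bmod 6$ (respectively $\bmod 4$) and on a Legendre-type symbol: $\left(\frac{-3}{p}\right)$ or $\left(\frac{-1}{p}\right)$, together with the class of $a/p^{v_p(a)}$ when required. The factors at $p=2,3$ depend on $a$ modulo a bounded power of $2$ and $3$, after the $p$-adic valuation has been extracted (Rizzo, Halberstadt). Hence $w_p(a)\ne1$ forces $p\mid a$ with $v_p(a)\not\equiv0$ modulo $6$ (respectively $4$). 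I would fix a truncation parameter $z=z(x)$, for instance $z=\theta$ or a power of $\log\log x$, and split
\[
\rw(g_\cc(n))=\Bigl(-\prod_{p\le z}w_p(g_\cc(n))\Bigr)\cdot\prod_{p>z}w_p(g_\cc(n)).
\]
The first factor is periodic in $n$ modulo $Q=\prod_{p\le z}p^{k_p}$, with $k_p$ chosen so that high valuations occur with negligible density. For $p>z$ I would separate $p\,\|\,g_\cc(n)$ from $p^2\mid g_\cc(n)$. The contribution of $p^2\mid g(n)$ with $p>z$ is small on average over $\cc$ by the averaged squarefree-value estimates \cite{BrSh24,Sof26}: for most $\cc$ the number of such $n\le x$ is $\ll x/z+\ldots$. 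When $p\,\|\,g(n)$, the local factor is $\left(\frac{-3}{p}\right)$ (respectively $\left(\frac{-1}{p}\right)$ or $\pm$ a symbol). So the tail is a completely multiplicative-type function $\lambda$ of the $z$-rough part of $g(n)$, and $\lambda$ is a quadratic character twisted along prime factors.

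\textbf{Transference.} I need, for almost all $\cc$,
\[
\sum_{n\le x}\Phi(n)\lambda\bigl(g_\cc(n)_{>z}\bigr)=x\cdot\mathbb E[\cdots]+o(x),
\]
where $\Phi$ is $Q$-periodic. Since $H=x^{4d}$ is huge compared with $x$, the values $g_\cc(n)$ for $n\le x$ behave, as $\cc$ varies, like independent integers of size $H x^d$ equidistributed in residue classes. The BST mechanism bounds the variance over $\cc$ by reducing it to correlations $\sum_{|a|\le H'}f(a)f(a+k)$ in progressions. Here $f$ is the rough part of the local product, and for the variance computation I need only its mean in arithmetic progressions $a\equiv b \pmod q$ with $q$ up to a power of $x$. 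For the sign $\left(\frac{-3}{p}\right)$ attached to primes $p\,\|\,a$, this mean is governed by a Dirichlet series comparable to $L(s,\chi_{-3})$-type products. A Hal\'asz/Landau–Selberg–Delange argument gives mean value $\prod_{z<p}(1+\ldots)$, which tends to a limit. The loss from the rate of Hal\'asz over progressions gives savings like a power of $\log$ of $\theta$, which I expect explains the $\theta^{-1/3}$. Chebyshev's inequality over $\cc$ then gives the exceptional set $\mathcal E_d(H)$ of size $\ll\theta^{-1/3}H^{d+1}$. The set also contains $c_d=0$, which is negligible, and the $\cc$ for which $g_\cc$ is not separable or has an abnormal discriminant.

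\textbf{The constants and the bound $|\kappa|<1$.} I would set $\kappa_\cc$ equal to the local-density product $-\prod_p \mathbb E_n[w_p(g_\cc(n))\text{-contribution}]$ computed by the periodic main term. Each factor has absolute value at most $1$. To get strict inequality it suffices to find one prime $p$ at which the local average has absolute value at most $1-\delta$. I would use $p\equiv2\pmod3$ (respectively $p\equiv3\pmod4$) at which $g_\cc$ has a simple root modulo $p$: then $n$ with $v_p(g(n))=1$ have $w_p=-1$ with density $\asymp1/p$, against $+1$ elsewhere. For generic $\cc$, Chebotarev (or simply $\cc$ not in the exceptional set) guarantees such a small prime. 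The catch is that $|\kappa_\cc|<1$ is only qualitative, which is all the statement asks.

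\textbf{Main obstacle.} The hardest step is the quantitative mean value of the rough multiplicative-sign part in arithmetic progressions to large moduli, uniformly enough to feed BST. This is the step where a nonzero mean, unlike the Liouville case, must be tracked precisely, and I would first try a Hal\'asz-type theorem in progressions with the BST smooth/rough decomposition.
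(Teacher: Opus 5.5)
There is a genuine gap, and it lies in how you treat primes $p>z$ with $p\,\|\,g_\cc(n)$ (more generally, odd valuation).

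First, the local signs are misidentified. For $y^2=x^3+a$ and $p\,\|\,a$ the local root number is $\left(\frac{-1}{p}\right)$ (here $e=6$). For $y^2=x^3+ax$ it is $\left(\frac{-2}{p}\right)$ ($e=4$). The symbols $\left(\frac{-3}{p}\right)$ and $\left(\frac{-1}{p}\right)$ occur only at $v_p\equiv2,4\pmod6$ and $v_p\equiv2\pmod4$, respectively.

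More seriously, you treat the product of these signs over the $z$-rough part as a genuinely oscillating multiplicative function. You then plan to get its means in progressions $u\bmod q$, $q\le x^d$, from a Hal\'asz or Selberg--Delange theorem. No such theorem gives that uniformity. Here $q$ can be about the fourth root of the interval length $H^{1-\varepsilon}$, and for a general multiplicative sign you would need control of all its twists by characters modulo $q$.

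The missing idea is that the product over all $p\ge5$ is a Jacobi symbol. With $a_2$ the odd part of $a$,
$\prod_{p\ge5}\left(\frac{-1}{p}\right)^{v_p(a)}=\left(\frac{-1}{|a_2|}\right)(-1)^{v_3(a)}$ and $\prod_{p\ge5}\left(\frac{-2}{p}\right)^{v_p(a)}=\left(\frac{-2}{|a_2|}\right)$.
The rough part differs from this only by the small-prime part, which is already periodic. So the whole thing is periodic and is absorbed into the $2$-adic factor; this is exactly what \eqref{eq:rootnumber-factor} and Proposition~\ref{prop:rootnumber-factor-1728} do. What remains from $p\ge5$ is supported on squarefull numbers. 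Its progression means are elementary divisor sums (Lemma~\ref{lem:squarefull-kernel}), and no Hal\'asz-type input is needed.

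Your first-moment removal of $p^2\mid g_\cc(n)$ with $p>z$ is sound. Since $g_\cc(n)$ is linear in $c_0$, it is even elementary, with no appeal to averaged squarefree-value theorems. It could replace the paper's second-moment transference, which is only ever applied to $\rw-\rw(\cdot\,;\eta)$, a function supported on integers divisible by $p^2$ for some $p>\eta$.

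Because you miss the reciprocity, your description of $\kappa_\cc$ as a product of local expectations of the local root numbers is wrong, not merely imprecise. At a prime $p\equiv3\pmod4$ not dividing $\Delta(g_\cc)$, the expected $j=0$ local sign is $1-2\nu_\cc(p)/p+O(p^{-2})$. By Chebotarev, $\sum_{p\equiv3\pmod 4}\nu_\cc(p)/p$ diverges for generic $\cc$, so that product is $0$.

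The odd-valuation signs are in fact globally tied together. If $c_0\equiv1$ and $c_1\equiv\cdots\equiv c_d\equiv0\pmod{36}$, then $g_\cc(n)\equiv1\pmod{36}$ and $\rw(g_\cc(n))=a(g_\cc(n))$ at every positive value. So for typical such $\cc$ with $c_d>0$ the mean is $\prod_{p\ge5}\kappa_p(\cc)$, which is generically nonzero. The correct constant is $\sign(c_d)\kappa_{2,3}(\cc)\prod_{p\ge5}\kappa_p(\cc)$. The Jacobi symbol sits inside the $2,3$-adic cell density $\kappa_{2,3}(\cc)$, and $\kappa_p(\cc)$ depends only on $v_p\ge2$.

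For the same reason, strict inequality cannot come from $v_p(g_\cc(n))=1$. For $j=0$ the paper uses valuations $2$ and $3$ at a prime $p\equiv2\pmod3$ with a simple root, where $a(p^2)=-1$ and $a(p^3)=1$ both carry positive weight (Lemma~\ref{lem:kappa-strict}). For $j=1728$ it uses valuations $2$ and $4$ at $p\equiv3\pmod4$.

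Your Chebotarev choice of an inert prime with a simple root is the right one. But it needs $\QQ(\sqrt{-3})$ (respectively $\QQ(i)$) not to lie in the root field of $g_\cc$, and you must count the $\cc$ that violate this. The paper does so by imposing $\Gal(L_\cc/\QQ)=S_d$ and counting discriminants of the form $Dm^2$.
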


The constants in Theorem~\ref{thm:main} are given by explicit local factors: $2$- and $3$-adic factors and a convergent Euler product; see \eqref{eq:kappa-c-product} and \eqref{eq:kappa-c-product-star}, respectively.
Since $\#S(H)\asymp H^{d+1}$, the exceptional set has density zero in the coefficient box.

\subsection*{Notation}

We use the convention
\begin{equation}\label{eq:starred-sum-convention}
\sumstar_{a\bmod b}
=\sum_{\substack{a\bmod b\\\gcd(a,b)=1}}.
\end{equation}
When evaluating a local sign on such a class, we use its least positive representative, since the Jacobi symbols in the local factors have positive denominators.
The $p$-adic valuation is denoted by $v_p$, and we set $v_p(0)=+\infty$.
Unless otherwise stated, implied constants may depend on the degree \(d\) and on $\varepsilon$.

\subsection*{Acknowledgements}

The author thanks Tim Browning for suggesting this problem and for his guidance, Stephanie Chan for helpful discussions, and Julie Desjardins, Cec\'{\i}lia Salgado, Ronald van~Luijk, and Anthony V\'arilly-Alvarado for carefully reading the manuscript and offering valuable suggestions.

\section{Root number formulas}\label{sec:local}

\subsection{The \texorpdfstring{$j=0$}{j=0} formula}

For $n\in\ZZ\setminus\{0\}$, let $\rw(n)\in\{\pm1\}$ denote the root number of the elliptic curve $y^2=x^3+n$, and set $\rw(0)=0$ by convention.
The displayed equation need not be minimal; throughout, $\rw(n)$ denotes the global root number of the elliptic curve over $\QQ$ defined by this equation.
Write
\begin{equation}\label{eq:def-units-j0}
n=2^{v_2(n)}n_2=3^{v_3(n)}n_3,
\quad 2\nmid n_2,\quad 3\nmid n_3,
\end{equation}
and set
\begin{equation}\label{eq:def-omega23}
\begin{aligned}
\omega_2(n)&=\rw_2(n)\left(\frac{-1}{|n_2|}\right), \\
\omega_3(n)&=\rw_3(n)(-1)^{v_3(n)},
\end{aligned}
\end{equation}
where $\bigl(\frac{\cdot}{\cdot}\bigr)$ is the Jacobi symbol and $\rw_p(n)\in\{\pm1\}$ denotes the local root number at $p\in\{2,3\}$.
The explicit formulas \cite[Lemma~4.1]{VA} are:
\begin{align*}
\rw_{2}(n)&=
\begin{cases}
-1 & \text{if } v_{2}(n)\equiv 0 \text{ or } 2 \pmod 6,\\
-1 & \text{if } v_{2}(n)\equiv 1,3,4 \text{ or } 5 \pmod 6 \text{ and } n_{2}\equiv 3 \pmod 4,\\
+1 & \text{otherwise},
\end{cases}\\
\rw_{3}(n)&=
\begin{cases}
-1 & \text{if } v_{3}(n)\equiv 1 \text{ or } 2 \pmod 6 \text{ and } n_{3}\equiv 1 \pmod 3,\\
-1 & \text{if } v_{3}(n)\equiv 4 \text{ or } 5 \pmod 6 \text{ and } n_{3}\equiv 2 \pmod 3,\\
-1 & \text{if } v_{3}(n)\equiv 0 \pmod 6 \text{ and } n_{3}\equiv 5 \text{ or } 7 \pmod 9,\\
-1 & \text{if } v_{3}(n)\equiv 3 \pmod 6 \text{ and } n_{3}\equiv 2 \text{ or } 4 \pmod 9,\\
+1 & \text{otherwise}.
\end{cases}
\end{align*}
For primes $p\ge 5$, define
\begin{equation}\label{eq:def-omega-p}
\omega_p(n)=
\begin{cases}
1 & \text{if } v_p(n)\equiv 0,1,3,5 \pmod 6,\\[0.2em]
\left(\dfrac{-3}{p}\right) & \text{if } v_p(n)\equiv 2,4 \pmod 6.
\end{cases}
\end{equation}
V\'arilly-Alvarado \cite[Proposition~4.4]{VA} then shows that
\begin{equation}\label{eq:rootnumber-factor}
\rw(n)
=-\,\omega_2(n)\,\omega_3(n) \prod_{\substack{p^2\mid n\\ p\ge 5}} \omega_p(n).
\end{equation}
In particular, the contribution from primes $p\ge 5$ is multiplicative, but $\rw(n)$ itself is not.

\subsection{The \texorpdfstring{$j=1728$}{j=1728} formula}

Two corrections to \cite[Lemma~4.7 and Proposition~4.8]{VA} are required; they are described in Remark~\ref{rem:VA-corrections} after the corrected formulas.
Our derivation uses Rizzo's three local tables \cite[Tables~I--III]{Riz03}; for $p\ge5$ one may equivalently use Rohrlich's local formula \cite{Roh93}.
We use positive denominators in Jacobi symbols throughout.

For $n\in\ZZ\setminus\{0\}$, let $\rw^\star(n)\in\{\pm 1\}$ denote the root number of the elliptic curve
\[
E_n^\star:\quad y^2=x^3+n x,
\]
and set $\rw^\star(0)=0$ by convention.
Again, this equation is not assumed minimal.
Write
\begin{equation}\label{eq:def-odd-part-star}
n=2^{v_2(n)}n_2,\quad 2\nmid n_2.
\end{equation}
\begin{lemma}\label{lem:W23-1728}
Let $n\in\ZZ\setminus\{0\}$, with $n_2$ as in \eqref{eq:def-odd-part-star}.
Then the local root numbers $\rw_2^\star(n),\rw_3^\star(n)\in\{\pm1\}$ of $E_n^\star$ at $2$ and $3$ are given by
\begin{align*}
& \rw_2^\star(n)=
\begin{cases}
-1 & \text{if } v_2(n)\equiv 1 \pmod 4 \text{ and } n_2\equiv 1 \text{ or } 3 \pmod 8,\\
-1 & \text{if } v_2(n)\equiv 3 \pmod 4 \text{ and } n_2\equiv 5 \text{ or } 7 \pmod 8,\\
-1 & \text{if } v_2(n)\equiv 0 \pmod 4 \text{ and } n_2\equiv 1,5,9,11,13,15 \pmod{16},\\
-1 & \text{if } v_2(n)\equiv 2 \pmod 4 \text{ and } n_2\equiv 1,3,5,7,11,15 \pmod{16},\\
+1 & \text{otherwise},
\end{cases}
\\
& \rw_3^\star(n)=
\begin{cases}
-1 & \text{if } v_3(n)\equiv 2 \pmod 4,\\
+1 & \text{otherwise}.
\end{cases}
\end{align*}
\end{lemma}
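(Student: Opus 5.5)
The plan is to read both formulas off Rizzo's local tables \cite[Tables~I--III]{Riz03}, after first putting $E_n^\star$ into a normal form. The twist $n\mapsto nu^4$ gives an isomorphic curve over $\QQ$ via $(x,y)\mapsto(u^2x,u^3y)$, so $\rw_p^\star(n)$ depends only on the class of $n$ in $\QQ_p^\times/\QQ_p^{\times4}$. This explains at once why only $v_p(n)\bmod 4$ enters the statement.

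At $p=3$ we write $n=3^{v}n_3$ with $v\in\{0,1,2,3\}$ after the reduction above. For $v=0$ the discriminant $-64n^3$ is a $3$-adic unit, so the reduction is good and $\rw_3^\star=+1$. For $v\in\{1,2,3\}$ the curve has additive, potentially good reduction. Since $j=1728$ and $3\nmid 4$, the action of inertia on $T_\ell E$ factors through a cyclic group of order $4/\gcd(v,4)$, and this group is tamely ramified at $3$. Rohrlich's tame formula then gives
\[
\rw_3^\star=\left(\frac{-1}{3}\right)=-1 \quad\text{if } e=2,
\qquad
\rw_3^\star=\left(\frac{-2}{3}\right)=+1 \quad\text{if } e=4.
\]
Here $e$ denotes the order of that inertia group, so $e=2$ occurs for $v=2$ and $e=4$ for $v=1,3$. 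This yields $\rw_3^\star(n)=-1$ exactly when $v_3(n)\equiv2\pmod 4$, independently of $n_3$. We will also check this against Rizzo's Table~II.

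At $p=2$ the reduction is wild, and the plan is pure table lookup in Rizzo's Table~III. First fix $v=v_2(n)\bmod4$. Then $n_2$ matters only modulo $\QQ_2^{\times4}\cap\ZZ_2^\times$, which is the subgroup $1+16\ZZ_2$. So $n_2$ matters only modulo $16$, and in particular only through the eight odd classes mod $16$. For each of the $4\times8=32$ cases we locate the corresponding row of Rizzo's table and record its sign. In the cases $v$ odd we then verify that the sign depends only on $n_2\bmod 8$, which collapses those rows to the stated conditions mod $8$.

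There is one translation point to watch. Rizzo parametrises $y^2=x^3-ax$, or uses a slightly different normalisation, so we must substitute $a=-n$ and track how the residue of $n_2$ changes. This sign flip, together with possible non-minimality for $v\ge4$ (which the fourth-power reduction removes), is exactly where \cite[Lemma~4.7]{VA} appears to have gone wrong. The main obstacle is therefore this bookkeeping: making sure the $-1$ classes $\{1,5,9,11,13,15\}$ and $\{1,3,5,7,11,15\}$ mod $16$ come out correctly. As an independent check, we will compare against a direct computation of the global root number for a few small $n$ in each class mod $16$ and each $v\bmod 4$. To do this we combine the already-established $\rw_3^\star$, the places $p\ge5$ via Rohrlich, and $\rw_\infty=-1$, and confirm that the global parity matches known ranks, for instance the congruent-number curves $n=-N^2$.
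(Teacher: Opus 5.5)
\paragraph{The prime $3$.}
Here your route differs from the paper's and is sound. The paper computes $(v_3(c_4),v_3(c_6),v_3(\Delta))=(1+e,\infty,3e)$ from $c_4=-48n$, $c_6=0$, $\Delta=-64n^3$. It then reads the signs $+1,+1,-1,+1$ off the rows $(1,\ge3,0)$, $(2,\ge5,3)$, $(3,\ge6,6)$, $(4,\ge8,9)$ of Rizzo's Table~II. You instead use that $E_n^\star$ acquires good reduction over $\QQ_3^{\mathrm{ur}}(n^{1/4})$, so inertia acts through a tame cyclic group, and you apply the tame potentially-good formula. Two small points need attention:
\begin{itemize}
\item The inertia order is exactly $4/\gcd(v,4)$. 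For odd $v$, over a ramified quadratic extension the normalised discriminant valuation is $6v\not\equiv0\pmod{12}$, so the order cannot drop to $2$.
\item The tame formula you cite must cover $p=3$. The paper uses \cite{Roh93} only for $p\ge5$, though your Table~II cross-check makes this moot.
\end{itemize}
What your version buys is a conceptual reason why $n_3$ plays no role. $\rw_3^\star$ is just the $p\ge5$ formula from the proof of Proposition~\ref{prop:rootnumber-factor-1728} evaluated at $p=3$, where $\bigl(\frac{-2}{3}\bigr)=1$ and $\bigl(\frac{-1}{3}\bigr)=-1$.

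\paragraph{The prime $2$.}
Here you follow the paper's route, and your reduction to $n_2\bmod 16$ via $(\ZZ_2^\times)^4=1+16\ZZ_2$ is correct. But the $2$-adic formula \emph{is} the table lookup, and your proposal stops before doing it. It also misdescribes how the table is entered. Rizzo's tables are not stated for a family $y^2=x^3-ax$. They are indexed by $(v_2(c_4),v_2(c_6),v_2(\Delta))$, reduced modulo $(4,6,12)$, with congruence conditions on the unit parts.

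The missing step is as follows. With $v_2(n)=4k+e$ and $u=n_2$, the triple is $(4+e,\infty,6+3e)$ and the relevant unit is $c_4/2^{4+e}=-3u$. The fourth-power reduction does not finish the normalisation. For $e=2,3$ the triples $(6,\infty,12)$ and $(7,\infty,15)$ must still be reduced to $(2,\infty,0)$ and $(3,\infty,3)$. The rows $(4,\ge7,6)$, $(5,\ge9,9)$, $(2,\ge4,0)$ and $(2,\ge5,0)$, and $(3,\ge6,3)$ then give $+1$ exactly for $u\equiv3,7\pmod{16}$, $u\equiv5,7\pmod 8$, $u\equiv9,13\pmod{16}$, and $u\equiv1,3\pmod 8$, respectively. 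Complementing among the odd classes gives the statement.

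The translation that needs care is the factor $-3$ in $c_4'$, which permutes the odd residue classes nontrivially. For example, at $e=3$ the row condition $-3u\equiv5,7\pmod 8$ becomes $u\equiv1,3\pmod 8$; this is the case $n=8$ singled out in Remark~\ref{rem:VA-corrections}. A sign change $a=-n$ is not the issue.

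Your rank-based check tests only the product of all local factors, and it presupposes parity or database root numbers. It can corroborate the computation but not replace it.
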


\begin{proof}
For the displayed Weierstrass equation we have
\[
c_4=-48n,\quad c_6=0,\quad \Delta=-64n^3.
\]
Write $v_p(n)=4k+e$, where $0\le e<4$, and put $u=p^{-v_p(n)}n$.
Multiplication of $n$ by $p^4$ gives a $\QQ_p$-isomorphic curve via $(x,y)=(p^2X,p^3Y)$, so it is enough to read Rizzo's tables for the parameter $p^e u$.

At $p=3$ the four valuation triples are
\[
\begin{array}{|c|c|c|}
\hline
e &(v_3(c_4),v_3(c_6),v_3(\Delta))&\rw_3^\star(n)\\
\hline
0&(1,\infty,0)&+1\\
\hline
1&(2,\infty,3)&+1\\
\hline
2&(3,\infty,6)&-1\\
\hline
3&(4,\infty,9)&+1.\\
\hline
\end{array}
\]
These are respectively the rows $(1,\ge3,0)$, $(2,\ge5,3)$, $(3,\ge6,6)$, and $(4,\ge8,9)$ of \cite[Table~II]{Riz03}.
This proves the asserted $3$-adic formula.

At $p=2$ we have
\[
(v_2(c_4),v_2(c_6),v_2(\Delta))=(4+e,\infty,6+3e),
\quad
\frac{c_4}{2^{4+e}}=-3u,\quad
\frac{\Delta}{2^{6+3e}}=-u^3.
\]
Using Rizzo's convention of reducing an admissible triple modulo $(4,6,12)$, the relevant rows of \cite[Table~III]{Riz03} and their unit conditions simplify, because $c_6=0$, as follows:
\[
\begin{array}{|c|c|c|}
\hline
e&(v_2(c_4),v_2(c_6),v_2(\Delta))&\rw_2^\star(n)=+1\ \text{iff}\\
\hline
0&(4,\ge7,6)&u\equiv3,7\pmod {16}\\
\hline
1&(5,\ge9,9)&u\equiv5,7\pmod 8\\
\hline
2&(2,\ge4,0),\ (2,\ge5,0)&u\equiv9,13\pmod {16}\\
\hline
3&(3,\ge6,3)&u\equiv1,3\pmod 8.\\
\hline
\end{array}
\]
Taking complements among the odd residue classes gives exactly the four $2$-adic cases in the statement.
In particular, for $e=3$ the normalized unit is $c_4/2^7=-3u$; the row $(3,\ge6,3)$ gives sign $+1$ precisely when $-3u\equiv5$ or $7\pmod8$, equivalently when $u\equiv1$ or $3\pmod8$.
Thus the sign is $-1$ precisely for $u\equiv5$ or $7\pmod8$.
\end{proof}

\begin{definition}\label{def:omega23-1728}
For $n\in\ZZ\setminus\{0\}$ set
\[
\omega_2^\star(n)=\rw_2^\star(n)\left(\frac{-2}{|n_2|}\right),
\quad
\omega_3^\star(n)=\rw_3^\star(n).
\]
For primes $p\ge 5$, define
\begin{equation}\label{eq:def-omega-p-star}
\omega_p^\star(n)=
\begin{cases}
1 & \text{if } v_p(n)\not\equiv 2 \pmod 4,\\[0.2em]
\Big(\dfrac{-1}{p}\Big) & \text{if } v_p(n)\equiv 2 \pmod 4.
\end{cases}
\end{equation}
\end{definition}

\begin{proposition}\label{prop:rootnumber-factor-1728}
For $n\in\ZZ\setminus\{0\}$ we have the factorisation
\begin{equation*}
\rw^\star(n)
=
-\,\omega_2^\star(n)\,\omega_3^\star(n)\,
\prod_{\substack{p^2\mid n\\ p\ge 5}} \omega_p^\star(n).
\end{equation*}
\end{proposition}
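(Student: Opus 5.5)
The plan is to write the global root number as a product of local root numbers,
\[
\rw^\star(n)=-\prod_{p<\infty}\rw_p^\star(n),
\]
where the factor $-1$ is the archimedean root number. The proof then reduces to three tasks: compute $\rw_p^\star(n)$ for every prime $p\ge5$, use Lemma~\ref{lem:W23-1728} at $p=2,3$, and move all the Jacobi-symbol corrections into $\omega_2^\star$ so that the product over $p\ge5$ becomes the displayed product over $p^2\mid n$.

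\emph{Primes $p\ge5$.} Write $v_p(n)=4k+e$. The $\QQ_p$-isomorphism $(x,y)=(p^2X,p^3Y)$ used in the proof of Lemma~\ref{lem:W23-1728} reduces us to $0\le e\le 3$. Since $\Delta=-64n^3$, the case $e=0$ is good reduction, so $\rw_p^\star=1$. For $e\in\{1,2,3\}$ the model is minimal with $v_p(\Delta)=3e$. This is additive, potentially good reduction of Kodaira type III, I$_0^*$, III$^*$ respectively. Rohrlich's formula (or \cite[Table~I]{Riz03}) gives the local root number in terms of $\mathrm{ord}$ of the reduction:
\begin{itemize}
\item $e=1,3$ (the $\mathrm{ord}=4$ cases): $\rw_p^\star(n)=\bigl(\frac{-2}{p}\bigr)$;
\item $e=2$ (the $\mathrm{ord}=2$ case): $\rw_p^\star(n)=\bigl(\frac{-1}{p}\bigr)$.
\end{itemize}
Note that $e=2$ is exactly the condition $v_p(n)\equiv 2\pmod 4$ in \eqref{eq:def-omega-p-star}.

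\emph{Key identity.} The next step is to verify
\[
\rw_p^\star(n)=\omega_p^\star(n)\left(\frac{-2}{p}\right)^{v_p(n)}\quad(p\ge5).
\]
Check it case by case:
\begin{itemize}
\item $e$ odd: $v_p(n)$ is odd, and $\omega_p^\star=1$.
\item $e=0$: $v_p(n)$ is even, and both sides equal $1$.
\item $e=2$: $v_p(n)$ is even, and $\omega_p^\star(n)=\bigl(\frac{-1}{p}\bigr)=\rw_p^\star(n)$.
\end{itemize}

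\emph{Assembling the product.} Next, take the product over all $p\ge5$. Since $3\nmid 8$, we have $\bigl(\frac{-2}{3}\bigr)=1$, so
\[
\prod_{p\ge5}\left(\frac{-2}{p}\right)^{v_p(n)}=\left(\frac{-2}{|n_2|}\right).
\]
Note also that $\omega_p^\star(n)=1$ unless $p^2\mid n$, because $v_p(n)\equiv2\pmod4$ forces $p^2\mid n$. Hence
\[
\rw^\star(n)=-\,\rw_2^\star(n)\rw_3^\star(n)\left(\frac{-2}{|n_2|}\right)\prod_{p^2\mid n,\,p\ge5}\omega_p^\star(n),
\]
and this is the claim by Definition~\ref{def:omega23-1728}.

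\emph{Main obstacle.} The delicate step is the correct local root number values for types III and III$^*$ at $p\ge5$. These must be exactly $\bigl(\frac{-2}{p}\bigr)$, not $\bigl(\frac{-1}{p}\bigr)$; this is presumably where the correction to \cite{VA} enters. I would double-check them against Rizzo's Table~I with $v_p(c_4)=1+e$, $v_p(c_6)=\infty$, $v_p(\Delta)=3e$. As a sanity check, apply the formula to $n=1$ and $n=-1$, where the root numbers are known to be $+1$ (rank $0$); this tests the sign at $2$ together with the Jacobi twist.
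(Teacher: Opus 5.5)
Your proposal is correct and follows the paper's own proof. The paper likewise takes $\rw_p^\star(n)$ for $p\ge5$ from Rizzo's Table~I (or Rohrlich) to be $1$, $\bigl(\frac{-2}{p}\bigr)$, or $\bigl(\frac{-1}{p}\bigr)$ according as $v_p(n)$ is $\equiv0\pmod4$, odd, or $\equiv2\pmod4$; it then absorbs $\prod_{p\ge5}\bigl(\frac{-2}{p}\bigr)^{v_p(n)}=\bigl(\frac{-2}{|n_2|}\bigr)$ into $\omega_2^\star$ and multiplies by the factors at $2$, $3$ and $\infty$.

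One small fix: $\bigl(\frac{-2}{3}\bigr)=1$ holds because $-2\equiv1\pmod 3$ (equivalently $3\equiv3\pmod 8$), not because $3\nmid8$.
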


\begin{proof}
For $p\ge5$, \cite[Table~I]{Riz03} (equivalently, Rohrlich's local formula \cite{Roh93}) shows that the local root number depends only on $v_p(n)\pmod4$.
More precisely,
\[
\rw_p^\star(n)=
\begin{cases}
1,&v_p(n)\equiv0\pmod4,\\
\bigl(\frac{-2}{p}\bigr),&v_p(n)\equiv1\text{ or }3\pmod4,\\
\bigl(\frac{-1}{p}\bigr),&v_p(n)\equiv2\pmod4.
\end{cases}
\]
Moreover $\bigl(\frac{-2}{3}\bigr)=1$, and hence
\[
\left(\frac{-2}{|n_2|}\right)
=\prod_{p\ge5}\left(\frac{-2}{p}\right)^{v_p(n)}.
\]
Combining the last two displays shows that the contribution of all primes $p\ge5$ is the Jacobi symbol absorbed into $\omega_2^\star(n)$, together with the extra factor $\bigl(\frac{-1}{p}\bigr)$ precisely when $v_p(n)\equiv2\pmod4$.
Multiplication by the local factors at $2$ and $3$, and by the archimedean factor $-1$, proves the proposition.
\end{proof}

\begin{remark}\label{rem:VA-corrections}
The formulas above differ from the corresponding statements in \cite{VA} in two places.
First, Rizzo's Table~III, row $(3,\ge6,3)$, gives $\rw_2^\star(8)=+1$, rather than the value stated in \cite[Lemma~4.7]{VA}.
Second, the formula in \cite[Proposition~4.8]{VA} omits the factor $\bigl(\frac{2}{p}\bigr)$ when $p\ge5$ and $v_p(n)\equiv1\pmod4$; for example, direct multiplication of the local factors gives $\rw^\star(5)=-1$, but the displayed formula there gives $+1$.
\end{remark}

\section{Root numbers in arithmetic progressions}\label{sec:average}

\subsection{The contribution from primes $p \ge 5$}

We begin with arguments, common to both root number families, based on divisor sums and on partitioning the integers according to their $2$- and $3$-adic valuations and unit residue classes.
For a bounded interval $I\subset\RR$, write
\begin{equation}\label{eq:interval-notation}
I_+=I\cap(0,\infty),\quad I_-=I\cap(-\infty,0],
\quad
\langle I\rangle=1+\sup\{|t|:t\in I\}.
\end{equation}
For a positive integer $d$, define
\begin{equation}\label{eq:def-Pplus}
P^+(1)=1,\quad P^+(d)=\max\{p:p\mid d\}\quad\text{for }d>1.
\end{equation}

\begin{lemma}\label{lem:squarefull-kernel}
Let $A:\NN\to\{\pm1\}$ be multiplicative, with $A(p)=1$ for every prime $p\ge5$ and $A(p^r)=1$ for $p\in\{2,3\}$ and $r\ge1$.
For $z\in[5,\infty]$ and $n\in\NN$ define
\[
A(n;z)=\sum_{\substack{d\mid n\\P^+(d)\le z}}(A*\mu)(d),\quad
C_A(u,q;z)=\sum_{\substack{d\ge1,\ P^+(d)\le z\\\gcd(d,q)\mid u}}
(A*\mu)(d)\frac{\gcd(d,q)}d.
\]
Extend $A(\,\cdot\,;z)$ evenly to nonzero integers and put $A(0;z)=0$.
Then
\begin{enumerate}
\item $A*\mu$ is multiplicative, supported on squarefull integers coprime to $6$, and $|(A*\mu)(p^r)|\le2$;
\item for every $\varepsilon>0$,
\[
\sum_{\substack{1\le n\le N\\n\equiv u\pmod q}}A(n;z)
=C_A(u,q;z)\frac Nq+O(N^{1/2+\varepsilon}),
\]
uniformly in $N,q,u,z$, and $|C_A(u,q;z)|\le1$;
\item if $q\mid Q$, every prime dividing $Q/q$ belongs to $\{2,3\}$, and $v\equiv u\pmod q$, then $C_A(v,Q;z)=C_A(u,q;z)$.
\end{enumerate}
\end{lemma}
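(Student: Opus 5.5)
Since $A$ is multiplicative, Möbius inversion makes $A*\mu$ multiplicative, with $(A*\mu)(p^r)=A(p^r)-A(p^{r-1})$ for $r\ge1$.
\begin{enumerate}
\item For $p\in\{2,3\}$ we have $A(p^r)=1$ for all $r\ge0$, so $(A*\mu)(p^r)=0$ for $r\ge1$.
For $p\ge5$, the value at $r=1$ is $A(p)-1=0$.
Every value of $A$ lies in $\{\pm1\}$, so $|(A*\mu)(p^r)|\le2$.
This proves (1): the function $A*\mu$ is supported on squarefull integers coprime to $6$ and is bounded by $2^{\omega(d)}$.
\end{enumerate}

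For (2) I expand $A(n;z)=\sum_{d\mid n,\,P^+(d)\le z}(A*\mu)(d)$ and swap the order of summation. For a fixed $d$, the conditions $d\mid n$, $n\equiv u\pmod q$ and $1\le n\le N$ cut out either the empty set or one residue class modulo $\mathrm{lcm}(d,q)=dq/\gcd(d,q)$; the class is nonempty exactly when $\gcd(d,q)\mid u$. Hence the count equals $N\gcd(d,q)/(dq)+O(1)$, and every term vanishes unless $d\le N$. Summing over $d\le N$ gives the main term $\frac Nq\sum_{d\le N}(\cdots)$. The error term is
\[
\sum_{d\le N,\ d\ \mathrm{squarefull}}2^{\omega(d)}\ll N^{1/2+\varepsilon},
\]
using that there are $\ll X^{1/2}$ squarefull integers up to $X$ and the divisor bound. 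To complete the main term to all $d$, I bound the tail by
\[
\frac Nq\sum_{d>N}2^{\omega(d)}\gcd(d,q)/d .
\]
This is the step needing care, since $\gcd(d,q)$ can be large and uniformity in $q$ is required. I would bound $\gcd(d,q)\le q$ only after partial summation, giving $\frac Nq\cdot q\,N^{-1/2+\varepsilon}=N^{1/2+\varepsilon}$. Partial summation applies because squarefull $d>N$ satisfy $\sum_{d>N}2^{\omega(d)}/d\ll N^{-1/2+\varepsilon}$. This bound is uniform and independent of $z$, and $z=\infty$ is handled in the same way. Absolute convergence of $C_A$ follows similarly: for fixed $q$, the sum is at most $\sum_d 2^{\omega(d)}q/d<\infty$ over squarefull $d$.

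For the bound $|C_A|\le1$, I would evaluate $C_A$ as an Euler product. The condition $\gcd(d,q)\mid u$ factors prime by prime as $\min(v_p(d),v_p(q))\le v_p(u)$, and the condition $P^+(d)\le z$ restricts to $p\le z$. So $C_A=\prod_{5\le p\le z}L_p$ with
\[
L_p=\sum_{r\ge0,\ \min(r,v_p(q))\le v_p(u)}(A(p^r)-A(p^{r-1}))\,p^{\min(r,v_p(q))-r}.
\]
Write $k=v_p(q)$ and $s=v_p(u)$.
\begin{itemize}
\item If $s\ge k$, every $r$ is allowed. By Abel summation $L_p=(1-p^{-1})\sum_{r\ge0}A(p^r)\,w_r$, where the weights satisfy $w_r\ge0$ and $(1-p^{-1})\sum_r w_r=1$, because $\sum_r(\text{difference})=$ telescoping. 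Hence $L_p$ is an average of values $\pm1$, so $|L_p|\le1$.
\item If $s<k$, only $r\le s$ is allowed, and the weight $p^{0}$ makes the sum telescope to $A(p^s)=\pm1$. Again $|L_p|\le1$.
\end{itemize}
In fact $L_p$ is the conditional average of $A(p^{v_p(n)})$ over $n\equiv u\pmod{p^k}$, which is the cleaner way to phrase both cases.

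Part (3) is now immediate from the Euler product. Each local factor $L_p$ depends only on $v_p(q)$ and $v_p(u)$ truncated at $v_p(q)$, and this data depends only on $u$ mod $p^{v_p(q)}$, for primes $p\ge5$ only. If $Q/q$ is supported on $\{2,3\}$, then $v_p(Q)=v_p(q)$ for $p\ge5$. Also $v\equiv u\pmod q$ gives $\min(v_p(v),v_p(q))=\min(v_p(u),v_p(q))$. The primes $2$ and $3$ do not enter, since $A*\mu$ vanishes on $d$ divisible by them. Therefore $C_A(v,Q;z)=C_A(u,q;z)$.
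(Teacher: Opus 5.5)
Your proof is correct. Part (1) and the progression asymptotic in (2) follow the paper's argument. This includes the tail step, which needs no special care: $\gcd(d,q)/q\le1$ termwise, which is what you end up using.

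You diverge from the paper in the bound $|C_A(u,q;z)|\le1$ and in part (3).

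\textbf{The paper's route.} It notes that $A(n;z)=\prod_{p\mid n,\,p\le z}A(p^{v_p(n)})\in\{\pm1\}$. It then reads $C_A(u,q;z)$ off the asymptotic just proved as the limiting mean of a $\pm1$-valued function on the progression, so $|C_A|\le1$ follows at once. For (3), it observes that every $d$ in the support of $A*\mu$ is coprime to $Q/q$. Hence $\gcd(d,Q)=\gcd(d,q)$, and the two series agree term by term.

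\textbf{Your route.} You factor $C_A$ as an Euler product over $5\le p\le z$ and compute each local factor explicitly. Writing $k=v_p(q)$ and $s=v_p(u)$, the factor is $A(p^{s})$ when $s<k$, and $(1-p^{-1})\sum_{r\ge k}A(p^r)p^{k-r}$ otherwise. In both cases this is the local conditional mean of $A(p^{v_p(n)})$ over $n\equiv u\pmod{p^k}$.

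\textbf{Comparison.} The paper's argument is shorter and simply reuses (2). Yours costs a small local computation but gives more:
\begin{itemize}
\item the stronger fact that every local factor individually has absolute value at most $1$;
\item a transparent view that $C_A$ depends on $(u,q)$ only through $\min(v_p(u),v_p(q))$ for $p\ge5$, which is exactly the content of (3);
\item the Euler product for $C_A$ directly, which the paper proves separately in Lemma~\ref{lem:c-local} and then invokes in Lemma~\ref{lem:squarefull-tail}.
\end{itemize}
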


\begin{proof}
Put $h_A=A*\mu$.
The identity $h_A(p^r)=A(p^r)-A(p^{r-1})$ proves the first assertion.
For the second, expand $A(n;z)$ and count the solutions of $dk\equiv u\pmod q$.
If $g=\gcd(d,q)$, this congruence is soluble precisely when $g\mid u$, and then the admissible $k$ occupy one class modulo $q/g$.
Hence
\[
\sum_{\substack{n\le N\\n\equiv u\pmod q}}A(n;z)
=\frac Nq\sum_{\substack{d\le N,\ P^+(d)\le z\\\gcd(d,q)\mid u}}
h_A(d)\frac{\gcd(d,q)}d
+O\left(\sum_{d\le N}|h_A(d)|\right).
\]
The squarefull support and the bound $|h_A(p^r)|\le2$ make the error $O(N^{1/2+\varepsilon})$.
Extending the main sum to $d<\infty$ has the same cost, since
\[
N\sum_{d>N}\frac{|h_A(d)|}{d}
\ll N^{1/2+\varepsilon}.
\]
This proves the asserted progression formula.
Multiplicativity and the convolution identity also give
\[
A(n;z)=\prod_{\substack{p\mid n\\p\le z}}A\bigl(p^{v_p(n)}\bigr)\in\{\pm1\}.
\]
The same divisor decomposition identifies $C_A(u,q;z)$ as the limiting mean of $A(\,\cdot\,;z)$ on the progression, whence $|C_A(u,q;z)|\le1$.
For (3), every $d$ with $(A*\mu)(d)\ne0$ is coprime to $Q/q$.
Thus $\gcd(d,Q)=\gcd(d,q)$, and the two divisibility conditions agree because $v\equiv u\pmod q$.
\end{proof}

\begin{proposition}\label{prop:cell-progressions}
Let $A:\NN\to\{\pm1\}$ be multiplicative, with $A(p)=1$ for every prime $p\ge5$ and $A(p^r)=1$ for $p\in\{2,3\}$ and $r\ge1$.
Fix positive integers $b_2,b_3$, set $T=2^{b_2}3^{b_3}$ and $M_{r,s}=2^{r+b_2}3^{s+b_3}$, and choose signs $\sigma_{r,s,t}$ for $r,s\ge0$ and $t\in(\ZZ/T\ZZ)^\times$.
For $n\ne0$, write
\[
r=v_2(n),\quad s=v_3(n),\quad
t\equiv \frac{n}{2^r3^s}\pmod T,
\]
and define
\[
w(n;z)=\sign(n)\sigma_{r,s,t}A(|n|;z),\quad w(0;z)=0.
\]
For $q\ge1$ and $u\pmod q$, put
\[
Q_{r,s}(q)=\operatorname{lcm}(q,M_{r,s})
\]
and
\[
\lambda_{r,s,t}(u,q)=
\begin{cases}
q/Q_{r,s}(q),&u\equiv2^r3^s t\pmod{\gcd(q,M_{r,s})},\\
0,&\text{otherwise},
\end{cases}
\]
and
\[
\rho_w(u,q)=\sum_{r,s\ge0}\ \sumstar_{t\bmod T}
\sigma_{r,s,t}\lambda_{r,s,t}(u,q).
\]
Then, for $z\in[5,\infty]$, every bounded interval $I$, and every $\varepsilon>0$,
\[
\sum_{\substack{n\in I\\n\equiv u\pmod q}}w(n;z)
=\rho_w(u,q)C_A(u,q;z)\frac{|I_+|-|I_-|}{q}
+O_{b_2,b_3}(\langle I\rangle^{1/2+\varepsilon}),
\]
uniformly in $q,u,I,z$.
Moreover $|\rho_w(u,q)|\le1$, and $\rho_w(u,q)$ depends on $u$ only modulo $2^{v_2(q)}3^{v_3(q)}$.
\end{proposition}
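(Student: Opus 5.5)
We will prove the proposition by partitioning the progression $n\equiv u\pmod q$ into cells according to the triple $(r,s,t)$, and then applying Lemma~\ref{lem:squarefull-kernel}(2) on each cell. First split $I$ into $I_+$ and $I_-$. On $I_-$ we replace $n$ by $-n$. This changes $\sign(n)$ and $u$, and it changes the unit class $t$ to $-t$. Since $A(|n|;z)$ is even, it suffices to treat positive $n$ and then recombine, which produces the factor $|I_+|-|I_-|$.

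For a positive $n$ lying in the cell $(r,s,t)$, the conditions $v_2(n)=r$, $v_3(n)=s$ and $n/(2^r3^s)\equiv t\pmod T$ together amount to the single congruence $n\equiv 2^r3^s t\pmod{M_{r,s}}$. The point is that $t$ is a unit modulo $T$ and $b_2,b_3\ge1$, so this congruence already pins down the exact valuations. Combined with $n\equiv u\pmod q$, the Chinese remainder theorem shows that the system is soluble if and only if $u\equiv 2^r3^st\pmod{\gcd(q,M_{r,s})}$. When it is soluble, it defines a single class $u'$ modulo $Q_{r,s}(q)$, and its density is $1/Q_{r,s}=\lambda_{r,s,t}(u,q)/q$.

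On each cell we apply Lemma~\ref{lem:squarefull-kernel}(2) with modulus $Q_{r,s}$. Since $Q_{r,s}/q$ has only the prime factors $2$ and $3$, Lemma~\ref{lem:squarefull-kernel}(3) gives $C_A(u',Q_{r,s};z)=C_A(u,q;z)$. This is exactly what lets the constant $C_A(u,q;z)$ factor out of the sum over cells, leaving $\rho_w(u,q)\,C_A(u,q;z)/q$ times the length of the interval.

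The main obstacle is that there are infinitely many cells. Each cell contributes an error $O(N^{1/2+\varepsilon})$, so we cannot simply add the errors over all of them. We will deal with this by truncating at $2^r3^s\le N^{1/2}$. The cells with $2^r3^s>N^{1/2}$ are handled trivially: the number of $n\le N$ with $2^r3^s\mid n$ is at most $N/(2^r3^s)$, and summing this over such pairs $(r,s)$ gives $O(N^{1/2}\log N)$. The same argument shows that the tail of the main term, i.e.\ the part of $\sum\lambda/q$ coming from these cells, is $\le\sum N/M_{r,s}\ll N^{1/2}$. The remaining cells number $O_{b_2,b_3}((\log N)^2)$, so their errors add up to $O(N^{1/2+\varepsilon})$ after adjusting $\varepsilon$.

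For the bound $|\rho_w|\le1$, note that the cells partition the residue class. Hence $\sum_{r,s,t}\lambda_{r,s,t}(u,q)=1$, namely the total density of the progression relative to $1/q$, and since every $|\sigma_{r,s,t}|=1$ the bound follows. Finally, $\gcd(q,M_{r,s})$ divides $2^{v_2(q)}3^{v_3(q)}$, so each indicator defining $\lambda_{r,s,t}(u,q)$ depends only on $u$ modulo $2^{v_2(q)}3^{v_3(q)}$. Therefore $\rho_w(u,q)$ does as well.
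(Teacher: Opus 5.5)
Your proposal is correct and follows essentially the same route as the paper. Both arguments partition into the cells $n\equiv 2^r3^st\pmod{M_{r,s}}$ and combine each cell with $n\equiv u\pmod q$ by the Chinese remainder theorem. Both then apply Lemma~\ref{lem:squarefull-kernel}(2) on each class modulo $Q_{r,s}(q)$, use Lemma~\ref{lem:squarefull-kernel}(3) to pull out $C_A(u,q;z)$, absorb the $O((\log N)^2)$ cells into $N^{\varepsilon}$, and obtain the negative range via $n\mapsto -n$ (using $\lambda_{r,s,-t}(-u,q)=\lambda_{r,s,t}(u,q)$ and $C_A(-u,q;z)=C_A(u,q;z)$).

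The differences are only bookkeeping. You cut at $2^r3^s\le N^{1/2}$ and bound both the discarded terms and the main-term tail trivially. That tail is $O(N^{1/2}\log N)$ rather than the $O(N^{1/2})$ you wrote, which is harmless. The paper instead keeps every cell with $2^r,3^s\le N$ and completes the main term at cost $O(1)$. For a general interval $I$ you also need the routine step of differencing initial segments.
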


\begin{proof}
First take $I=(0,N]$ with $N\ge1$, and partition its integers by $(r,s,t)$.
When compatible, the congruences $n\equiv u\pmod q$ and $n\equiv2^r3^s t\pmod{M_{r,s}}$ determine one class modulo $Q_{r,s}(q)$.
Apply part~(2) of Lemma~\ref{lem:squarefull-kernel} with $\varepsilon/2$ in place of $\varepsilon$ to estimate the sum of $A(n;z)$ on this class.
Part~(3) then replaces its factor $C_A(v,Q_{r,s}(q);z)$ by $C_A(u,q;z)$.
Only $r,s\ll\log N$ occur up to $N$, so $(\log N)^2\ll N^{\varepsilon/2}$ absorbs the number of cells and the accumulated error is $O(N^{1/2+\varepsilon})$.
The omitted relative density after completing the main term to all valuation cells is
\[
O\bigl(\min\{1,2^{v_2(q)}/N\}+
\min\{1,3^{v_3(q)}/N\}\bigr).
\]
After multiplication by $N/q$, this contributes $O(1)$, since $2^{v_2(q)},3^{v_3(q)}\le q$.
This proves the positive initial segment formula.
The definition of $w$ also gives the negative initial segment formula with the opposite sign.
To see this explicitly, substitute $n=-m$.
A cell indexed by $(u,t)$ becomes the cell indexed by $(-u,-t)$, and
\[
C_A(-u,q;z)=C_A(u,q;z),\quad
\lambda_{r,s,-t}(-u,q)=\lambda_{r,s,t}(u,q),
\]
but the factor $\sign(n)$ contributes $-1$.
Taking differences of the positive and negative initial segment formulas at the endpoints of $I_+$ and $I_-$ proves the assertion for a general bounded interval $I$.

The $\lambda_{r,s,t}(u,q)$ are the nonnegative relative densities of disjoint valuation cells inside $u\pmod q$ and sum to $1$.
Hence $|\rho_w(u,q)|\le1$.
Their compatibility conditions involve $u$ only modulo $2^{v_2(q)}3^{v_3(q)}$, proving the final assertion.
\end{proof}

We now apply the argument to the $j=0$ family.

The next three quantities isolate the large prime contribution to the root number.
The function $a$ packages the local factors at primes $p\ge 5$, its M\"obius transform $h$ is the convenient squarefull kernel for divisor sums, and $c(u,q)$ is the mean value of $a$ on the progression $u\pmod q$.
The complementary factor $\rho(u,q)$, introduced in the next subsection, records the contribution from the primes $2$ and $3$.

For $n\in\ZZ\setminus\{0\}$ define the even function
\begin{equation}\label{eq:def-a}
a(n)=\prod_{\substack{p^2\mid n\\ p\ge 5}}\omega_p(n).
\end{equation}
Its restriction to $\NN$ is multiplicative.
Write
\begin{equation}\label{eq:a-conv}
a=1*h, \quad \text{ equivalently } h=a*\mu.
\end{equation}
Then $h$ is multiplicative, $h(p)=0$ for every prime $p$, and
\begin{equation}\label{eq:h-prime-powers}
h(p^r)=a(p^r)-a(p^{r-1}).
\end{equation}
Since \eqref{eq:def-a} gives $a(p^r)=1$ for all $r\ge 1$ when $p\in\{2,3\}$, equation \eqref{eq:h-prime-powers} yields $h(p^r)=0$ for all $r\ge 1$, and multiplicativity gives
\begin{equation}\label{eq:h-coprime-6}
h(d)=0, \quad \text{if }\gcd(d,6)>1.
\end{equation}
In particular, $h$ is supported on squarefull integers coprime to $6$.
For $q\ge 1$ and a residue class $u\pmod{q}$, set
\begin{equation}\label{eq:def-c}
c(u,q)=\sum_{\substack{d\ge 1\\ \gcd(d,q)\mid u}}h(d)\,\frac{\gcd(d,q)}{d}.
\end{equation}

\begin{lemma}\label{lem:c-local}
Let $q\ge 1$ and $u\pmod q$.
With $h$ as in \eqref{eq:a-conv} and $c(u,q)$ as in \eqref{eq:def-c}, we have the Euler product
\[
c(u,q)=\prod_{p}c_p\bigl(u\bmod p^{v_p(q)},p^{v_p(q)}\bigr),
\]
where, writing $k=v_p(q)$ and $t=v_p(u)$,
\begin{equation}\label{eq:cp-def}
c_p(u,p^k)=\sum_{\substack{r\ge 0\\ \min(r,k)\le t}}h(p^r)\,p^{\min(r,k)-r}.
\end{equation}
\end{lemma}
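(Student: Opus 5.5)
We prove the Euler product by factoring the summand in \eqref{eq:def-c} prime by prime. Everything here is multiplicative, so the work reduces to justifying absolute convergence and checking that the conditions split over primes.

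\textbf{Absolute convergence.} By Lemma~\ref{lem:squarefull-kernel}(1), applied with $A=a$, the function $h$ is supported on squarefull integers coprime to $6$ and satisfies $|h(p^r)|\le2$. Since $\gcd(d,q)/d\le1$, the summand is dominated by $|h(d)|$ times a bounded factor. The Euler product of local majorants is
\[
\prod_p\Bigl(1+\sum_{r\ge2}2p^{\min(r,k_p)-r}\Bigr),
\]
where $k_p=v_p(q)$. For the finitely many $p\mid q$ each factor is a finite number, because $p^{\min(r,k)-r}\le1$ and the terms with $r>k$ decay geometrically. For $p\nmid q$ the factor is $1+O(p^{-2})$. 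Hence $\sum_d|h(d)|\gcd(d,q)/d<\infty$, which justifies rearranging the series as an Euler product.

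\textbf{Splitting the condition into local conditions.} Write $d=\prod_p p^{r_p}$. Then $\gcd(d,q)=\prod_p p^{\min(r_p,k_p)}$, and so
\[
\frac{\gcd(d,q)}{d}=\prod_p p^{\min(r_p,k_p)-r_p}.
\]
The condition $\gcd(d,q)\mid u$ holds if and only if $p^{\min(r_p,k_p)}\mid u$ for every $p$. This is equivalent to $\min(r_p,k_p)\le v_p(u)$ for each $p$, with the convention $v_p(0)=\infty$.

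When $u$ is regarded as a class modulo $q$, this condition depends only on $u \bmod p^{k_p}$. Indeed $\min(r_p,k_p)\le k_p$, and divisibility of $u$ by $p^j$ for $j\le k_p$ is well defined modulo $p^{k_p}$. Consequently, in \eqref{eq:cp-def} the quantity $t=v_p(u)$ should be read as $\min(v_p(u),k)$, or as $\infty$ when $u\equiv0$; in either reading the condition is unchanged.

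\textbf{Factorisation.} The summand $h(d)\,\mathbf 1[\gcd(d,q)\mid u]\,\gcd(d,q)/d$ is therefore a multiplicative function of $d$ whose value at $p^{r}$ is
\[
h(p^r)\,\mathbf 1[\min(r,k_p)\le t_p]\;p^{\min(r,k_p)-r}.
\]
By absolute convergence the sum over $d$ equals the product over $p$ of the local sums over $r\ge0$. Each local sum is exactly $c_p(u\bmod p^{k_p},p^{k_p})$ as defined in \eqref{eq:cp-def}.

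For $p\nmid q$ we have $k=0$, the condition is vacuous, and the local factor is $\sum_r h(p^r)p^{-r}$. For $p\in\{2,3\}$ the factor equals $1$, because $h(p^r)=0$ for all $r\ge1$.

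The only step that needs care is the well-definedness of the local condition modulo $p^{k}$ when $u$ is only a residue class. This is handled by the observation above that $\min(r,k)\le k$.
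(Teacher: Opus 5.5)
Your proof is correct and follows essentially the same route as the paper. Both arguments use multiplicativity of the summand (including the indicator of $\gcd(d,q)\mid u$), the local equivalence $p^{\min(r,k)}\mid u\iff\min(r,k)\le t$ (independent of the representative of $u$), and absolute convergence from $h(p)=0$, $|h(p^r)|\le2$, geometric decay for $r>k$, finitely many terms for $r\le k$, and the $1+O(p^{-2})$ bound for $p\nmid q$.

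One point of wording: domination by $|h(d)|$ alone would not give convergence, since $\sum_d|h(d)|$ diverges because $h(p^2)=-2$ for primes $p\ge5$ with $p\equiv2\pmod{3}$. The local majorant you actually display does keep the factor $\gcd(d,q)/d$, so your argument stands as written.
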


If $u$ denotes a residue class, any integer representative may be used: only $\min\{v_p(u),v_p(q)\}$ enters the formula, and this is independent of the representative.

\begin{proof}
The summand in \eqref{eq:def-c}, taken to be zero unless $\gcd(d,q)\mid u$, is multiplicative in $d$.
Hence $c(u,q)$ factors as an Euler product.
For a given prime $p$, write $k=v_p(q)$, so that $\gcd(p^r,q)=p^{\min(r,k)}$, and write $t=v_p(u)$.
Then $p^{\min(r,k)}\mid u$ is equivalent to $\min(r,k)\le t$, which gives \eqref{eq:cp-def}.

Absolute convergence follows because $h(p)=0$ and $|h(p^r)|\ll 1$ for $r\ge 2$, while for $r>k$ the factor $p^{\min(r,k)-r}=p^{k-r}$ decays geometrically, and for $r\le k$ there are only finitely many terms.
Globally, only finitely many primes divide $q$.
For $p\nmid q$, the sum of the absolute values of the non-constant local terms is $O(p^{-2})$.
The Euler product is therefore absolutely convergent.
\end{proof}

\begin{lemma}\label{lem:sum-a}
Let $\varepsilon>0$.
Then
\begin{equation}\label{eq:sum-a}
\sum_{\substack{1\le n\le N\\ n\equiv u\pmod q}} a(n)
=c(u,q) \, \frac{N}{q} +O\big(N^{1/2+\varepsilon}\big),
\end{equation}
uniformly in $N\ge 1$, $q\ge 1$ and $u\pmod q$.
\end{lemma}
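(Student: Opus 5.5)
The plan is to deduce the lemma as a special case of Lemma~\ref{lem:squarefull-kernel}(2) with $z=\infty$. For this, restrict $a$ from \eqref{eq:def-a} to $\NN$ and call this restriction $A$. We must check the hypotheses of Lemma~\ref{lem:squarefull-kernel}. The function $A$ is multiplicative, as noted after \eqref{eq:def-a}, and it takes values in $\{\pm1\}$ because each $\omega_p(n)\in\{\pm1\}$ by \eqref{eq:def-omega-p}. For a prime $p\ge5$ we have $p^2\nmid p$, so the product in \eqref{eq:def-a} is empty and $A(p)=1$. For $p\in\{2,3\}$ and $r\ge1$ the product runs only over primes $\ge5$, so it is again empty and $A(p^r)=1$.

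Next we identify the two sides with the objects in Lemma~\ref{lem:squarefull-kernel}. When $z=\infty$ the condition $P^+(d)\le z$ is vacuous, so
\[
A(n;\infty)=\sum_{d\mid n}(A*\mu)(d)=(A*\mu*1)(n)=A(n)=a(n)
\]
for every $n\in\NN$, by M\"obius inversion. Since $h=a*\mu$ by \eqref{eq:a-conv}, the constant $C_A(u,q;\infty)$ coincides term by term with $c(u,q)$ in \eqref{eq:def-c}. Absolute convergence of this series is already guaranteed by Lemma~\ref{lem:squarefull-kernel}(1): $h$ is supported on squarefull integers and $|h(p^r)|\le2$, so $\sum_d|h(d)|\gcd(d,q)/d\le\sum_d|h(d)|\ll1$ over squarefull $d$. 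Lemma~\ref{lem:squarefull-kernel}(2), which holds uniformly in $N,q,u,z$, then gives \eqref{eq:sum-a} uniformly in $N,q,u$.

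No step here is a real obstacle; the only point needing care is that $z=\infty$ is permitted in Lemma~\ref{lem:squarefull-kernel}, which its statement allows since $z\in[5,\infty]$. If one preferred a self-contained argument, the route would mirror that lemma's proof. Write $a=1*h$ and swap the order of summation. For each $d$, count the $k\le N/d$ with $dk\equiv u\pmod q$: there are $\gcd(d,q)N/(dq)+O(1)$ of them when $\gcd(d,q)\mid u$, and none otherwise. Then bound the total error by $\sum_{d\le N}|h(d)|+N\sum_{d>N}|h(d)|/d\ll N^{1/2+\varepsilon}$, using that squarefull $d\le N$ number $O(N^{1/2})$ and that $|h(d)|\le2^{\omega(d)}\ll d^{\varepsilon}$.
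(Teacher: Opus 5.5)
Your proposal is correct and is exactly the paper's proof: apply Lemma~\ref{lem:squarefull-kernel}(2) with $A=a$ and $z=\infty$, observing that $A(n;\infty)=a(n)$ and $C_A(u,q;\infty)=c(u,q)$. One side remark is wrong, though the argument does not need it: $\sum_d|h(d)|$ over squarefull $d$ diverges (for instance $h(p^2)=-2$ for every prime $p\equiv2\pmod{3}$). Absolute convergence of $c(u,q)$ should instead be justified via $\gcd(d,q)\le q$, i.e.\ $\sum_d|h(d)|\gcd(d,q)/d\le q\sum_d|h(d)|/d<\infty$, or via the Euler product of Lemma~\ref{lem:c-local}.
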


\begin{proof}
Apply Lemma~\ref{lem:squarefull-kernel}(2) with $A=a$ and $z=\infty$.
Then $A(n;\infty)=a(n)$ and $C_A(u,q;\infty)=c(u,q)$.
\end{proof}

\subsection{Contribution of the primes $2$ and $3$}

For $r,s\ge 0$ set
\[
M_{r,s}=2^{r+2}3^{s+2},
\quad
Q_{r,s}(q)=\operatorname{lcm}(q,M_{r,s}).
\]
For $q\ge 1$, $u\pmod q$ and $t\bmod36$ with $\gcd(t,6)=1$ define
\begin{equation*}
\lambda_{r,s,t}(u,q)=
\begin{cases}
\dfrac{q}{Q_{r,s}(q)} & \text{if } u\equiv2^r3^s t\pmod{\gcd(q,M_{r,s})},\\
0 & \text{otherwise}.
\end{cases}
\end{equation*}
We then set
\begin{equation}\label{eq:def-rho}
\rho(u,q)=\sum_{r,s\ge 0}\ \sumstar_{t\bmod36}
\bigl(-\omega_2(2^{r}3^{s}t)\,\omega_3(2^{r}3^{s}t)\bigr)\,
\lambda_{r,s,t}(u,q),
\end{equation}
whose defining series converges absolutely.
Here and in every analogous sum, the local signs are evaluated at the least positive representative of $t$, as stipulated in the introduction.

For $z\in[5,\infty]$ and $n\ne0$, define
\begin{equation}\label{eq:def-ac-trunc}
a(n;z)=\sum_{\substack{d\mid n\\P^+(d)\le z}}h(d),\quad
c(u,q;z)=\sum_{\substack{d\ge1,\ P^+(d)\le z\\\gcd(d,q)\mid u}}
h(d)\frac{\gcd(d,q)}d,
\end{equation}
where the restrictions involving $P^+(d)$ are omitted when $z=\infty$; also set $a(0;z)=0$.
These are the truncations associated with $a$ in Lemma~\ref{lem:squarefull-kernel}.
Thus $c(u,q;\infty)=c(u,q)$, and Lemma~\ref{lem:c-local} gives
\[
c(u,q;z)=\prod_{p\le z}c_p\bigl(u,p^{v_p(q)}\bigr)
\]
when $z<\infty$.
Define
\begin{equation}\label{eq:rw-eta-def}
\rw(n;z)=-\omega_2(n)\omega_3(n)a(n;z)\quad\text{for }n\ne0,
\quad \rw(0;z)=0.
\end{equation}
Then $\rw(n;\infty)=\rw(n)$, and for finite $z$ this is the root number product truncated at $p\le z$.
In particular, for $\eta\ge5$ and $n\ne0$,
\begin{equation}\label{eq:a-eta-conv}
a(n;\eta)
=\prod_{5\le p\le\eta}\left(\sum_{r=0}^{v_p(n)}h(p^r)\right)
=\sum_{\substack{q\mid n\\P^+(q)\le\eta}}h(q).
\end{equation}

\begin{proposition}\label{prop:rw-interval}
Let $q\ge1$, let $u\pmod q$, and let $I\subset\RR$ be a bounded interval.
For $z\in[5,\infty]$ and every $\varepsilon>0$,
\[
\sum_{\substack{n\in I\\n\equiv u\pmod q}}\rw(n;z)
=\rho(u,q)c(u,q;z)\frac{|I_+|-|I_-|}{q}
+O\bigl(\langle I\rangle^{1/2+\varepsilon}\bigr),
\]
uniformly in $q,u,I,z$.
Moreover $|\rho(u,q)|\le1$, and $\rho(u,q)$ depends on $u$ only modulo $2^{v_2(q)}3^{v_3(q)}$.
\end{proposition}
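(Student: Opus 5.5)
The plan is to deduce the statement directly from Proposition~\ref{prop:cell-progressions}, applied with $A=a$, $b_2=b_3=2$ (so that $T=36$ and $M_{r,s}=2^{r+2}3^{s+2}$ agree with the definitions of this subsection), and suitable signs $\sigma_{r,s,t}$.

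First we check the hypotheses on $A$. The restriction of $a$ to $\NN$ is multiplicative with values in $\{\pm1\}$. We have $a(p)=1$ for every prime $p\ge5$, since $\omega_p$ only enters when $p^2\mid n$. Also $a(p^r)=1$ for $p\in\{2,3\}$, because the product in \eqref{eq:def-a} runs over $p\ge5$. With $A=a$ one has $A*\mu=h$, so $A(n;z)=a(n;z)$ and $C_A(u,q;z)=c(u,q;z)$, exactly as in \eqref{eq:def-ac-trunc}.

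Second, and this is the only substantive step, we show that
\[
-\omega_2(n)\omega_3(n)=\sign(n)\,\sigma_{r,s,t}
\]
for a sign $\sigma_{r,s,t}$ depending only on $r=v_2(n)$, $s=v_3(n)$ and $t\equiv n/(2^r3^s)\pmod{36}$. We set $\sigma_{r,s,t}=-\omega_2(2^r3^st)\omega_3(2^r3^st)$, with $t$ its least positive representative. This matches the coefficient in \eqref{eq:def-rho}, so $\rho_w=\rho$.

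To justify this choice, inspect the local formulas. The quantity $\rw_2(n)$ depends on $r$ and on $n_2\bmod4$. Write $n_2=\sign(n)\,3^s|n'|$ with $n'=n/(2^r3^s)$. Then the class of $n_2$ mod $4$ is determined by $\sign(n)$, $s$ and $t\bmod4$, and the sign flip contributes exactly $\bigl(\frac{-1}{\cdot}\bigr)$-type factors. The Jacobi factor $\bigl(\frac{-1}{|n_2|}\bigr)$ depends only on $|n_2|\bmod4$, hence on $s$ and $|t|\bmod 4$.

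Combining the two, $\omega_2(n)$ for $n<0$ equals $\omega_2(-n)$ times $-1$ exactly when the $n_2\equiv3\pmod4$ clause is active. The same holds for $\rw_3$, which depends on $s$ and on $n_3\bmod9$, and $n_3\mapsto -n_3$ swaps $1\leftrightarrow2\pmod3$ and $2,4\leftrightarrow7,5\pmod9$. So the product $\omega_2\omega_3$ must be checked case by case to satisfy $\omega_2(-n)\omega_3(-n)=-\omega_2(n)\omega_3(n)$, which is precisely the oddness needed for the $\sign(n)$ factor.

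This parity check is the main obstacle. It is essentially the known fact that $y^2=x^3+n$ and $y^2=x^3-n$ have opposite root numbers up to the archimedean factor being the same, i.e.\ $\rw(-n)=-\rw(n)$ for the $2,3$-part once the common $p\ge5$ factor $a(n)=a(-n)$ is removed. I would verify it by going through the valuation classes of $v_2$ mod $6$ and $v_3$ mod $6$ and checking that exactly one of the two local factors flips. If this oddness failed, one would instead let $\sigma$ depend on the sign and rerun the proof of Proposition~\ref{prop:cell-progressions} separately on $I_+$ and $I_-$. In that case the main term would become a combination of $|I_+|$ and $|I_-|$ with two constants, so the stated form really does require the oddness.

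Finally, with these identifications $w(n;z)=\rw(n;z)$ for all $n$, including $n=0$. Proposition~\ref{prop:cell-progressions} then gives the asymptotic with main term $\rho(u,q)c(u,q;z)(|I_+|-|I_-|)/q$ and error $O(\langle I\rangle^{1/2+\varepsilon})$, uniformly in $q,u,I,z$, since $b_2,b_3$ are now fixed. The bound $|\rho|\le1$ and the dependence of $\rho(u,q)$ only on $u\bmod 2^{v_2(q)}3^{v_3(q)}$ are inherited verbatim from the last assertion of Proposition~\ref{prop:cell-progressions}.
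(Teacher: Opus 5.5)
Your framework matches the paper's: you apply Proposition~\ref{prop:cell-progressions} with $A=a$, $(b_2,b_3)=(2,2)$ and $\sigma_{r,s,t}=-\omega_2(2^r3^st)\omega_3(2^r3^st)$. Your checks on $A$ are correct, and positive $n$ cause no difficulty. The gap is in the step you yourself call substantive, the cell law for $n<0$.

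You reduce it to the oddness identity $\omega_2(-n)\omega_3(-n)=-\omega_2(n)\omega_3(n)$, that is, $\rw(-n)=-\rw(n)$. This identity is false. For $n=\pm1$ we have $v_2=v_3=0$, so:
\begin{itemize}
\item $\rw_2(\pm1)=-1$;
\item the Jacobi factor is $\bigl(\frac{-1}{1}\bigr)=1$;
\item $\rw_3(\pm1)=+1$, because $\pm1\equiv1,8\pmod9$.
\end{itemize}
Hence $\omega_2\omega_3=-1$ at both $n=1$ and $n=-1$, and $\rw(1)=\rw(-1)=+1$. This is consistent with $y^2=x^3\pm1$ both having rank $0$. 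Note that $y^2=x^3-n$ is just the quadratic twist of $y^2=x^3+n$ by $-1$, and no general sign rule relates them. So the case-by-case check you propose would fail. Your fallback conclusion, that the stated form with a single constant multiplying $|I_+|-|I_-|$ ``really does require the oddness'', is also incorrect.

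The missing observation concerns what a cell is. In Proposition~\ref{prop:cell-progressions}, $t$ is the residue of the \emph{signed} unit $n/(2^r3^s)$, not of its absolute value; your ``$|t|\bmod4$'' conflates the two. A negative $n$ therefore lies in the same cell as the positive integer $2^r3^st$, whereas $-n$ lies in the different cell $(r,s,-t)$. The correct comparison is between $n$ and $2^r3^st$, not between $n$ and $-n$.

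Make that comparison. $\rw_2(n)$ depends only on $r$ and the signed odd part $n_2\equiv3^st\pmod4$. $\rw_3(n)$ depends only on $s$ and the signed unit $n_3\equiv2^rt\pmod9$. So both local root numbers, and the factor $(-1)^{v_3(n)}$, coincide with their values at $2^r3^st$. The only change is in the Jacobi symbol: since $|n_2|=-n_2\equiv-3^st\pmod4$, we get $\bigl(\frac{-1}{|n_2|}\bigr)=-\bigl(\frac{-1}{3^st}\bigr)$. This supplies exactly the factor $\sign(n)$, which is the paper's argument.

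This route needs no relation between $\sigma_{r,s,t}$ and $\sigma_{r,s,-t}$. The reflection $n\mapsto-n$ is already handled inside the proof of Proposition~\ref{prop:cell-progressions}, through $(u,t)\mapsto(-u,-t)$ and $\lambda_{r,s,-t}(-u,q)=\lambda_{r,s,t}(u,q)$. For example, $\sigma_{0,0,1}=1$ and $\sigma_{0,0,35}=-1$, and the cell law holds at both $n=1$ and $n=-1$.
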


\begin{proof}
Apply Proposition~\ref{prop:cell-progressions} with $A=a$, $(b_2,b_3)=(2,2)$, and
\[
\sigma_{r,s,t}
=-\omega_2(2^r3^s t)\omega_3(2^r3^s t).
\]
For positive $n$, a cell fixes $v_2(n)$, $v_3(n)$, the odd part modulo $4$, and the $3$-adic unit modulo $9$, hence fixes the local sign.
For negative $n$, the same cell fixes the two local root numbers; with $n_2$ as in \eqref{eq:def-units-j0},
\[
\left(\frac{-1}{|n_2|}\right)
=-\left(\frac{-1}{3^s t}\right),
\]
which supplies exactly the factor $\sign(n)$.
Proposition~\ref{prop:cell-progressions} now gives all the stated assertions.
\end{proof}

\subsection{Truncation error for $c(u,q)$}
We now compare the truncated Euler product $c(u,q; \eta)$ with the full factor $c(u,q)$.
Let $d \geq 2$ be a fixed integer.
Fix $x$ sufficiently large, and set
\[
\eta = \frac{\log x}{\log \log x}, \quad \theta = \frac{\log \log x}{\log \log \log x}.
\]
We take the lower bound on $x$ large enough that $\eta\ge5$ and $\theta>1$.
\begin{lemma}\label{lem:mert}
We have
\[
\sum_{\substack{\eta<p\le\log x \\ p~\text{prime}}}\frac1p=O(\theta^{-1}).
\]
\end{lemma}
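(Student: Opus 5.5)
The plan is to apply Mertens' theorem in its quantitative form,
\[
\sum_{p\le y}\frac1p=\log\log y+M+O\Bigl(\frac1{\log y}\Bigr),\qquad y\ge2.
\]
Taking the difference of the instances $y=\log x$ and $y=\eta$ gives
\[
\sum_{\eta<p\le\log x}\frac1p=\log\log\log x-\log\log\eta+O\Bigl(\frac1{\log\eta}\Bigr).
\]
The assumption $\eta\ge5$ guarantees that both instances are legitimate. It then remains to bound the main term and the error term separately by $O(\theta^{-1})$.

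For the main term, I substitute $\log\eta=\log\log x-\log\log\log x$. Writing $L=\log\log x$ and $\ell=\log\log\log x=\log L$, this gives
\[
\log\log\eta=\log(L-\ell)=\log L+\log\Bigl(1-\frac{\ell}{L}\Bigr).
\]
Hence the main term equals
\[
-\log\Bigl(1-\frac{\ell}{L}\Bigr)=O\Bigl(\frac{\ell}{L}\Bigr)=O(\theta^{-1}).
\]
The expansion $-\log(1-y)=O(y)$ is valid here because $\ell/L\to0$, and indeed $\ell/L\le 1/e$ always holds.

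For the error term, note that $\log\eta=L-\ell\gg L$. Therefore $1/\log\eta\ll 1/L\le \ell/L=\theta^{-1}$ once $\ell\ge1$, which holds for $x$ large; this uses the assumption $\theta>1$ together with the largeness of $x$. Combining the two estimates gives the lemma.

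No step is a genuine obstacle. The only point needing care is that the Mertens error term $1/\log\eta$ is of size $1/\log\log x$, which is smaller than $\theta^{-1}$; it is not the dominant contribution.
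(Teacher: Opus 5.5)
Your proposal is correct and follows the same approach as the paper, which also cites Mertens' theorem and substitutes $\log\eta=\log\log x-\log\log\log x$. You have simply written out the main-term and error-term estimates that the paper leaves implicit.
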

\begin{proof}
Mertens' theorem and $\log\eta=\log\log x-\log\log\log x$ give the stated bound.
\end{proof}

The next proposition compares the full and truncated Euler products uniformly on the progressions used below.
\begin{proposition}\label{prop:diff}
Let $q\le x^d$ and $u\pmod q$, and suppose that no prime $p>\eta$ satisfies $p^2\mid\gcd(u,q)$.
Then
\[
|c(u,q)-c(u,q;\eta)|\ll\theta^{-1}.
\]
\end{proposition}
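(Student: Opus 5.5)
Since $c(u,q;\eta)=\prod_{p\le\eta}c_p$ and $c(u,q)=\prod_p c_p$, with $c_p=c_p(u\bmod p^{v_p(q)},p^{v_p(q)})$ as in Lemma~\ref{lem:c-local}, we will control the tail $\prod_{p>\eta}c_p$. By Lemma~\ref{lem:squarefull-kernel}, $|c(u,q;\eta)|\le1$. Hence, once every $c_p$ for $p>\eta$ is shown to be close to $1$, we get
\[
|c(u,q)-c(u,q;\eta)|\le\Bigl|\prod_{p>\eta}c_p-1\Bigr|,
\]
and it suffices to bound $\sum_{p>\eta}|c_p-1|$ by $O(\theta^{-1})$ (when this sum is small, $|\prod(1+\delta_p)-1|\ll\sum|\delta_p|$).

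The first step is to bound $|c_p-1|$ for $p>\eta$. Write $k=v_p(q)$ and $t=v_p(u)$. In \eqref{eq:cp-def} the $r=0$ term is $1$, and $h(p)=0$. The hypothesis that $p^2\nmid\gcd(u,q)$ gives $\min(k,t)\le1$.
\begin{itemize}
\item If $k\le1$, every term with $r\ge2$ carries the factor $p^{\min(r,k)-r}\le p^{1-r}$, so $|c_p-1|\le\sum_{r\ge2}2p^{1-r}\ll p^{-1}$.
\item If $k\ge2$ and $t\le1$, the condition $\min(r,k)\le t$ forces $r\le t\le1$ for $r\le k$. So for $2\le r\le k$ nothing survives, and terms with $r>k$ would need $k\le t$, which is impossible. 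Hence $c_p=1$.
\end{itemize}
Thus $|c_p-1|\ll 1/p$ when $p\mid q$ and $k=1$. When $p\nmid q$ we have $k=0$, the factor is $p^{-r}$, and $|c_p-1|\ll p^{-2}$.

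The second step is to sum these bounds. The primes $p\nmid q$ contribute $\sum_{p>\eta}p^{-2}\ll1/\eta\ll\theta^{-1}$. The primes $p>\eta$ with $p\|q$ contribute $\sum_{p\mid q,\,p>\eta}1/p$. This is the step needing care, since we only know $q\le x^d$. I would split it at $\log x$:
\begin{itemize}
\item Primes $\eta<p\le\log x$ contribute $O(\theta^{-1})$ by Lemma~\ref{lem:mert}.
\item Primes $p>\log x$ dividing $q$ number at most $\log q/\log\log x\le d\log x/\log\log x$. Their contribution is therefore at most $d/\log\log x$, which is $\ll\theta^{-1}$ because $\theta\le\log\log x$.
\end{itemize}
Hence $\sum_{p>\eta}|c_p-1|\ll\theta^{-1}$.

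For the final step, once $x$ is large enough this sum is at most $1/2$, say. Then $\bigl|\prod_{p>\eta}c_p-1\bigr|\le\exp\bigl(\sum|c_p-1|\bigr)-1\ll\theta^{-1}$, and the reduction in the first paragraph gives the claim. The main obstacle is the count of large prime divisors of $q$. The hypothesis excludes squares of large primes dividing $\gcd(u,q)$, which is exactly what prevents $c_p$ from deviating by a constant amount.
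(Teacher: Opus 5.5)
Your proposal is correct and follows the paper's proof essentially step for step. Like the paper, you factor off $c(u,q;\eta)$ using $|c(u,q;\eta)|\le1$, bound $\delta_p=c_p-1$ through $\min(v_p(q),v_p(u))\le1$, split $\sum_{p>\eta}|\delta_p|$ at $\log x$ using Lemma~\ref{lem:mert} and the count of large prime divisors of $q\le x^d$, and finish with the standard product estimate; the only difference is that your bound $\ll p^{-1}$ for $v_p(q)=1$ is slightly cruder than the paper's, which is $0$ unless $p\mid u$, and this is immaterial.
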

\begin{proof}
By Lemma~\ref{lem:c-local} and the definition of the truncation, set
\[
C_{\eta}(u,q)=\prod_{p>\eta}c_p\bigl(u\bmod p^{v_p(q)},p^{v_p(q)}\bigr),
\]
so that
\begin{equation}\label{eq:c-factor-tail}
c(u,q)-c(u,q; \eta)=c(u,q; \eta)\bigl(C_{\eta}(u,q)-1\bigr).
\end{equation}

From \eqref{eq:h-prime-powers} and $a(n)\in\{\pm1\}$ we obtain
\begin{equation}\label{eq:h-bound}
h(p)=0,\quad |h(p^r)|\le 2 \, \text{ for } \, r\ge 2.
\end{equation}
Fix a prime $p>\eta$, put $k=v_p(q)$ and $t=v_p(u)$, with $v_p(0)=+\infty$.
Using \eqref{eq:cp-def} and \eqref{eq:h-bound}, we obtain
\[
c_p(u,p^k)=1+\sum_{\substack{r\ge 2\\ \min(r,k)\le t}} h(p^r)\,p^{\min(r,k)-r}.
\]
The hypothesis gives $\min(k,t)\le1$.
Hence, on writing $\delta_p=c_p(u,p^k)-1$, the geometric series in the preceding display immediately yields
\begin{equation}\label{eq:delta-bound}
|\delta_p|
\ll
\begin{cases}
p^{-2}, & p\nmid q,\\
p^{-1}, & p\mid q,\ v_p(q)=1,\ p\mid u,\\
0, & \text{otherwise}.
\end{cases}
\end{equation}

From \eqref{eq:delta-bound} we obtain
\begin{align}
\nonumber \sum_{p>\eta}|\delta_p|
&\ll
\sum_{\substack{p>\eta\\ p\nmid q}} p^{-2}
\;+\;
\sum_{\substack{p>\eta\\ p\mid q}} p^{-1} \\
&\ll
\frac{1}{\eta}
\;+\;
\sum_{\substack{\eta<p\le \log x}} p^{-1}
\;+\;
\sum_{\substack{p>\log x\\ p\mid q}} p^{-1}. \label{eq:sum-delta-split2}
\end{align}
The first term is $O(\eta^{-1})$.
The middle term is $O(\theta^{-1})$ by Lemma~\ref{lem:mert}.
For the last term, write $p_1,\dots,p_R$ for the distinct prime divisors of $q$ exceeding $\log x$.
Then $(\log x)^R \le p_1\cdots p_R \le q \le x^d$, hence $R\le d\log x/\log\log x$, and therefore
\[
\sum_{\substack{p>\log x\\ p\mid q}} \frac1p \le \frac{R}{\log x}\ll \frac{d}{\log\log x}.
\]
Since $\eta^{-1}\ll (\log\log x)/\log x$ and $(\log\log x)^{-1}\ll \theta^{-1}$ for $x\ge 1000$, we deduce from \eqref{eq:sum-delta-split2} that
\begin{equation*}
\sum_{p>\eta}|\delta_p| \ll \theta^{-1}.
\end{equation*}

Absolute convergence and the standard product estimate therefore give
\begin{equation}\label{eq:Ctail-approx}
C_{\eta}(u,q)=1+O(\theta^{-1}).
\end{equation}
The divisor sum argument in Lemma~\ref{lem:squarefull-kernel}(2) identifies $c(u,q;\eta)$ with the limiting mean of $a(\cdot;\eta)\in\{\pm1\}$ on the residue class $u\pmod q$.
Hence $|c(u,q;\eta)|\le 1$.
From \eqref{eq:c-factor-tail} and \eqref{eq:Ctail-approx}, we obtain
\[
|c(u,q)-c(u,q; \eta)|
\le |c(u,q; \eta)| \cdot |C_{\eta}(u,q)-1|
\ll \theta^{-1}.
\]
\end{proof}

The same tail estimate holds for either family.

\begin{lemma}\label{lem:squarefull-tail}
Let $d\ge2$ and let $x$ be sufficiently large, and put
\[
\eta=\frac{\log x}{\log\log x},\qquad
\theta=\frac{\log\log x}{\log\log\log x}.
\]
Let $A:\NN\to\{\pm1\}$ be multiplicative, with $A(p)=1$ for every prime $p\ge5$ and $A(p^r)=1$ for $p\in\{2,3\}$ and $r\ge1$.
Let $q\le x^d$ and $u\pmod q$, and suppose that no prime $p>\eta$ satisfies $p^2\mid(u,q)$.
Then
\[
|C_A(u,q;\infty)-C_A(u,q;\eta)|\ll\theta^{-1}.
\]
\end{lemma}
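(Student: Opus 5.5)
The argument is the proof of Proposition~\ref{prop:diff}, run for a general kernel $A$. The only properties of $h=a*\mu$ used there were multiplicativity, $h(p)=0$, and $|h(p^r)|\le2$ for all $r$, together with the bound $|c(u,q;\eta)|\le1$. All of these hold for $h_A=A*\mu$ under the present hypotheses, by Lemma~\ref{lem:squarefull-kernel}(1) and (2).

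First, $C_A(u,q;z)$ factors as an Euler product. The summand $h_A(d)\gcd(d,q)/d$, set to zero unless $\gcd(d,q)\mid u$, is multiplicative in $d$. The argument of Lemma~\ref{lem:c-local} therefore gives
\[
C_A(u,q;\infty)=\prod_p c_{A,p}\bigl(u,p^{v_p(q)}\bigr),
\quad
c_{A,p}(u,p^k)=\sum_{\substack{r\ge0\\ \min(r,k)\le v_p(u)}}h_A(p^r)p^{\min(r,k)-r}.
\]
This product converges absolutely, and $C_A(u,q;\eta)$ is the subproduct over $p\le\eta$. Setting $C_{A,\eta}(u,q)=\prod_{p>\eta}c_{A,p}$, we get
\[
C_A(u,q;\infty)-C_A(u,q;\eta)=C_A(u,q;\eta)\bigl(C_{A,\eta}(u,q)-1\bigr).
\]

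Next, bound each local deviation $\delta_p=c_{A,p}-1$ for $p>\eta$. Since $h_A(p)=0$, only terms with $r\ge2$ contribute. The hypothesis gives $\min(v_p(u),v_p(q))\le1$. If $p\nmid q$, then $|\delta_p|\ll p^{-2}$. If $v_p(q)\ge2$, then $v_p(u)\le1$, so the condition $\min(r,k)\le v_p(u)$ kills every $r\ge2$ and $\delta_p=0$. If $v_p(q)=1$, the terms $r\ge2$ survive only when $p\mid u$, and they give $\sum_{r\ge2}2p^{1-r}\ll p^{-1}$.

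Summing these bounds exactly as in \eqref{eq:sum-delta-split2}:
\begin{itemize}
\item the primes $p\nmid q$ contribute $O(\eta^{-1})$;
\item the primes $\eta<p\le\log x$ dividing $q$ contribute $O(\theta^{-1})$, by Lemma~\ref{lem:mert};
\item the primes $p>\log x$ dividing $q$ number $R\le d\log x/\log\log x$, because $q\le x^d$, so they contribute $O(1/\log\log x)$.
\end{itemize}
Altogether $\sum_{p>\eta}|\delta_p|\ll\theta^{-1}$. For $x$ large this sum is below $1/2$, so $\log$ of the product is controlled and $C_{A,\eta}(u,q)=1+O(\theta^{-1})$.

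Finally, $|C_A(u,q;\eta)|\le1$ by Lemma~\ref{lem:squarefull-kernel}(2), since $\eta\ge5$ is an admissible value of $z$. Inserting this into the factorisation gives the claimed bound.

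There is no real obstacle. The one point that needs care is the case $v_p(q)=1$ with $p\mid u$, where the local deviation is only of size $p^{-1}$ rather than $p^{-2}$. This is exactly why the size bound $q\le x^d$ and the Mertens estimate of Lemma~\ref{lem:mert} are needed, and why the error is $\theta^{-1}$ rather than something smaller.
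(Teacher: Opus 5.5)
Your proposal is correct and follows the paper's proof. The paper likewise notes that the argument of Proposition~\ref{prop:diff} uses only the Euler product, $(A*\mu)(p)=0$, $|(A*\mu)(p^r)|\le2$, and the bound $|C_A(u,q;\eta)|\le1$ from Lemma~\ref{lem:squarefull-kernel}(2), so that \eqref{eq:delta-bound}, \eqref{eq:sum-delta-split2} and the product estimate carry over verbatim; your case analysis of $\delta_p$ simply writes out that verification.
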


\begin{proof}
The proof of Proposition~\ref{prop:diff} uses only $(A*\mu)(p)=0$, $|(A*\mu)(p^r)|\le2$, the Euler product deduced from Lemma~\ref{lem:squarefull-kernel}, and the bound $|C_A(u,q;\eta)|\le1$ from Lemma~\ref{lem:squarefull-kernel}(2).
Thus \eqref{eq:delta-bound}, the prime sum \eqref{eq:sum-delta-split2}, and the product inequality apply verbatim.
\end{proof}

\section{Averages for fixed forms}\label{sec:fixed-forms}

In this section we prove Theorem~\ref{thm:asy}(i).
The corresponding proof of Theorem~\ref{thm:asy}(ii), in which every odd-exponent irreducible factor has degree at most $6$, is given in Section~\ref{sec:j1728-generic}.
We use $S_{\mathrm{prim}}(x)$ as defined in \eqref{eq:def-Sprim} and the local factors $\omega_2,\omega_3,\omega_p$ from \eqref{eq:def-omega23} and \eqref{eq:def-omega-p}.
The proof relies on two auxiliary lemmas: Lemma~\ref{lem:asy-qadic} controls the contribution of pairs where a fixed prime divides the form to high order, and Lemma~\ref{lem:asy-large-squares} bounds the contribution from large prime squares.
Together with the lattice point asymptotic \eqref{eq:asy-periodic}, these allow the root number to be approximated by a periodic function on a set of full density.

We begin by recording the lattice point asymptotic we shall use.
If $\mathcal R\subset \RR^2$ is bounded with polygonal boundary and $v:\ZZ^2\to \CC$ is periodic modulo $M$, then
\begin{equation}\label{eq:asy-periodic}
\sum_{\substack{(m,n)\in N\mathcal R\cap \ZZ^2\\ \gcd(m,n)=1}}v(m,n)
=c_{\mathcal R,v}N^2+O_{\mathcal R,M,v}(N\log N),
\end{equation}
for some constant $c_{\mathcal R,v}\in \CC$.
Although the error term depends on the chosen period $M$, the leading constant depends only on $\mathcal R$ and $v$.
This follows by decomposing into residue classes modulo $M$, applying M\"obius inversion to the condition $\gcd(m,n)=1$, and using the standard lattice point estimate in a translate of a sublattice inside a polygonal region.

We isolate the approximation argument that is common to the two families.

\begin{lemma}\label{lem:periodic-approximation}
Let $D,P,R\in\ZZ[m,n]$ be nonzero homogeneous forms, where $P=1$ or $P$ is a squarefree product of primitive irreducible forms of degree at most $6$.
Let $\Sigma$ be a nonempty finite set of primes.
Suppose that a real-valued function $w$, bounded by $1$ on primitive pairs, has a factorisation
\begin{equation}\label{eq:periodic-template}
w(m,n)=\sign(R(m,n))b(m,n)
\prod_{\substack{p\notin\Sigma\\p\ge5}}u_p(m,n)
\end{equation}
whenever $D(m,n)P(m,n)R(m,n)\ne0$, with $b,u_p\in\{\pm1\}$.
Assume the following.
\begin{enumerate}
\item For every choice of integers $H_q\ge1$, indexed by $q\in\Sigma$, there is a periodic sign $b_H$, where $H=(H_q)_{q\in\Sigma}$, such that $b_H(m,n)=b(m,n)$ whenever $q^{H_q}\nmid D(m,n)$ for every $q\in\Sigma$.
\item For every $p\notin\Sigma$ with $p\ge5$, and every $K\ge1$, there is a periodic sign $u_{p,K}$ such that $u_{p,K}(m,n)=u_p(m,n)$ whenever $p^K\nmid P(m,n)$.
\item If $p\notin\Sigma$, $p\ge5$, and $p^2\nmid P(m,n)$, then $u_p(m,n)=1$.
\end{enumerate}
Then
\[
\lim_{N\to\infty}
\frac{1}{\#S_{\mathrm{prim}}(N)}
\sum_{(m,n)\in S_{\mathrm{prim}}(N)}w(m,n)
\]
exists and has absolute value at most $1$.
\end{lemma}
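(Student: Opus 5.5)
We approximate $w$ by periodic functions and show the averages form a Cauchy sequence. Fix parameters $H=(H_q)_{q\in\Sigma}$, $K\ge2$ and $\xi\ge5$, and define on primitive pairs with $DPR\ne0$
\[
w_{H,K,\xi}(m,n)=\sign(R(m,n))\,b_H(m,n)\prod_{\substack{5\le p\le \xi\\ p\notin\Sigma}}u_{p,K}(m,n).
\]
The factor $\sign(R(m,n))$ is not periodic. However, it is constant on each connected component of the complement in $\RR^2$ of the finitely many real lines through the origin on which $R$ vanishes. Intersected with the square $(0,1]^2$, these components are polygonal regions $\mathcal R_j$. On each $N\mathcal R_j$ the remaining factor is periodic modulo $M=\prod_{q\in\Sigma}q^{H_q'}\prod_{p\le\xi}p^{K'}$ for suitable exponents. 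By \eqref{eq:asy-periodic}, the average of $w_{H,K,\xi}$ over $S_{\mathrm{prim}}(N)$ therefore converges to a limit $L(H,K,\xi)$ with $|L|\le1$. The pairs with $DPR=0$ lie on finitely many lines and contribute only $O(N)$.

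Next we bound $|w-w_{H,K,\xi}|$ on a set of density close to $1$. By (1), (2) and (3), $w$ and $w_{H,K,\xi}$ can differ at a primitive pair only in one of the following cases:
\begin{enumerate}
\item[(a)] $q^{H_q}\mid D(m,n)$ for some $q\in\Sigma$;
\item[(b)] $p^K\mid P(m,n)$ for some $p\le\xi$;
\item[(c)] $p^2\mid P(m,n)$ for some $p>\xi$.
\end{enumerate}
Case (a) has upper density $\ll\sum_{q\in\Sigma}\varrho_D(q^{H_q})/q^{2H_q}$. Since $D$ is nonzero, the density of primitive $(m,n)$ with $q^{h}\mid D(m,n)$ tends to $0$ as $h\to\infty$; this follows from a Hensel/counting argument on the roots of $D$ modulo $q^h$, which we expect to be Lemma~\ref{lem:asy-qadic}. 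Case (b) is handled in the same way for each fixed $p$, uniformly enough once $\xi$ is fixed and $K\to\infty$.

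Case (c) is the main obstacle. We must show that the upper density of primitive pairs with $p^2\mid P(m,n)$ for some $p>\xi$ tends to $0$ as $\xi\to\infty$. For $\xi<p\le N^{c}$, summing the local densities gives $\ll\sum_{p>\xi}\deg(P)\,p^{-2}$, because $P$ is squarefree, so for $p$ large and $p\nmid\mathrm{disc}$ the roots are simple. Primes $p>N^c$ require Greaves's squarefree sieve \cite{Gre92}, valid precisely because each irreducible factor of $P$ has degree at most $6$. This is the content we would take from Lemma~\ref{lem:asy-large-squares}: the count of such pairs is $o(N^2)$.

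Combining these estimates, for every $\epsilon>0$ we can choose $\xi$, then $K$ and $H$, such that
\[
\limsup_N\Bigl|\tfrac{1}{\#S_{\mathrm{prim}}(N)}\textstyle\sum\bigl(w-w_{H,K,\xi}\bigr)\Bigr|\le2\epsilon,
\]
using that $|w|\le1$. Hence $\limsup$ and $\liminf$ of the averages of $w$ differ by at most $4\epsilon$, so the limit exists. It equals $\lim L(H,K,\xi)$ along this sequence of parameters, and therefore has absolute value at most $1$.
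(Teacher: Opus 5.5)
Your proposal is correct and follows essentially the same route as the paper. You approximate $w$ on each sign-sector of $R$ by the periodic product $\sign(R)\,b_H\prod_{5\le p\le\xi,\,p\notin\Sigma}u_{p,K}$ and apply \eqref{eq:asy-periodic}. The exceptional sets are controlled by Lemma~\ref{lem:asy-qadic} (for $q\in\Sigma$ and for the finitely many small $p$) and by Lemma~\ref{lem:asy-large-squares} (for squares of primes $p>\xi$, where you need $\xi\ge Y_0(P)$). The limit then follows as the truncation parameters tend to their limits, in the same order as the paper.
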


\begin{proof}
Fix $\delta>0$.
Lemma~\ref{lem:asy-qadic} supplies $H_q$ so that the union of the fixed prime exceptional sets has upper density less than $\delta$.
Choose $Y$ such that
\[
Y>\max\{\max\Sigma,Y_0(P)\}.
\]
For each prime $5\le p\le Y$ outside $\Sigma$, Lemma~\ref{lem:asy-qadic} supplies $K_p$ so that the set of pairs satisfying $p^{K_p}\mid P(m,n)$ for at least one such $p$ has upper density less than $\delta$.
On each of the finitely many polygonal sectors cut out by the real zeros of $R$, the product of $\sign(R)$, $b_H$, and the $u_{p,K_p}$ is periodic.
Equation~\eqref{eq:asy-periodic} therefore gives a limiting normalized sum for this periodic approximation.

The points with $D(m,n)P(m,n)R(m,n)=0$ contribute $O_{D,P,R}(N)$ and may be discarded.
Elsewhere the approximation can fail only on the two exceptional unions just described, or where $p^2\mid P(m,n)$ for some $p>Y$.
The last set has upper density $O_P(Y^{-1})$ by Lemma~\ref{lem:asy-large-squares}.
Consequently, the upper and lower limits of
\[
\frac{1}{N^2}\sum_{(m,n)\in S_{\mathrm{prim}}(N)}w(m,n)
\]
differ by $O(\delta+Y^{-1})$.
Letting first $\delta\to0$ and then $Y\to\infty$ proves existence.
Division by $\#S_{\mathrm{prim}}(N)=(6/\pi^2)N^2+O(N\log N)$ gives the stated average; its absolute value is at most $1$ because $|w|\le1$.
\end{proof}

\begin{lemma}\label{lem:asy-qadic}
Let $G\in \ZZ[x,y]\setminus\{0\}$ and let $q$ be prime.
Then
\[
\lim_{H\to\infty}\ \limsup_{N\to\infty}\frac{1}{N^2}
\#\Bigl\{1\le m,n\le N:\gcd(m,n)=1,\ q^H\mid G(m,n)\Bigr\}=0.
\]
\end{lemma}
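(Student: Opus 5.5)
We reduce to counting residue classes modulo $q^H$, using the periodicity of the condition $q^H\mid G(m,n)$. For fixed $H$, this condition depends only on $(m,n)\bmod q^H$. Let
\[
\nu(q^H)=\#\{(a,b)\in(\ZZ/q^H\ZZ)^2:\ q\nmid\gcd(a,b),\ G(a,b)\equiv0\pmod{q^H}\}.
\]
Coprime pairs have $q\nmid\gcd(m,n)$. By \eqref{eq:asy-periodic}, or simply by counting lattice points in residue classes while dropping the coprimality condition, the $\limsup$ is at most $\nu(q^H)/q^{2H}$. It therefore suffices to show that $\nu(q^H)/q^{2H}\to0$ as $H\to\infty$.

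Since $G$ is a nonzero binary form, it is homogeneous; if it is not, we apply the argument to each nonzero homogeneous component, or more simply treat $G$ as a nonzero polynomial as below. Pairs with $q\nmid\gcd(a,b)$ have either $q\nmid b$ or $q\nmid a$, and by symmetry we treat $q\nmid b$. For homogeneous $G$ of degree $k$, write $G(a,b)=b^kg(a/b)$ with $g(t)=G(t,1)$. Then the count becomes $q^H\cdot\#\{t\bmod q^H: g(t)\equiv0\pmod{q^H}\}$, up to a factor at most $1$. If $g\equiv0$ identically, then $w\mid G$ and we switch to the chart $q\nmid a$, or factor out the power of $w$; in that case the condition forces $q^H\mid a^{k'}G_1$. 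To avoid this case distinction, a cleaner route is the following measure-theoretic step.

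The key step is a $q$-adic measure bound. In $\ZZ_q^2$ with Haar measure $\mu$, the set $\{(a,b):v_q(G(a,b))\ge H\}$ decreases to the zero set $\{G=0\}$, which has measure zero because $G$ is a nonzero polynomial. This holds since the zero set of a nonzero polynomial in $\ZZ_q^2$ is null, for instance by Fubini: for all but finitely many $b$-values, $G(\cdot,b)$ is a nonzero one-variable polynomial with finitely many roots. Continuity of measure then gives $\mu(\{v_q(G)\ge H\})\to0$. Finally, $\nu(q^H)/q^{2H}\le\mu(\{v_q(G)\ge H\})$, since this is exactly the Haar measure of the union of the corresponding residue discs. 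This proves the lemma.

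The only delicate point is the passage from integer counts to the $q$-adic density. This is handled by periodicity modulo $q^H$: the count of all pairs $1\le m,n\le N$ in a fixed class is $N^2/q^{2H}+O(N)$, which yields the $\limsup$ bound uniformly for fixed $H$. Dropping the coprimality condition only enlarges the set, so no further care is needed.
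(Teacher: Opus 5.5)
Your proof is correct and is essentially the paper's argument. In both, the count is bounded, via periodicity modulo $q^H$, by the Haar measure of $\{v_q(G)\ge H\}\subset\ZZ_q^2$; these sets decrease to the zero set of $G$, which is null by Fubini because only finitely many fibres vanish identically. Your chart-based digression in the second paragraph, and the claim that $G$ is homogeneous (which the lemma does not assume), are unnecessary, since the measure argument applies to any nonzero $G$.
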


\begin{proof}
Dropping the coprimality condition only enlarges the set, so it is enough to count pairs with $q^H\mid G(m,n)$.
This condition is periodic modulo $q^H$, hence its density in the box $[1,N]^2$ tends to the $\ZZ_q^2$-measure of
\[
E_H=\{(x,y)\in \ZZ_q^2:q^H\mid G(x,y)\}.
\]
The sets $E_H$ decrease to the $q$-adic zero set of $G$, which has measure $0$: writing $G(x,y)=\sum_j a_j(x)y^j$, the set of $x\in\ZZ_q$ for which all coefficients $a_j(x)$ vanish is finite, because at least one of the polynomials $a_j$ is nonzero.
For every other $x$, the fibre is the zero set of a nonzero polynomial in $y$, hence is finite and has $\ZZ_q$-measure zero.
Fubini's theorem finishes the argument.
\end{proof}

We also use the elementary implication
\begin{equation}\label{eq:asy-valuations}
\begin{gathered}
a,b\ne0,\quad
a\equiv b \pmod{q^{L+K}}, \quad v_q(a),v_q(b)\le L\\
\Longrightarrow\quad
v_q(a)=v_q(b), \quad
q^{-v_q(a)}a\equiv q^{-v_q(b)}b \pmod{q^K},
\end{gathered}
\end{equation}
obtained by factoring out $q^{\min(v_q(a),v_q(b))}$.

The following complementary estimate controls the pairs for which the square of some large prime divides the form value.

\begin{lemma}\label{lem:asy-large-squares}
Let $\Phi\in \ZZ[x,y]$ be either $1$, or a squarefree product of primitive irreducible homogeneous forms, each of degree at most $6$.
Then, for $Y\ge Y_0(\Phi)$,
\[
\limsup_{N\to\infty}\frac{1}{N^2}
\#\Bigl\{1\le m,n\le N:
\substack{\gcd(m,n)=1,\\
\exists p>Y\text{ with }p^2\mid\Phi(m,n)}
\Bigr\}
\ll_\Phi \frac{1}{Y}.
\]
\end{lemma}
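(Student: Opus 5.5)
This statement is essentially Greaves's squarefree sieve for binary forms with irreducible factors of degree at most $6$ \cite{Gre92}, made uniform in a threshold $Y$. The proof will split the primes $p>Y$ into a small range and a large range, with the cut at $p\le N$ roughly. Each range is handled by a different mechanism.

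\emph{Small primes.} For $Y<p\le \xi N$, with $\xi$ a small fixed constant, we count pairs $(m,n)\in[1,N]^2$ with $\gcd(m,n)=1$ and $p^2\mid\Phi(m,n)$. Since $\Phi$ is squarefree, its discriminant $\operatorname{disc}(\Phi)\ne0$. For $p\nmid \operatorname{disc}(\Phi)\cdot\mathrm{lead}$, the number of primitive root classes $[m:n]\in\PP^1(\FF_p)$ with $p\mid\Phi$ is at most $\deg\Phi$. By Hensel, each such class lifts to exactly one class of $\PP^1(\ZZ/p^2)$. Each lifted class is a lattice of index $p^2$ inside the coprime pairs. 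Its points in $[1,N]^2$ therefore number $O(N^2/p^2+N/p+1)$, by the standard count of a lattice of determinant $p^2$ in a box. The lattice has a short vector of length $\ll p$, which gives the count $\ll N^2/p^2+N/p+1$.

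Summing over $Y<p\le \xi N$ gives
\[
\ll \frac{N^2}{Y}+\frac{N\log\log N}{1}+\frac{N}{\log N}.
\]
After dividing by $N^2$, this is $\ll Y^{-1}+o(1)$. Taking $Y_0(\Phi)$ larger than all primes dividing $\operatorname{disc}(\Phi)$ and the content removes the finitely many degenerate primes.

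\emph{Large primes.} For $p>\xi N$, the main term disappears, and the lattice count alone would give $\sum_{p\le N^{3}}(1+N/p)$, which is far too large because $|\Phi(m,n)|\ll N^{\deg\Phi}$. This is the main obstacle, and here I will invoke Greaves's theorem directly. For each irreducible factor $f$ of degree $k\le6$, Greaves proves that
\[
\#\{(m,n)\in[1,N]^2:\exists p>\xi N,\ p^2\mid f(m,n)\}=o(N^2).
\]
Since $\Phi$ is squarefree, a prime $p$ with $p^2\mid\Phi(m,n)$ and $p$ large either satisfies $p^2\mid f_i(m,n)$ for a single factor $f_i$, or divides two distinct factors $f_i$ and $f_j$. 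In the second case $p$ divides their resultant, because $\gcd(m,n)=1$. The resultant is nonzero, so only finitely many primes occur this way, and these lie below $Y_0$. Degree $\le2$ factors are easy, since then $p^2\le|f|\ll N^2$ and the small-prime argument with a slightly adjusted $\xi$ covers them. Degrees $3$ through $6$ are exactly Greaves's range, treated with his geometry-of-numbers and determinant argument. I will quote it in the form: the count for $p>\xi N$ is $o_\xi(N^2)$ as $N\to\infty$.

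Combining the two ranges and taking $\limsup_{N\to\infty}$, the large range contributes $0$ and the small range contributes $\ll_\Phi Y^{-1}$, which proves the lemma. The case $\Phi=1$ is trivial, since the set is empty.
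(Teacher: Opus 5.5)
Your proposal is correct and follows the paper's proof essentially step for step: for $p>Y_0(\Phi)$ you reduce to a single irreducible factor, use Hensel lifting and a lattice-point count for primes up to a constant multiple of $N$, dispose of factors of degree $\le 2$ by size, and quote Greaves's large-prime estimates for degrees $3$ to $6$ (cutting at $\xi N$ instead of $N$ makes no difference). One small correction: the lattice-count justification is backwards, since a lattice of determinant $p^2$ can have a very short vector, so the uniform bound is $\ll N^2/p^2+N$ as in the paper rather than $N^2/p^2+N/p+1$; this is harmless because $N\pi(\xi N)=o(N^2)$, and likewise the $O(1)$ pairs at which a linear factor vanishes, which your ``$p^2\le|f|$'' step overlooks, do not matter.
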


\begin{proof}
The case $\Phi=1$ is empty, so assume $\Phi\ne1$.
Write $\Phi=\prod_{j=1}^t \phi_j$ with $\phi_j$ irreducible.
We enlarge $Y_0(\Phi)$ so that, for every prime $p>Y_0(\Phi)$, the reduction of the projective divisor $\{\Phi=0\}\subset\PP^1_{\FF_p}$ is squarefree.
For such a prime $p$, if $\gcd(m,n)=1$ and $p\mid\Phi(m,n)$, then $p$ divides at most one of the values $\phi_j(m,n)$.
Hence
\[
p^2\mid \Phi(m,n)\quad\Longrightarrow\quad
p^2\mid \phi_j(m,n)
\]
for some $j$.
It is therefore enough to treat a single factor $\phi=\phi_j$.
For each prime $p>Y$, let $R_p(N)$ denote the number of primitive pairs $1\le m,n\le N$ for which $p^2\mid\phi(m,n)$.
The squarefreeness assumption and Hensel lifting in the two affine charts of $\PP^1$ give, uniformly for $p>Y$,
\[
R_p(N)\ll \frac{N^2}{p^2}+N.
\]
Indeed, there are $O(1)$ simple projective solutions modulo $p^2$, and each solution direction imposes one congruence modulo $p^2$ in the relevant affine chart.
The contribution from primes in $(Y,N]$ is therefore at most
\[
\sum_{Y<p\le N}R_p(N)
\ll N^2\sum_{p>Y}\frac1{p^2}+N\pi(N)
\ll \frac{N^2}{Y}+o(N^2).
\]

It remains to consider $p>N$.
If $\deg\phi=1$, then $|\phi(m,n)|\le CN$ for some $C>0$.
For sufficiently large $N$, divisibility by $p^2$ therefore forces $\phi(m,n)=0$, which accounts for $O(N)$ pairs.
Suppose that $\deg\phi=2$.
Irreducibility excludes $\phi(m,n)=0$ at a primitive integer pair, while $|\phi(m,n)|\le CN^2$ shows that every relevant prime satisfies $p\le C'N$ for some $C'>0$.
Hence
\[
\sum_{N<p\le C'N}R_p(N)
\ll N^2\sum_{p>N}\frac1{p^2}+N\pi(C'N)
\ll \frac{N^2}{\log N}
=o(N^2).
\]
For $3\le\deg\phi\le6$, Greaves's large-prime estimates
\cite[Lemma~2 and the final large-prime estimate in the proof of the main theorem]{Gre92}
apply to the primitive irreducible form $\phi$.
Indeed, $\phi$ has nonzero projective discriminant, and after enlarging $Y_0(\Phi)$ every prime under consideration is a good-reduction prime for $\phi$.
The estimates give
\[
\#\{1\le m,n\le N:\exists~p>N,\ p^2\mid\phi(m,n)\}=o(N^2).
\]
The estimate is stated for all pairs in a box, so restricting to the positive box and to primitive pairs can only decrease the count.
Summing over the finitely many factors of $\Phi$ proves the lemma.
\end{proof}

\begin{lemma}\label{lem:chebotarev-projective}
Let $f\in\ZZ[z,w]$ be primitive and irreducible homogeneous, and let $K$ be a quadratic field.
Let $K_f$ be the residue field defined in \eqref{eq:def-Kh}.
If $K\not\subset K_f$, then there are infinitely many primes inert in $K$ for which the divisor $\{f=0\}$ has a simple point in $\PP^1(\FF_q)$.
\end{lemma}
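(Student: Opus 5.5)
The plan is to use the Galois closure and Chebotarev. Let $L$ be the splitting field over $\QQ$ of $f(T,1)$, or of $f(1,T)$ if $f(T,1)$ is constant. If $f$ is linear in $w$ alone, then $K_f=\QQ$ and the point is rational. Let $\alpha$ be a root generating $K_f\cong\QQ(\alpha)$, and let $M=LK$ be the compositum. Put $G=\Gal(M/\QQ)$, with subgroups $H=\Gal(M/K_f)$ (the stabiliser of $\alpha$) and $N=\Gal(M/K)$, which has index $2$. The hypothesis $K\not\subset K_f$ means exactly that $H\not\subset N$, since $M^H=K_f$ and $M^N=K$. So we may choose $\sigma\in H\setminus N$. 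This $\sigma$ fixes $\alpha$ and acts nontrivially on $K$.

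By Chebotarev, the set of primes $q$ unramified in $M$ whose Frobenius class contains $\sigma$ has positive density, so it is infinite. We discard the finitely many primes that divide the leading coefficient of the relevant dehomogenisation or the discriminant of $f(T,1)$, together with those ramified in $M$ and those dividing the index $[\mathcal O_{K_f}:\ZZ[\alpha]]$ (or avoid the index by working with $\mathcal O_{K_f}$ directly). For each remaining $q$:
\begin{enumerate}
\item $q$ is inert in $K$, because $\sigma\notin N$ means $\mathrm{Frob}_q$ acts nontrivially on $K$.
\item $\mathrm{Frob}_q$ fixes the root $\alpha$ for a suitable choice of prime above $q$, so by Dedekind--Kummer the factorisation of $f(T,1)$ modulo $q$ has a linear factor, giving a root $\bar\alpha\in\FF_q$.
\end{enumerate}
Since $q$ does not divide the discriminant, $f(T,1)\bmod q$ is separable. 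Hence $\bar\alpha$ is a simple root and $[\bar\alpha:1]$ is a simple point of $\{f=0\}\subset\PP^1(\FF_q)$. Because $q$ also does not divide the leading coefficient, reduction preserves degree and no root escapes to infinity, so this point genuinely lies on the reduced divisor.

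The only real subtlety is making the link between Frobenius fixing a root and the existence of a simple root modulo $q$ rigorous. This uses the standard correspondence between orbits of $\mathrm{Frob}_q$ on the roots and the degrees of the irreducible factors modulo $q$, which is valid for $q$ not dividing the discriminant. Everything else is routine bookkeeping of finitely many excluded primes. When $f(T,1)$ is constant, $f$ is a power of $w$ up to a unit, and primitivity together with irreducibility force $f=\pm w$. Then $K_f=\QQ$, so $K\not\subset K_f$ holds automatically, and we only need infinitely many primes inert in $K$, which Chebotarev (or Dirichlet) supplies. In that case the point $[1:0]$ is a simple zero of $w$ in every characteristic.
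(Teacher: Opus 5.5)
Your proposal is correct and follows the paper's argument essentially verbatim: you choose an element of $\Gal(LK/K_f)$ acting nontrivially on $K$, apply Chebotarev to its conjugacy class (which stays outside the index-$2$ normal subgroup $\Gal(LK/K)$), and discard the finitely many primes dividing the leading coefficient or the discriminant of $f(T,1)$, treating the case $f=\pm w$ separately. The Dedekind--Kummer and index discussion is unnecessary, since reducing the fixed root modulo a prime of $LK$ above $q$ whose Frobenius is $\sigma$ directly gives $\bar\alpha^q=\bar\alpha\in\FF_q$.
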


\begin{proof}
If $f(T,1)$ is constant, then $K_f=\QQ$ and every prime outside a finite set gives the simple point $(1:0)$; choosing inert primes in $K$ finishes this case.
Otherwise choose a root $\alpha$ of $f(T,1)$, so that $K_f=\QQ(\alpha)$, let $L$ be the splitting field of $f(T,1)$, and put $E=LK$.
Set $H=\Gal(E/K_f)$, whose fixed field is $K_f$.
If every element of $H$ acted trivially on $K$, then $K$ would lie in $E^H=K_f$, contrary to the hypothesis.
Hence some $h\in H$ acts non-trivially on $K$.
It fixes $K_f$, and therefore fixes the root $\alpha$.
Every conjugate of $h$ is still non-trivial on the Galois quadratic field $K$ and fixes a conjugate of $\alpha$.
Chebotarev's theorem applied to this conjugacy class therefore gives infinitely many unramified primes that are inert in $K$ and for which the Frobenius action fixes a root of $f$.
Away from the finite set of primes of bad reduction, the fixed root reduces to a simple $\FF_q$-point of $\{f=0\}$.
\end{proof}

\begin{lemma}\label{lem:exact-valuation-classes}
Let $\Phi=\prod_j\phi_j$ be a squarefree product of primitive irreducible homogeneous forms, and let $q>\deg\Phi$ be a prime for which $\{\Phi=0\}\subset\PP^1_{\FF_q}$ is squarefree.
If $\{\phi_0=0\}$ has an $\FF_q$-point, then there are primitive residue classes $\mathcal C_0,\mathcal C_2$ modulo $q^3$ such that
\[
q\nmid\Phi(m,n)\quad\text{on }\mathcal C_0,
\]
and on $\mathcal C_2$ we have
\[
v_q(\phi_0(m,n))=2,\quad
q\nmid\phi_j(m,n)\quad\text{for }j\ne0.
\]
\end{lemma}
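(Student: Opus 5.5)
We construct both classes explicitly in one affine chart, after choosing the $\FF_q$-point suitably. Since $q>\deg\Phi$, the form $\Phi$ does not vanish identically on $\PP^1(\FF_q)$. Indeed, a nonzero binary form of degree less than $q+1$ has at most $\deg\Phi<q+1=\#\PP^1(\FF_q)$ projective zeros, and the reduction of $\Phi$ is nonzero because the $\phi_j$ are primitive. So we may pick $(m_0:n_0)\in\PP^1(\FF_q)$ with $q\nmid\Phi(m_0,n_0)$. Take any primitive lift modulo $q^3$, with at least one coordinate a unit, and let $\mathcal C_0$ be its residue class. On this class $\Phi(m,n)\equiv\Phi(m_0,n_0)\not\equiv0\pmod q$, which proves the first claim.

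For $\mathcal C_2$, let $P=(a:b)\in\PP^1(\FF_q)$ be a point of $\{\phi_0=0\}$. Because $\{\Phi=0\}$ is squarefree over $\FF_q$, the point $P$ is a simple zero of $\bar\phi_0$ and is not a zero of any $\bar\phi_j$ with $j\ne0$. By symmetry we may assume $b\not\equiv0$, work in the chart $n$ a unit, and scale so that $n\equiv1$. Put $f(T)=\phi_0(T,1)$. Then $T\equiv a$ is a simple root of $f$ modulo $q$, that is, $f(a)\equiv0$ and $f'(a)\not\equiv0\pmod q$. If $b\equiv0$, the same argument works in the other chart $m=1$.

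Next we lift and perturb. By Hensel's lemma there is $\alpha\in\ZZ$ with $f(\alpha)\equiv0\pmod{q^3}$ and $\alpha\equiv a\pmod q$. Set $m=\alpha+q^2$ and $n=1$. Taylor expansion gives
\[
f(\alpha+q^2)=f(\alpha)+q^2f'(\alpha)+O(q^4)\equiv q^2f'(\alpha)\pmod{q^3},
\]
and $q\nmid f'(\alpha)$, so $v_q(f(m))=2$. Here the Taylor expansion has integer coefficients $f^{(k)}(\alpha)/k!$, so the error term is genuinely divisible by $q^4$.

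Finally we pass to the residue class. Let $\mathcal C_2$ be the class of $(\alpha+q^2,1)$ modulo $q^3$; it is primitive. For any $(m,n)$ in this class, $\phi_0(m,n)\equiv\phi_0(\alpha+q^2,1)\pmod{q^3}$, because $\phi_0$ has integer coefficients. Since the right-hand side has valuation exactly $2<3$, we get $v_q(\phi_0(m,n))=2$ on the whole class. For $j\ne0$, we have $\phi_j(m,n)\equiv\phi_j(a,1)\not\equiv0\pmod q$, because $P$ is not a zero of $\bar\phi_j$.

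The only delicate points are choosing the affine chart and guaranteeing simplicity of the root. Both come from the squarefreeness hypothesis on $\{\Phi=0\}$ over $\FF_q$. The hypothesis $q>\deg\Phi$ is used only to produce the class $\mathcal C_0$.
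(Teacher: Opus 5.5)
Your proof is correct and follows essentially the same route as the paper: you take a point off $\{\Phi=0\}$ to get $\mathcal C_0$, and for $\mathcal C_2$ you Hensel-lift the simple root of the dehomogenized $\phi_0$ in an affine chart and then perturb by $q^2$ to force $v_q(\phi_0)=2$. The only difference is cosmetic: you lift to a root modulo $q^3$ and add $q^2$, so $f(\alpha+q^2)\equiv q^2f'(\alpha)\pmod{q^3}$, whereas the paper lifts only modulo $q^2$ and chooses the shift $sq^2$ with $s\bmod q$ outside the single bad class.
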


\begin{proof}
Since $q>\deg\Phi$, some point of $\PP^1(\FF_q)$ lies outside $\{\Phi=0\}$; any lift gives $\mathcal C_0$.
For $\mathcal C_2$, work in an affine chart containing the given point and let $g(T)$ be the dehomogenization of $\phi_0$.
Squarefreeness makes its root $u\bmod q$ simple and ensures that no other $\phi_j$ vanishes there.
Lift this root by Hensel's lemma to a root $v\bmod q^2$; thus $g(v)=q^2A$ for some $A\in\ZZ$.
For $s\bmod q$,
\[
g(v+sq^2)\equiv q^2\bigl(A+s g'(u)\bigr)\pmod{q^3}.
\]
Choose $s$ outside the unique class for which the right-hand side vanishes modulo $q^3$.
The resulting point has the required exact valuation, and its other factors remain units modulo $q$.
The same argument in the other affine chart covers the point at infinity.
\end{proof}

\newpage
\begin{lemma}\label{lem:squarefree-class-density}
Let $P\in\ZZ[x,y]$ be $1$ or a squarefree product of primitive irreducible homogeneous forms of degree at most $6$.
Let $M\ge1$ be divisible by every prime of nonsquarefree reduction for the projective divisor $\{P=0\}$.
Let $\mathcal C$ be a primitive residue class modulo $M$, represented by $(a,b)$ with $\gcd(a,b,M)=1$, and let $\mathcal R\subset(0,1]^2$ be a nonempty open polygonal region.
Then the primitive points in $N\mathcal R\cap\mathcal C$ for which
\[
p^2\nmid P(m,n)\quad\text{for every prime }p\nmid M
\]
have positive lower density relative to $N^2$.
\end{lemma}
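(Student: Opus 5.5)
The plan is a sieve: count all primitive points of $N\mathcal R\cap\mathcal C$, then remove those where $p^2\mid P(m,n)$ for some $p\nmid M$. We split the bad primes into small primes $p\le Y$, handled by an exact local density computation, and large primes $p>Y$, handled by Lemma~\ref{lem:asy-large-squares}. If $P=1$ there is nothing to remove. The lattice point asymptotic \eqref{eq:asy-periodic}, applied to the indicator of $\mathcal C$, shows that the primitive points of $N\mathcal R\cap\mathcal C$ number $c_0N^2+O(N\log N)$. To see $c_0>0$, count points by Möbius inversion over $\gcd(m,n)$. The condition $\gcd(a,b,M)=1$ guarantees that the local factor at each $p\mid M$ equals the (positive) proportion of the class among pairs not both divisible by $p$. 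For $p\nmid M$ the factor is $1-p^{-2}$. Since $\mathcal R$ is open, it has positive area, so $c_0=\mathrm{area}(\mathcal R)M^{-2}\prod_{p\mid M}(\ldots)\prod_{p\nmid M}(1-p^{-2})>0$.

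Next fix $Y\ge Y_0(P)$ and impose $p^2\nmid P(m,n)$ for all primes $p\le Y$ with $p\nmid M$. This condition is periodic modulo $M\prod_{p\le Y,\,p\nmid M}p^2$. By the Chinese remainder theorem and \eqref{eq:asy-periodic}, the resulting density is $c_0\prod_{p\le Y,\,p\nmid M}(1-\varrho(p)/p^2)$, where $\varrho(p)/p^2$ is the proportion of pairs mod $p^2$, not both divisible by $p$, with $p^2\mid P$. Here we use that the class condition is independent of the conditions at primes coprime to $M$.

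For $p\nmid M$ the reduction of $\{P=0\}$ is squarefree, so each root modulo $p$ is simple. Hensel lifting, as in the proof of Lemma~\ref{lem:asy-large-squares}, then gives $\varrho(p)/p^2\ll_P p^{-2}$, with the count taken over primitive pairs normalised by $p^2(1-p^{-2})$. Each local factor is also strictly positive. The primitive pairs mod $p^2$ with $p^2\mid P$ lie over at most $\deg P$ projective points, each contributing a proportion of about $1/p^2$ of all pairs. As long as $\deg P< p^2$ this leaves a complement. For the finitely many small $p$ with $p^2\le\deg P$, we instead use a point of $\PP^1(\FF_p)$ off $\{P=0\}$ when one exists. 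If none exists, we use a simple root and choose a lift with exact valuation $1$, in the spirit of Lemma~\ref{lem:exact-valuation-classes}. Hence $\prod_{p\nmid M}(1-\varrho(p)/p^2)$ converges to a positive constant $c_1$, and the small-prime density is at least $c_0c_1>0$ uniformly in $Y$.

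Finally, Lemma~\ref{lem:asy-large-squares} bounds the primitive pairs in $[1,N]^2\supset N\mathcal R$ with $p^2\mid P(m,n)$ for some $p>Y$ by upper density $O_P(Y^{-1})$. Taking $Y$ large enough that this falls below $c_0c_1/2$ leaves lower density at least $c_0c_1/2>0$. The main obstacle is this large-prime step, since the count must be uniform and the primes can exceed $N$. That difficulty is entirely absorbed by Lemma~\ref{lem:asy-large-squares}, and hence by Greaves's estimates through the degree-$6$ hypothesis. The only genuinely new care needed is the positivity of each local factor and of $c_0$ via primitivity modulo $M$.
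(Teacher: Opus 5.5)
Your proposal is correct and matches the paper's proof: a periodic sieve at the primes $p\le Y$ via \eqref{eq:asy-periodic}, with local factors $\delta_p=1-\rho_P(p)/(p(p+1))=1+O_P(p^{-2})$ whose product over $p\nmid M$ converges to a positive constant, followed by Lemma~\ref{lem:asy-large-squares} to discard the primes $p>Y$. Your case split for $p^2\le\deg P$ is unnecessary, though your fallback argument is valid, since $\rho_P(p)\le p+1$ already gives $\delta_p\ge1-1/p>0$ for every prime $p\nmid M$.
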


\begin{proof}
For $p\nmid M$, let $\delta_p$ be the local density of the condition $p^2\nmid P(m,n)$ conditional on $p\nmid\gcd(m,n)$.
Put
\[
\rho_P(p)=\#\{z\in\PP^1(\FF_p):P(z)=0\}.
\]
Since $p\nmid M$, the projective divisor $\{P=0\}$ is squarefree modulo $p$.
Reduction from $\PP^1(\ZZ/p^2\ZZ)$ to $\PP^1(\FF_p)$ has fibres of size $p$, and Hensel lifting shows that above each simple zero of $P$ exactly one projective lift is a zero modulo $p^2$.
Consequently,
\[
\delta_p
=1-\frac{\rho_P(p)}{p(p+1)}
=1+O_P(p^{-2}),
\qquad
\delta_p>0.
\]

For a fixed $Y$, impose the displayed conditions only for $p\le Y$.
Equation~\eqref{eq:asy-periodic} gives a density
\[
c_{\mathcal R,M,\mathcal C}
\prod_{\substack{p\le Y\\p\nmid M}}\delta_p,
\]
where $c_{\mathcal R,M,\mathcal C}>0$ is the density of primitive points in $N\mathcal R\cap\mathcal C$.
The estimates for $\delta_p$ show that the infinite product $\prod_{p\nmid M}\delta_p$ converges to a positive number, so these finite-level densities are bounded below by a positive constant.
Lemma~\ref{lem:asy-large-squares} shows that the conditions with $p>Y$ remove at most $O_P(Y^{-1})N^2+o(N^2)$ points.
Taking $Y$ sufficiently large proves the claim.
\end{proof}

\subsection{The \texorpdfstring{$j=0$}{j=0} case}

We now prove Theorem~\ref{thm:asy}(i).
The key step is to decompose $\rw(F(m,n))$ using \eqref{eq:rootnumber-factor}, approximate the resulting factors by periodic functions away from a set of small density, and apply \eqref{eq:asy-periodic}.

\begin{proof}[Proof of Theorem~\ref{thm:asy}(i)]
Write
\[
F_{\mathrm{red}}=\prod_{i=1}^r f_i,\quad
P_F=\prod_{3\nmid e_i}f_i,
\quad
Q_F=\prod_{e_i\equiv2,4\pmod6}f_i,
\quad
R_F=\prod_{6\nmid e_i}f_i,
\]
with the convention that the empty product is $1$.

Let $\Sigma$ be the finite set of primes $p$ such that either $p\mid 6c$, or $\overline{F_{\mathrm{red}}}$ is not squarefree in $\FF_p[x,y]$.
For $p\notin \Sigma$ and $\gcd(m,n)=1$, at most one of the values $f_i(m,n)$ is divisible by $p$; otherwise $\bar f_i$ and $\bar f_j$ would have a common projective zero, contradicting the squarefreeness of $\overline{F_{\mathrm{red}}}$.
For $p\ge 5$ put
\[
u_p(m,n)=\omega_p(F(m,n))
\left(\frac{-3}{p}\right)^{-v_p(Q_F(m,n))}.
\]
If $p\notin \Sigma$, $\gcd(m,n)=1$, and $p^2\nmid P_F(m,n)$, then $u_p(m,n)=1$.
Indeed, if $p\nmid F_{\mathrm{red}}(m,n)$ then both valuations are $0$.
Otherwise exactly one $f_j(m,n)$ is divisible by $p$.
If $e_j\equiv0$ or $3\pmod6$, then $v_p(F(m,n))\equiv 0$ or $3 \pmod{6}$, so $\omega_p(F(m,n))=1$ and $v_p(Q_F(m,n))=0$.
If $e_j\equiv1$ or $5\pmod6$, then $p^2\nmid P_F(m,n)$ forces $v_p(f_j(m,n))=1$, hence $v_p(F(m,n))\equiv 1$ or $5 \pmod{6}$ and again $u_p(m,n)=1$.
If $e_j\equiv2$ or $4\pmod6$, then $v_p(F(m,n))\equiv 2$ or $4 \pmod{6}$ and $v_p(Q_F(m,n))=1$, so $u_p(m,n)=1$.

Let $\chi_3$ be the non-trivial character modulo $3$, and for $z\neq 0$ define
\[
z_3=z/3^{v_3(z)},
\quad
\psi(z)=\chi_3(z_3).
\]
Since $\chi_3(-1)=\chi_3(2)=-1$ and $\chi_3(p)=\bigl(\frac{-3}{p}\bigr)$ for $p\ge5$, we have
\begin{equation}\label{eq:asy-chi-id}
\prod_{p\ge 5}\left(\frac{-3}{p}\right)^{v_p(z)}
=\sign(z)(-1)^{v_2(z)}\psi(z)
\end{equation}
for every $z\neq 0$.

For primitive $(m,n)$ with $F(m,n)\neq 0$, define
\begin{align*}
b_F(m,n)
&=-\sign(c)\sign(F(m,n))\omega_2(F(m,n))\omega_3(F(m,n))\\
&\quad \times (-1)^{v_2(Q_F(m,n))}\psi(Q_F(m,n))
\prod_{\substack{q\in \Sigma\\ q\ge 5}}
\omega_q(F(m,n))
\left(\frac{-3}{q}\right)^{-v_q(Q_F(m,n))}.
\end{align*}
Since $\omega_p(t)=1$ unless $v_p(t)\equiv 2$ or $4 \pmod{6}$, extending the product in \eqref{eq:rootnumber-factor} to all primes $p\ge 5$ does not change it.
Multiplying and dividing by $\prod_{p\ge 5}\bigl(\frac{-3}{p}\bigr)^{v_p(Q_F(m,n))}$, and using \eqref{eq:asy-chi-id}, we obtain
\begin{equation}\label{eq:asy-factor-main}
\rw(F(m,n))
=\sign(R_F(m,n))b_F(m,n)
\prod_{\substack{p\notin \Sigma\\ p\ge 5}}u_p(m,n).
\end{equation}
Indeed, multiplication by $Q_F$ changes the parity precisely for the factors with $e_i\equiv2$ or $4\pmod6$.
Thus the exponent of $f_i$ in $FQ_F$ is odd exactly when $6\nmid e_i$, and $\sign(F(m,n))\sign(Q_F(m,n))=\sign(c)\sign(R_F(m,n))$.

We verify the hypotheses of Lemma~\ref{lem:periodic-approximation} with $D=F_{\mathrm{red}}$, $P=P_F$, $R=R_F$, $b=b_F$, and $u_p$ as above.
Given integers $H_q\ge1$ for $q\in\Sigma$, consider a pair satisfying $q^{H_q}\nmid F_{\mathrm{red}}(m,n)$ for every $q\in\Sigma$.
Then $v_q(f_i(m,n))\le H_q-1$ for all $i$, and hence
\[
\begin{aligned}
v_q(F(m,n))&\le L_q:=v_q(c)+\sum_i e_i(H_q-1),\\
v_q(Q_F(m,n))&\le L_q':=
\sum_{e_i\equiv2,4\pmod6}(H_q-1).
\end{aligned}
\]
Put
\[
B_2=\max(L_2+2,L_2'+1),\quad
B_3=\max(L_3+2,L_3'+1),
\]
and $B_q=1+\max(L_q,L_q')$ for $q\in\Sigma$, $q\ge5$.
If two primitive pairs outside the bad sets are congruent modulo
\[
M_H
=2^{B_2}3^{B_3}\prod_{\substack{q\in\Sigma\\q\ge5}}q^{B_q},
\]
then \eqref{eq:asy-valuations} fixes the relevant valuations of $F$ and $Q_F$, together with the unit parts modulo $4$ and $9$.
Thus every factor in $b_F$ is fixed, except apparently $\sign(F)$.
This last obstruction is illusory: for the signed odd part $z_2=z/2^{v_2(z)}$,
\[
\sign(z)\left(\frac{-1}{|z_2|}\right)=\chi_4(z_2),
\]
where $\chi_4$ is the primitive real character modulo $4$.
Consequently $b_F$ agrees off the bad sets with a periodic sign modulo $M_H$.

For $p\notin\Sigma$ and $K\ge1$, at pairs satisfying $p^K\nmid P_F(m,n)$, the exact valuation below $K$ of the unique relevant factor is fixed modulo $p^K$; factors absent from $P_F$ contribute the constant $1$.
Hence $u_p$ agrees there with a periodic sign.
Its value is $1$ when $p^2\nmid P_F(m,n)$, as verified above.
All three hypotheses of Lemma~\ref{lem:periodic-approximation} now hold, so the average $C_F$ exists and satisfies $|C_F|\le1$.
Since $F$ is nonzero, the pairs in $S_{\mathrm{prim}}(N)$ for which $F(m,n)=0$ number $O_F(N)$.
Away from this density-zero set, for each $\varepsilon\in\{\pm1\}$,
\[
\mathbf 1_{\{\rw(F(m,n))=\varepsilon\}}
=\frac{1+\varepsilon\rw(F(m,n))}{2}.
\]
Taking averages proves the asserted density formula.

Assume now that there exists $j_0$ with $3\nmid e_{j_0}$ and
\[
\QQ(\sqrt{-3})\not\subset K_{f_{j_0}}.
\]
Write $f=f_{j_0}$ and $e=e_{j_0}$.
By Lemma~\ref{lem:chebotarev-projective}, there are infinitely many primes $q\equiv 2\pmod 3$ for which $\{f=0\}$ has a simple point in $\PP^1(\FF_q)$.
Choose one such prime with $q\notin\Sigma$, $q\ge Y_0(P_F)$, and $q>\deg F_{\mathrm{red}}$.
Lemma~\ref{lem:exact-valuation-classes}, applied to $F_{\mathrm{red}}$ and $f$, gives two primitive classes modulo $q^3$: on the first $q\nmid F_{\mathrm{red}}(m,n)$, while on the second
\[
v_q(f(m,n))=2,\quad q\nmid f_j(m,n)\quad\text{for }j\ne j_0.
\]

For each $\ell\in\Sigma$, choose a primitive point $\mathbf a_\ell\in\ZZ_\ell^2$ with $F_{\mathrm{red}}(\mathbf a_\ell)\ne0$.
This is possible because $F_{\mathrm{red}}$ is a nonzero homogeneous form and hence does not vanish on all of $\PP^1(\QQ_\ell)$.
The valuations of $F(\mathbf a_\ell)$ and $Q_F(\mathbf a_\ell)$ are finite.
Choose $B_\ell$ large enough so that on the class $\mathbf a_\ell\bmod \ell^{B_\ell}$ the valuations and unit residues needed to determine the local factors in $b_F$ are fixed: at $\ell=2$ this includes the data determining the combined periodic factor $\sign(F)\omega_2(F)$ and $(-1)^{v_2(Q_F)}$, at $\ell=3$ the factors $\omega_3(F)$ and $\psi(Q_F)$, and at $\ell\ge5$ the factor
\[
\omega_\ell(F)
\left(\frac{-3}{\ell}\right)^{-v_\ell(Q_F)}.
\]
Let
\[
M_0=\prod_{\ell\in\Sigma}\ell^{B_\ell}.
\]
By the Chinese remainder theorem, these local classes define a primitive residue class modulo $M_0$.
Combining this common class with the two $q$-adic classes above gives primitive classes $\mathcal C_\mathcal A,\mathcal C_\mathcal B$ modulo $M=M_0q^3$.
They have identical local data at every prime dividing $M$ other than $q$.

Let $\mathcal R\subset(0,1]^2$ be a nonempty open polygonal region whose closure avoids the real zero set of $R_F$; on $\mathcal R$ the sign of $R_F$ is constant.
Every prime at which the projective divisor $\{P_F=0\}$ is not squarefree belongs to $\Sigma$, because $P_F$ is a subproduct of $F_{\mathrm{red}}$.
Hence the modulus $M$ is divisible by every such prime.
Apply Lemma~\ref{lem:squarefree-class-density} to $P_F$, $\mathcal R$, and each of the two classes modulo $M$.
We obtain subsets $\mathcal A_N,\mathcal B_N\subset S_{\mathrm{prim}}(N)\cap N\mathcal R$ of positive lower density on which $p^2\nmid P_F(m,n)$ for every $p\nmid M$.
After removing the $O_F(N)$ points for which $F(m,n)=0$, both sets still have positive lower density.

For primes $p\nmid M$, good reduction ensures that $p$ divides at most one value $f_i(m,n)$.
If there is no such value, then $u_p(m,n)=1$.
Otherwise, let $f_i(m,n)$ be the unique divisible value.
If $f_i$ occurs in $P_F$, then $p^2\nmid P_F(m,n)$ forces $v_p(f_i(m,n))=1$; if it does not, then $3\mid e_i$ and its contribution to $u_p$ is identically $1$.
Hence $u_p(m,n)=1$ in all cases.
Thus no prime outside $M$ changes the root number on either set.
The local factors at primes in $M_0$ agree by construction, and $\sign(R_F)$ is constant on $\mathcal R$.
On $\mathcal A_N$, we have $q\nmid F_{\mathrm{red}}(m,n)$, so $u_q(m,n)=1$.
On $\mathcal B_N$, $v_q(f(m,n))=2$.
Hence
\[
v_q(F(m,n))=2e\equiv 2\text{ or }4\pmod 6,
\quad
\omega_q(F(m,n))=\left(\frac{-3}{q}\right)=-1.
\]
If $e\equiv1$ or $5\pmod6$, then $v_q(Q_F(m,n))=0$; if $e\equiv2$ or $4\pmod6$, then $v_q(Q_F(m,n))=2$.
In either case
\[
u_q(m,n)=\omega_q(F(m,n))
\left(\frac{-3}{q}\right)^{-v_q(Q_F(m,n))}=-1.
\]
Hence the root numbers on $\mathcal A_N$ and $\mathcal B_N$ are constant and opposite.
Since both sets have positive lower density relative to $N^2$, and $\#S_{\mathrm{prim}}(N)\sim(6/\pi^2)N^2$, both signs occur with positive relative density inside $S_{\mathrm{prim}}(N)$.
The existing average therefore satisfies $|C_F|<1$.
\end{proof}

\begin{proof}[Proof of Corollary~\ref{cor:conditional-density-j0}]
Recall that $X_F$ is the surface defined in \eqref{eq:DP1-family}.
Separability makes every exponent in the factorisation of $F$ equal to $1$.
Thus Theorem~\ref{thm:asy}(i) applies and gives $|C_F|<1$.
The zero set $F(m,n)=0$ contributes $O_F(x)$ primitive pairs, so it has density $0$ in $S_{\mathrm{prim}}(x)$.
It follows that a positive proportion of the primitive pairs give smooth fibres with root number $-1$.
Positive primitive pairs represent distinct points of $\PP^1(\QQ)$, so these include infinitely many distinct fibres.

Under the stated finiteness hypothesis, the parity theorem gives positive Mordell--Weil rank, and hence infinitely many rational points, on every such fibre.
The union of the rational points on these infinitely many positive rank fibres is Zariski dense in the elliptic surface obtained by blowing up the anticanonical base point.
Indeed, a proper closed subset has only finitely many vertical components, while each of its horizontal components meets a given smooth fibre in only finitely many points.
Hence it cannot contain all rational points of infinitely many positive rank fibres.
The contraction to $X_F$ preserves density.
This is the argument of \cite[Remark~4.6]{VA}.
\end{proof}

\section{A transference principle for polynomial values}\label{sec:bst}

The theorem of Browning, Sofos and Ter\"av\"ainen \cite[Theorem~2.2]{BST} transfers estimates in arithmetic progressions for a divisor-bounded arithmetic function to the values of almost all polynomials of fixed degree ordered by height.
They work under logarithmic-power hypotheses, allowing a sparser set of exceptional moduli and imposing a restriction on $\gcd(u,q)$.
In the present application the available saving is only $\theta^{-1}$, where
\[
\theta=\frac{\log\log x}{\log\log\log x},
\]
and the obstruction is instead residue-class dependent, since the progression estimate may fail when $\gcd(u,q)$ is divisible by a large square of a prime.
Thus \cite[Theorem~2.2]{BST} does not apply verbatim and we require the following variant.

\begin{theorem}\label{thm:transference}
Let $d\ge2$ and let $\varepsilon>0$.
Suppose that $H\ge H_0(d,\varepsilon)$, and set $x=H^{1/(4d)}$.
Let $f:\ZZ\to\CC$ be a function satisfying the following conditions.
\begin{enumerate}
\item[\rm (B)] $|f(n)|\ll1$ for every $n\in\ZZ$;
\item[\rm (AP)] with
\[
\eta=\frac{\log x}{\log\log x},
\quad
\theta=\frac{\log\log x}{\log\log\log x},
\]
for every $1\le q\le x^d$ and every $u\pmod q$ such that no prime $p>\eta$ satisfies $p^2\mid\gcd(u,q)$, we have
\begin{align}\label{eq:APeta}
\sup_{\substack{I \text{ interval} \\ |I| \ge H^{1 - \varepsilon} \\
I \subset [-(d+3)Hx^d,(d+3)Hx^d]}}
\frac{q}{|I|}
\Bigg|
\sum_{\substack{n \in I \\ n \equiv u \pmod q}} f(n)
\Bigg| \, \ll \, \theta^{-1}.
\end{align}
\end{enumerate}
The function $f$ may depend on $H$, provided that the constants in these hypotheses are uniform in $H$.
Then, for any polynomial $g\in\ZZ[t]$ of degree at most $d$ with coefficients in $[-H,H]$ and $g(0)\ne0$, we have
\begin{align}\label{eq:transference-second-moment}
\sum_{|a|,|b| \leq H}
\Bigg|
\sum_{1 \leq n \leq x} f\big(a n+b n^2+g(n)\big)
\Bigg|^2 \ll \theta^{-1} H^2 x^2.
\end{align}
The implied constant may depend on $d$, $\varepsilon$, and the constants appearing in \textup{(B)} and \textup{(AP)}.
\end{theorem}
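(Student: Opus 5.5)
The plan is the usual van der Corput-free ``open the square and linearise'' argument of \cite{BST}: reduce \eqref{eq:transference-second-moment} to sums of $f$ over arithmetic progressions, and check that the progressions that arise satisfy the residue-class condition in (AP) except for a negligible set of $(n_1,n_2)$. Expanding the square, the left side of \eqref{eq:transference-second-moment} equals
\[
\sum_{1\le n_1,n_2\le x}\ \sum_{|a|,|b|\le H} f(P_{a,b}(n_1))\overline{f(P_{a,b}(n_2))},
\qquad P_{a,b}(n)=an+bn^2+g(n).
\]
By (B), the diagonal $n_1=n_2$ contributes $O(H^2x)$, which is $O(\theta^{-1}H^2x^2)$. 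For the off-diagonal terms I fix $n_1\ne n_2$ and aim to show that the inner double sum over $(a,b)$ is $O(\theta^{-1}H^2)$, outside a small exceptional set of pairs.

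Fix $n_1\ne n_2$ and parametrise by $m=P_{a,b}(n_1)$. For a given $m$, solubility requires $m\equiv g(n_1)\pmod{n_1}$. The solutions are then
\[
a=\frac{m-g(n_1)}{n_1}-bn_1,
\]
where $b$ ranges over an interval $J_m$ cut out by $|a|,|b|\le H$; this interval has length $\le 2H/n_1$. Substituting gives
\[
P_{a,b}(n_2)=\frac{n_2(m-g(n_1))}{n_1}+g(n_2)+b\,n_2(n_2-n_1).
\]
So, as $b$ runs over $J_m$, the second argument runs through the progression $u_m\pmod q$, where $q=n_2|n_2-n_1|\le x^2\le x^d$. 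It stays inside an interval $I_m$ of length $\asymp q|J_m|$ contained in $[-(d+3)Hx^d,(d+3)Hx^d]$, since $|P_{a,b}(n)|\le(d+3)Hx^d$.

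The sum over $(a,b)$ thus becomes
\[
\sum_{m}f(m)\,\overline{\sum_{n\in I_m,\ n\equiv u_m\pmod q} f(n)},
\]
with $|m|\ll Hx^2$ and about $Hn_1$ admissible $m$. When $|I_m|\ge H^{1-\varepsilon}$ and $u_m$ is admissible, (AP) bounds the inner sum by $\theta^{-1}|I_m|/q\ll\theta^{-1}H/n_1$, and summing over $m$ gives $\theta^{-1}H^2$. The values of $m$ with $|I_m|<H^{1-\varepsilon}$ are those near the two ends of the range of $m$, where $J_m$ is short. There are only $O(H^{1-\varepsilon}n_1)$ of them, each trivially $O(H/n_1)$, so they contribute $O(H^{2-\varepsilon})$, which is acceptable.

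The main obstacle is the residue-class restriction in (AP), and I would handle it crudely through $q$ alone. Call $(n_1,n_2)$ bad if $p^2\mid q=n_2|n_2-n_1|$ for some prime $p>\eta$. If a pair is not bad, then no $p>\eta$ has $p^2\mid\gcd(u_m,q)$ for any $m$, so (AP) applies uniformly.

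For bad pairs I would use the trivial bound $O(H^2)$ and count them. Either $p^2\mid n_2$, or $p\mid n_2$ and $p\mid n_2-n_1$, or $p^2\mid n_2-n_1$. Each case has probability $\ll p^{-2}+x^{-1}$ among $n_1,n_2\le x$, and only primes $p\le x$ matter. The number of bad pairs is therefore
\[
\ll x^2\sum_{p>\eta}p^{-2}+x\pi(x)\ll x^2/\eta,
\]
and $1/\eta\ll\theta^{-1}$. Summing all contributions over $(n_1,n_2)$ gives \eqref{eq:transference-second-moment}. The hypothesis $g(0)\ne0$ and the dependence of $f$ on $H$ play no role beyond the uniformity of the constants. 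The points I would check most carefully are that $I_m$ really lies in the admissible window $[-(d+3)Hx^d,(d+3)Hx^d]$, and that the count of short-$J_m$ values of $m$ is uniform in $n_1$.
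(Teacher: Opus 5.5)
Your proposal is correct. The linearisation is the paper's with the roles of $n_1$ and $n_2$ interchanged. The paper fixes $m_2=a+bn_2$ and lets the $n_1$-argument run through a progression modulo $n_1|n_1-n_2|$. You fix the $n_1$-value and let the $n_2$-argument run modulo $n_2|n_2-n_1|$.

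The real difference is how you meet the residue-class restriction in (AP). The paper restricts to the pair set $\mathcal N$ of Lemma~\ref{lem:pair-exception-count}. Conditions (ii) and (iii) there rule out $v_p(n_1)=1$ and $p^2\mid n_1$; (iii) needs $g(0)\ne0$ and Mertens' theorem. In the remaining case $p^2\mid\Delta$, the paper keeps the pair and discards only the slices in the single class \eqref{eq:bad-slice-congruence} modulo $p^2$. That count uses condition (i), which also enters its short-slice bound.

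You instead discard every pair for which some $p>\eta$ has $p^2\mid q$. This costs $O\bigl(x^2\sum_{p>\eta}p^{-2}\bigr)=O(x^2/\eta)$ pairs, hence $O(H^2x^2/\eta)$ in total, and afterwards (AP) applies to every slice. Since the target saving is only $\theta^{-1}$ and $\eta^{-1}=o(\theta^{-1})$, nothing is lost.

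Your route is shorter and needs none of conditions (i)--(iii). It therefore shows that the hypothesis $g(0)\ne0$ is superfluous for this statement. It also gives a smaller exceptional set of pairs than the $O(\theta^{-1}x^2)$ of Lemma~\ref{lem:pair-exception-count}. The paper's slice-by-slice analysis, modelled on \cite{BST}, is sensitive to $\gcd(u,q)$ rather than to $q$ alone. That finer structure would matter only if inadmissible moduli were too dense to discard wholesale; here it buys nothing quantitatively.

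Two small points. First, it is $m-g(n_1)$, not $m$, that is $\ll Hx^2$; only the count $O(Hn_1)$ of values of $m$ is used, so this is harmless. Second, for the short-slice count you should record why only slices near the ends can be short. A slice of maximal length $|J_m|=2H/n_1$ has $|I_m|=2Hq/n_1\ge H$, because $n_2|n_2-n_1|\ge n_1/2$ whenever $n_1\ne n_2$. Since $|J_m|$ changes with slope $1/n_1$ in $(m-g(n_1))/n_1$, there are $O(n_1H^{1-\varepsilon})$ short slices, as you claimed.
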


\begin{lemma}\label{lem:pair-exception-count}
Let $d \geq 2$, let $H$ be sufficiently large, and set $x=H^{1/(4d)}$, $\eta=\log x/\log\log x$, and $\theta=\log\log x/\log\log\log x$.
Let $g \in \ZZ[t]$ be of degree at most $d$ with coefficients in $[-H, H]$ and $g(0) \neq 0$.
Define
\begin{align}\label{eq:def-pair-set}
\mathcal{N} = \left\{(n_1, n_2) \in (\NN \cap [1,x])^2: 
\begin{array}{l}
\mathrm{(i)}\ n_1, n_2, |n_1 - n_2| > x/ \eta, \\
\mathrm{(ii)}\ \gcd(n_1, n_2) \le \eta, \\
\mathrm{(iii)}\ \text{no prime } p > \eta \text{ with } p \mid n_1 \\
\quad\text{and } p^2 \mid g(n_1)
\end{array}
\right\}.
\end{align}
Then
\begin{align}\label{eq:pair-exception-bound}
\#\big((\NN \cap [1,x])^2 \setminus \mathcal{N}\big) \ll \theta^{-1} x^2.
\end{align}
\end{lemma}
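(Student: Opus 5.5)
We bound the complement of $\mathcal N$ by a union bound over the three conditions, and show that each failure set has size $O(\theta^{-1}x^2)$. The plan is to count each failure set separately, with condition (iii) the only one needing care.

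\emph{Condition (i).} If $n_1\le x/\eta$, or $n_2\le x/\eta$, or $|n_1-n_2|\le x/\eta$, then one coordinate is confined to an interval of length $O(x/\eta)$ once the other is fixed. These pairs therefore number $O(x^2/\eta)$, and $\eta^{-1}\ll\theta^{-1}$.

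\emph{Condition (ii).} If $\gcd(n_1,n_2)>\eta$, then some integer $k>\eta$ divides both coordinates. Summing over $k$, these pairs number at most
\[
\sum_{k>\eta}\frac{x^2}{k^2}\ll\frac{x^2}{\eta},
\]
which is again $O(\theta^{-1}x^2)$.

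\emph{Condition (iii).} Here $n_2$ is free, so it suffices to show that
\[
\#\{n\le x:\ \exists\,p>\eta,\ p\mid n,\ p^2\mid g(n)\}\ll\theta^{-1}x.
\]
The key observation is that $p\mid n$ and $p\mid g(n)$ together force $p\mid g(0)$. Since $g(0)\ne0$ and $|g(0)|\le H=x^{4d}$, the number $R$ of prime divisors $p>\log x$ of $g(0)$ satisfies $(\log x)^R\le x^{4d}$, so $R\ll \log x/\log\log x$.

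We split the primes $p>\eta$ according to their size.
\begin{itemize}
\item[(a)] For $\eta<p\le\log x$, we use only $p\mid n$. This gives at most $x\sum_{\eta<p\le\log x}p^{-1}\ll\theta^{-1}x$, by Lemma~\ref{lem:mert}.
\item[(b)] For $\log x<p\le x$ with $p\mid g(0)$, again using only $p\mid n$, we get at most $x\cdot R/\log x\ll x/\log\log x\ll\theta^{-1}x$.
\item[(c)] Primes $p>x$ cannot divide any $n\le x$, so they contribute nothing.
\end{itemize}
Any prime counted in (iii) divides $g(0)$, so cases (a)--(c) cover all of them. Multiplying by the $\le x$ choices of $n_2$ gives $O(\theta^{-1}x^2)$.

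We never need the condition $p^2\mid g(n_1)$; it is enough that $p\mid\gcd(n_1,g(n_1))$ forces $p\mid g(0)$. This mirrors the last term of \eqref{eq:sum-delta-split2} in the proof of Proposition~\ref{prop:diff}. The main potential obstacle was primes between $\eta$ and $x$ in (iii), and the divisor-count bound on $g(0)$ combined with Mertens' estimate handles them. Adding the three contributions gives \eqref{eq:pair-exception-bound}.
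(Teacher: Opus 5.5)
Your proposal is correct and follows the paper's proof essentially step for step. Both use the same union bound over (i)–(iii), reduce (iii) to primes $p>\eta$ dividing $g(0)=c_0$, and split $\sum 1/p$ at $\log x$, using Lemma~\ref{lem:mert} below $\log x$ and the bound $(\log x)^R\le|c_0|\le x^{4d}$ on the number of larger prime divisors above it.
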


\begin{proof}
Pairs failing \textup{(i)} number $O(x^2/\eta)$, while those failing \textup{(ii)} number
\[
\ll x^2\sum_{m>\eta}\frac1{m^2}
\ll\eta^{-1}x^2.
\]
For \textup{(iii)}, write $g(t)=\sum_{i=0}^{d}c_i t^i$.
If $n_1=pm$, then
\[
g(n_1)\equiv c_0+pc_1m\pmod{p^2}.
\]
Thus $p^2\mid g(n_1)$ forces $p\mid c_0$, and for each such $p$ there are
\[
\#\{n_1\le x:p\mid n_1,\ p^2\mid g(n_1)\}\ll \frac{x}{p}+1.
\]
Put $\mathcal{P}=\{p>\eta:p\mid c_0\}$.
Since $\eta^{|\mathcal{P}|} \le |c_0| \le H=x^{4d}$, we have $|\mathcal{P}|\ll \log x/\log\log x$.
Splitting the reciprocal sum at $\log x$ and using Lemma~\ref{lem:mert} gives
\[
\sum_{p\in\mathcal{P}}\frac1p\ll\theta^{-1},
\qquad
|\mathcal P|\ll\frac{\log x}{\log\log x}=o(x/\theta).
\]
Hence $O(x/\theta)$ values of $n_1$, and therefore $O(x^2/\theta)$ pairs, fail \textup{(iii)}.
Since $\eta^{-1}\ll\theta^{-1}$, the result follows.
\end{proof}

\subsection{Proof of Theorem~\ref{thm:transference}}

\begin{proof}
Fix the polynomial $g$.
Expanding the square gives
\begin{align}\label{eq:second-moment-expanded}
\sum_{|a|,|b|\le H}
\left|\sum_{1\le n\le x} f(an+bn^2+g(n))\right|^2
&=\sum_{1\le n_1,n_2\le x}
\sum_{|a|,|b|\le H}
f(m_{n_1})\,\overline{f(m_{n_2})},
\end{align}
where $m_n=an+bn^2+g(n)$.
We bound the diagonal contribution $n_1=n_2$ and the off-diagonal contribution $n_1\ne n_2$ separately.

\noindent\emph{Diagonal terms.} The bound $|f|\ll1$ shows that the diagonal contribution is $O(H^2x)$.
Since $\theta=o(x)$, this is negligible compared with $\theta^{-1}H^2x^2$.

\noindent\emph{Off-diagonal terms.} Let $n_1\ne n_2$ and put $\Delta=n_1-n_2$.
Group the sum over $(a,b)$ according to
\[
m_2=a+bn_2.
\]
For a fixed $m_2$, the admissible values of $b$ are the integers satisfying $|b|\le H$ and $|m_2-bn_2|\le H$; they lie in an interval of length at most $2H/n_2$.
Every relevant $m_2$ satisfies
\[
|m_2|=|a+bn_2|\le H(1+n_2),
\]
so there are $O(Hn_2)$ possible values.
Since
\[
a+bn_1=m_2+b\Delta,
\]
the corresponding arguments of $f$ form one residue class in an interval $J_{m_2}$.
The bound $|f|\ll1$ therefore gives
\[
\left|\sum_{|a|,|b|\le H}
f(m_{n_1})\overline{f(m_{n_2})}\right|
\ll\sum_{m_2\,\mathrm{relevant}}|S_{m_2}|,
\]
where
\begin{equation}\label{eq:def-slice-sum}
S_{m_2}=
\sum_{\substack{m\in J_{m_2}\\ m\equiv u\pmod q}} f(m),
\end{equation}
and here
\[
q=n_1|\Delta|,
\quad
u\equiv n_1m_2+g(n_1)\pmod q,
\]
while
\begin{equation}\label{eq:slice-length}
|J_{m_2}|\le \frac{2n_1|\Delta|}{n_2}\,H.
\end{equation}
Since $n_1,n_2\le x$, we have $q=n_1|\Delta|\le x^2\le x^d$.

Finally, note that for $n\le x$ and $|a|,|b|\le H$ we have
\[
|an+bn^2+g(n)|
\le 2Hx^d+(d+1)Hx^d
=(d+3)Hx^d.
\]
Thus all intervals $J_{m_2}$ lie in the range where \eqref{eq:APeta} is available.

\noindent\emph{Pairs in $\mathcal N$.} Assume now that $(n_1,n_2)\in\mathcal N$ and $n_1\ne n_2$.
From \eqref{eq:slice-length} and the definition of $q$,
\begin{equation}\label{eq:slice-density}
\frac{|J_{m_2}|}{q}
\ll \frac{H}{n_2}.
\end{equation}

We call the pair $(u,q)$ in \eqref{eq:def-slice-sum} \emph{admissible} if no prime $p>\eta$ satisfies $p^2\mid \gcd(u,q)$.
We first dispose of the short slices
\begin{equation}\label{eq:short-slice}
|J_{m_2}|<H^{1-\varepsilon}.
\end{equation}
The trivial estimate for an arithmetic progression gives
\[
|S_{m_2}|
\ll 1+\frac{H^{1-\varepsilon}}q.
\]
Condition \textup{(i)} gives
\[
\frac{n_2}{q}
=\frac{n_2}{n_1|\Delta|}
\ll\frac{\eta^2}{x}.
\]
There are $O(Hn_2)$ relevant values of $m_2$, and hence all the short slices for the fixed pair $(n_1,n_2)$ contribute
\[
\ll Hn_2+H^{2-\varepsilon}\frac{n_2}{q}
\ll Hx+H^{2-\varepsilon}\frac{\eta^2}{x}
\ll \theta^{-1}H^2
\]
for sufficiently large $H$.

We now restrict to the slices with $|J_{m_2}|\ge H^{1-\varepsilon}$.
If $(u,q)$ is admissible, then the hypothesis \eqref{eq:APeta} applies to $S_{m_2}$, and gives
\begin{equation}\label{eq:good-slice-bound}
|S_{m_2}|
\ll \theta^{-1}\cdot \frac{|J_{m_2}|}{q}
\ll \theta^{-1}\cdot \frac{H}{n_2}.
\end{equation}

It remains to control the number of inadmissible $m_2$.
Suppose that for some prime $p>\eta$ we have $p^2\mid\gcd(u,q)$.
If $p^2\mid n_1$, then $p^2\mid u$ forces $p^2\mid g(n_1)$, contrary to condition~\textup{(iii)}.
If instead $v_p(n_1)=1$, then $p^2\mid q=n_1|\Delta|$ forces $p\mid\Delta$, and hence $p\mid n_2$, contrary to $\gcd(n_1,n_2)\le\eta<p$.
Thus $p\nmid n_1$.
Consequently, $p^2\mid \gcd(u,q)$ implies $p^2\mid\Delta$ and $p^2\mid u$.
Using $u=n_1m_2+g(n_1)$, this forces
\begin{equation}\label{eq:bad-slice-congruence}
n_1m_2 \equiv -g(n_1)\pmod{p^2}.
\end{equation}
For each such $p$, the congruence \eqref{eq:bad-slice-congruence} specifies one residue class modulo $p^2$, because $p\nmid n_1$.
The possible values of $m_2$ lie in an interval of length $O(Hn_2)$.
The relevant primes satisfy $p^2\mid \Delta$, so $p^2\le |\Delta|\le x$; since $n_2>x/\eta$, we have
\[
\frac{Hn_2}{p^2}\ge \frac{H}{\eta}.
\]
Thus one residue class modulo $p^2$ contains $O(Hn_2/p^2)$ possible values of $m_2$.
Summing over the primes $p>\eta$ with $p^2\mid\Delta$ gives
\[
Hn_2 \sum_{\substack{p>\eta\\p^2\mid\Delta}}\frac1{p^2}
\ll \frac{Hn_2}{\eta}
\]
inadmissible values of $m_2$.

For these exceptional $m_2$, counting the terms of the progression and using \eqref{eq:slice-density} gives
\[
|S_{m_2}|\ll 1+\frac{H}{n_2}.
\]
Hence the total contribution from inadmissible $m_2$ is
\[
\ll \frac{H n_2}{\eta}\left(1+\frac{H}{n_2}\right)
\ll \frac{H n_2}{\eta}+\frac{H^2}{\eta}.
\]
Since $n_2\le x\le H$ and $\eta^{-1}=o(\theta^{-1})$, this contribution is $\ll \theta^{-1}H^2$ per pair.

There are $O(Hn_2)$ admissible values of $m_2$, so \eqref{eq:good-slice-bound} contributes $O(\theta^{-1}H^2)$ per pair.

Summing over the $\ll x^2$ pairs in $\mathcal N$ therefore contributes $\ll \theta^{-1}H^2x^2$ to \eqref{eq:second-moment-expanded}.

\noindent\emph{Off-diagonal pairs not in $\mathcal{N}$.} By Lemma~\ref{lem:pair-exception-count}, the off-diagonal pairs in $(\NN\cap[1,x])^2\setminus \mathcal N$ number $\ll \theta^{-1}x^2$.
For each such pair the bound $|f|\ll1$ gives $O(H^2)$ for the inner sum in \eqref{eq:second-moment-expanded}, and hence a total contribution $\ll \theta^{-1}H^2x^2$.

Combining the diagonal bound with the off-diagonal bounds completes the proof of \eqref{eq:transference-second-moment}.
\end{proof}

Recall the truncated root number $\rw(\cdot;\eta)$ from \eqref{eq:rw-eta-def}.
\begin{lemma}\label{lem:j0-trunc-AP}
For each fixed $d\ge2$ and every fixed $0<\varepsilon<1/18$, the function $n\mapsto\rw(n)-\rw(n;\eta)$ satisfies hypotheses \textup{(B)} and \textup{(AP)} of Theorem~\ref{thm:transference}.
\end{lemma}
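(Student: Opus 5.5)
The plan is to verify (B) directly and obtain (AP) from Proposition~\ref{prop:rw-interval} applied twice, at $z=\infty$ and at $z=\eta$, with Proposition~\ref{prop:diff} controlling the difference of the main terms.

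Hypothesis (B) is immediate. For $n\ne0$, both $\rw(n)$ and $\rw(n;\eta)$ lie in $\{\pm1\}$: we have $a(n;\eta)=\prod_{5\le p\le\eta}a(p^{v_p(n)})$ by \eqref{eq:a-eta-conv}, and the factors $\omega_2,\omega_3$ are signs. At $n=0$ both functions vanish. Hence $|\rw(n)-\rw(n;\eta)|\le2$, and this bound is uniform in $H$. Note also that $\rw(n;\eta)$ depends on $H$ only through $\eta$, which is allowed.

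For (AP), fix $1\le q\le x^d$ and a class $u\pmod q$ such that no prime $p>\eta$ satisfies $p^2\mid\gcd(u,q)$. Let $I\subset[-(d+3)Hx^d,(d+3)Hx^d]$ be an interval with $|I|\ge H^{1-\varepsilon}$. Applying Proposition~\ref{prop:rw-interval} with $z=\infty$ and with $z=\eta$, and subtracting, gives
\[
\sum_{\substack{n\in I\\n\equiv u\pmod q}}\bigl(\rw(n)-\rw(n;\eta)\bigr)
=\rho(u,q)\bigl(c(u,q)-c(u,q;\eta)\bigr)\frac{|I_+|-|I_-|}{q}
+O\bigl(\langle I\rangle^{1/2+\varepsilon'}\bigr).
\]
This holds uniformly in $q,u,I$, with $\varepsilon'>0$ at our disposal. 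We have $|\rho(u,q)|\le1$ and $\bigl||I_+|-|I_-|\bigr|\le|I|$. Proposition~\ref{prop:diff} applies precisely under our admissibility hypothesis together with $q\le x^d$, and it gives $|c(u,q)-c(u,q;\eta)|\ll\theta^{-1}$. So after multiplying by $q/|I|$, the main term contributes $O(\theta^{-1})$.

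The main obstacle is the error term after normalisation, namely
\[
\frac{q}{|I|}\langle I\rangle^{1/2+\varepsilon'}
\ll \frac{x^d\,(Hx^d)^{1/2+\varepsilon'}}{H^{1-\varepsilon}}.
\]
With $x=H^{1/(4d)}$ we have $x^d=H^{1/4}$, so this is
\[
H^{1/4+(5/4)(1/2+\varepsilon')-1+\varepsilon}
=H^{-1/8+\varepsilon+5\varepsilon'/4}.
\]
Since $\varepsilon<1/18<1/8$, choosing $\varepsilon'$ small in terms of $\varepsilon$ makes the exponent negative. The error is therefore a negative power of $H$, and in particular $o(\theta^{-1})$. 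I would check the exponent bookkeeping carefully, since the numerics here decide whether the range of $\varepsilon$ is sufficient; the stated range $\varepsilon<1/18$ leaves a margin for any cruder bound of $\langle I\rangle$. Combining the main-term and error-term bounds yields \eqref{eq:APeta} for $f=\rw-\rw(\cdot;\eta)$, with implied constants depending only on $d$ and $\varepsilon$.
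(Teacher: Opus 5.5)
Your proposal is correct and follows the paper's proof. You subtract the $z=\infty$ and $z=\eta$ cases of Proposition~\ref{prop:rw-interval}, bound the main term by Proposition~\ref{prop:diff}, and check that the normalised error is a negative power of $H$. The only difference is that you take the interval-estimate exponent $\varepsilon'$ independent of the $\varepsilon$ in (AP), which shows that $\varepsilon<1/8$ would already suffice; the paper takes $\varepsilon'=\varepsilon$, obtaining $x^{-d(1/2-9\varepsilon)}$ and hence the stated range $\varepsilon<1/18$.
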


\begin{proof}
Hypothesis \textup{(B)} is immediate.
Recall $c(u,q;z)$ and $\rho(u,q)$ from \eqref{eq:def-ac-trunc} and \eqref{eq:def-rho}, and the interval notation from \eqref{eq:interval-notation}.
Hypothesis \textup{(AP)} follows from Proposition~\ref{prop:rw-interval}, used with $z=\infty$ and $z=\eta$, together with Proposition~\ref{prop:diff}.
Subtracting the two formulas leaves the main term
\[
\rho(u,q)\bigl(c(u,q)-c(u,q;\eta)\bigr)
\frac{|I_+|-|I_-|}{q},
\]
which is $O(\theta^{-1}|I|/q)$ under the admissibility condition on $(u,q)$.
For the remaining error, apply the interval estimate with exponent $\varepsilon$.
Since $q\le x^d$, $|I|\ge H^{1-\varepsilon}$, and $H=x^{4d}$, its normalized contribution is
\[
\ll x^d\,\frac{(Hx^d)^{1/2+\varepsilon}}{H^{1-\varepsilon}}
=x^{-d(1/2-9\varepsilon)}=o(\theta^{-1}).
\]
This proves \textup{(AP)}.
\end{proof}

Recall $g_\cc$ and $S(H)$ from \eqref{eq:def-gc} and \eqref{eq:def-SH}.
\begin{corollary}\label{cor:pointwise-trunc}
There exists a subset $\mathcal E \subset S(H)$ with
\[
\#\mathcal E \ll \theta^{-\frac{1}{3}}\,H^{d+1},
\]
such that for every $\cc\in S(H)\setminus\mathcal E$, we have
\[
\left|
\sum_{1\le n\le x}
\Bigl(\rw\bigl(g_{\cc}(n)\bigr)-\rw\bigl(g_{\cc}(n); \eta\bigr)\Bigr)
\right|
\ll \theta^{-\frac{1}{3}}\,x.
\]
\end{corollary}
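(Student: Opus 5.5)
The plan is to combine Theorem~\ref{thm:transference}, applied to $f(n)=\rw(n)-\rw(n;\eta)$ (admissible by Lemma~\ref{lem:j0-trunc-AP}), with a Chebyshev (Markov) argument over the coefficient box. The key observation is that the map $\cc\mapsto(c_1,c_2,\tilde g)$ splits $S(H)$ into fibres. Here $\tilde g(t)=c_0+\sum_{i\ge3}c_it^i$ ranges over polynomials of degree at most $d$ with coefficients in $[-H,H]$ and $\tilde g(0)=c_0\ne0$. With $a=c_1$ and $b=c_2$ we then have $g_\cc(n)=an+bn^2+\tilde g(n)$, which is exactly the shape treated in \eqref{eq:transference-second-moment}.

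\textbf{Step 1.} Fix $\varepsilon\in(0,1/18)$, say $\varepsilon=1/20$. For each admissible $\tilde g$, Theorem~\ref{thm:transference} gives
\[
\sum_{|a|,|b|\le H}\Bigl|\sum_{1\le n\le x}f\bigl(an+bn^2+\tilde g(n)\bigr)\Bigr|^2\ll\theta^{-1}H^2x^2,
\]
with an implied constant uniform in $\tilde g$. We need to check that $f$ is allowed to depend on $H$ through $\eta$, which the theorem permits since the constants in (B) and (AP) are uniform. There are $\ll H^{d-1}$ choices of $\tilde g$ (namely $c_0$ and $c_3,\dots,c_d$), so summing over them yields
\[
\sum_{\cc\in S(H)}\Bigl|\sum_{1\le n\le x}f(g_\cc(n))\Bigr|^2\ll\theta^{-1}H^{d+1}x^2.
\]

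\textbf{Step 2.} Let $\mathcal E$ be the set of $\cc\in S(H)$ for which the inner sum exceeds $\theta^{-1/3}x$ in absolute value. Markov's inequality then gives
\[
\#\mathcal E\cdot\theta^{-2/3}x^2\ll\theta^{-1}H^{d+1}x^2,
\qquad\text{so}\qquad
\#\mathcal E\ll\theta^{-1/3}H^{d+1}.
\]
For $\cc\notin\mathcal E$ the desired bound holds by definition.

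The only point needing care is the bookkeeping for the degree $d=2$ case. There $\tilde g$ is just the constant $c_0$, which still satisfies $\deg\le d$ and $\tilde g(0)\ne0$, so the argument is unaffected. I expect no real obstacle: the substantive work is already contained in Theorem~\ref{thm:transference} and Lemma~\ref{lem:j0-trunc-AP}. The remaining thing to confirm is that $x=H^{1/(4d)}$ and $H\ge H_0(d,\varepsilon)$ match the hypotheses of that theorem, which they do by construction.
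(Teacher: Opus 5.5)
Your proposal is correct and follows the paper's proof. Like the paper, you fix the coefficients $c_i$ with $i\notin\{1,2\}$, apply Theorem~\ref{thm:transference} (via Lemma~\ref{lem:j0-trunc-AP}) with $g(n)=\sum_{i\notin\{1,2\}}c_in^i$, sum over the $O(H^{d-1})$ fibres, and conclude by Chebyshev's inequality with threshold $\theta^{-1/3}x$.
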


\begin{proof}
Apply Theorem~\ref{thm:transference} and Lemma~\ref{lem:j0-trunc-AP}.
Fix the coefficients $c_i$ with $i\notin\{1,2\}$ and apply Theorem~\ref{thm:transference} with
\[
g(n)=\sum_{\substack{0\le i\le d\\i\notin\{1,2\}}}c_i n^i.
\]
Its constant term is $c_0\ne0$.
Summing the resulting estimate over the $O(H^{d-1})$ choices of the fixed coefficients gives
\[
\sum_{\cc\in S(H)}
\left|\sum_{1\le n\le x}
\bigl(\rw(g_\cc(n))-\rw(g_\cc(n);\eta)\bigr)\right|^{2}
\ll \theta^{-1}\,H^{d+1}\,x^{2}.
\]
Chebyshev's inequality, with threshold $\theta^{-1/3}x$, now gives
\[
\#\mathcal E
\ll \theta^{-1/3}H^{d+1},
\]
and the asserted estimate holds outside $\mathcal E$.
\end{proof}

\section{Root numbers at polynomial values}\label{sec:poly-sum}

Let $d\ge 2$ and take $H$ sufficiently large in terms of $d$.
We recall the parameters
\[
x=H^{1/(4d)},\quad 
\eta=\frac{\log x}{\log\log x},\quad
\theta = \frac{\log \log x}{\log \log \log x}.
\]
The proof separates the primes $p\ge 5$ from the primes $2$ and $3$.
The quantities $\nu_\cc(q)$ control how often $q$ divides $g_\cc(n)$ and lead to the local factors $\kappa_p(\cc)$ for $p\ge 5$, while $\kappa_{2,3}(\cc)$ collects the averaged contribution of the $2$- and $3$-adic signs.
The truncated asymptotic is obtained by combining these two pieces.

\subsection{Generic coefficient vectors}

Recall $S(H)$ and the polynomial $g_\cc$ from \eqref{eq:def-SH} and \eqref{eq:def-gc}.
We write $\cont(g_\cc)$ for its content and $\Delta(g_\cc)$ for its discriminant.
For $q\ge 1$ we define
\begin{equation}\label{eq:def-nu}
\nu_{\cc}(q)=\#\{u\bmod q:g_\cc(u)\equiv0\pmod q\},
\end{equation}
which is multiplicative in $q$.
The following are the standard bounds for $\nu_\cc(q)$.
Parts~(1) and (4) are \cite[Lemma~6.1]{BST}.
Part~(2) follows from the stronger bound there, since
$v_p(\cont(g_\cc))\le v_p(\Delta(g_\cc))$, and part~(3) is \cite[Eq.~(43)]{Ste}.

\begin{lemma}\label{lem:nu-bounds}
Assume that $g_\cc$ is separable of degree $d\ge 2$.
\begin{enumerate}
\item If $v_p(\cont(g_\cc))\ge k$ then $\nu_\cc(p^k)=p^k$.
\item If $v_p(\cont(g_\cc))<k$ then $\nu_\cc(p^k)\le d\,p^{k(1-1/d)+v_p(\Delta(g_\cc))/d}$.
\item If $p\nmid \cont(g_\cc)$, then $\nu_\cc(p^k)\le 2p^{v_p(\Delta(g_\cc))/2}+d-2$.
\item If $p\nmid \Delta(g_\cc)$, then $\nu_\cc(p^k)=\nu_\cc(p)\le d$.
\end{enumerate}
\end{lemma}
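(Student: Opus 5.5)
We prove the four bounds of Lemma~\ref{lem:nu-bounds} in order: parts (1) and (4) by elementary arguments, and parts (2) and (3) by reduction to known results. By multiplicativity everything reduces to a single prime power $p^k$, so fix $p$ and $k\ge1$ throughout.

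Part (1) is immediate. If $p^k\mid\cont(g_\cc)$, then every coefficient of $g_\cc$ vanishes modulo $p^k$. Hence $g_\cc(u)\equiv0\pmod{p^k}$ for all $p^k$ residues $u$.

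Part (4) follows from Hensel's lemma. Suppose $p\nmid\Delta(g_\cc)$. Then $p$ does not divide the leading coefficient $c_d$: otherwise, via the resultant expression for the discriminant, $p$ divides $\Delta$. So $\bar g_\cc$ has degree $d$ over $\FF_p$. It is also separable, since the reduction of the discriminant is nonzero. Every root modulo $p$ is therefore simple and lifts uniquely to each $p^k$, giving $\nu_\cc(p^k)=\nu_\cc(p)\le d$.

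Part (2) we take from the uniform bound of \cite[Lemma~6.1]{BST}, i.e.\ Stewart's bound. That bound reads
\[
\nu_\cc(p^k)\le d\,p^{k(1-1/d)+(v_p(\Delta)-v_p(\cont))/d}
\]
or a similar expression involving the content-normalised discriminant, whenever $v_p(\cont)<k$. We then drop the negative content term using $v_p(\cont(g_\cc))\le v_p(\Delta(g_\cc))$. This inequality holds because $\Delta$ is a homogeneous form of degree $2d-2\ge1$ in the coefficients, so $\cont^{2d-2}\mid\Delta$.

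Part (3) is \cite[Eq.~(43)]{Ste}. The underlying mechanism is to partition the roots modulo $p^k$ according to their reduction modulo $p$. A simple root modulo $p$ contributes at most one lift, and there are at most $d-2$ further simple clusters. The remaining root clusters are controlled through $v_p(g'(\alpha))$: the total multiplicity is bounded via $v_p(\Delta)=\sum v_p(\alpha_i-\alpha_j)$, and this gives at most $2p^{v_p(\Delta)/2}$ lifts.

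The only genuinely delicate point is matching the normalisation in \cite[Lemma~6.1]{BST} to deduce (2). If that lemma is stated for primitive polynomials, we first write $g_\cc=p^{e}g'$ with $e=v_p(\cont)<k$. This gives $\nu_\cc(p^k)=p^{e}\nu_{g'}(p^{k-e})$. We then apply the primitive bound to $g'$, and check that the exponents combine, using $\Delta(g_\cc)=p^{e(2d-2)}\Delta(g')$, to at most the stated right-hand side.
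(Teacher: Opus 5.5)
\textbf{Gap in part (4).} Parts (1), (2) and (3) follow the paper. The paper simply quotes \cite[Lemma~6.1]{BST} for (1), (2) and (4), and \cite[Eq.~(43)]{Ste} for (3). It bridges to (2) with $v_p(\cont(g_\cc))\le v_p(\Delta(g_\cc))$, exactly as you do.

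The problem is your own proof of (4): the claim that $p\nmid\Delta(g_\cc)$ forces $p\nmid c_d$ is false. For odd $p$, the polynomial $g(t)=pt^2+t+1$ has $\Delta(g)=1-4p$, which is prime to $p$. The resultant identity is $\mathrm{Res}(g,g')=\pm c_d\Delta(g)$, so $p\mid c_d$ gives $p\mid\mathrm{Res}(g,g')$, not $p\mid\Delta(g)$. In fact $\Delta_d(0,c_{d-1},\ldots,c_0)=c_{d-1}^2\Delta_{d-1}(c_{d-1},\ldots,c_0)$.

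This case matters for the paper. In Proposition~\ref{prop:local-product-tail}, part (4) is applied at primes $p>\eta$, and these may divide $c_d$.

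\textbf{How to fix it.} The conclusion of (4) is still true, but the argument must not rely on $\deg\overline{g_\cc}=d$. What you need is that $p\nmid\Delta(g_\cc)$ makes every root of $g_\cc$ modulo $p$ simple. The discriminant is invariant under $t\mapsto t+u$, and $g_\cc(t+u)$ has constant and linear coefficients $g_\cc(u)$ and $g_\cc'(u)$. Moreover $\Delta$ lies in the ideal of $\ZZ[c_0,\ldots,c_d]$ generated by $c_0,c_1$, since a polynomial with a double root at $0$ has zero discriminant. Hence $p\mid g_\cc(u)$ and $p\mid g_\cc'(u)$ would give $p\mid\Delta(g_\cc)$.

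Hensel's lemma then gives $\nu_\cc(p^k)=\nu_\cc(p)$. Finally $\nu_\cc(p)\le d$ because $\overline{g_\cc}\ne0$, which follows from $\cont(g_\cc)^{2d-2}\mid\Delta(g_\cc)$. Alternatively, the identity above shows that when $p\mid c_d$ the reduction $\overline{g_\cc}$ is separable of degree $d-1$.

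\textbf{A smaller point in part (2).} If the cited bound really had exponent $k(1-1/d)+(v_p(\Delta)-v_p(\cont))/d$, discarding $-v_p(\cont)/d$ would need no inequality at all. The inequality is needed for a different reason. The scaling $\nu_\cc(p^k)=p^e\nu_{g'}(p^{k-e})$, combined with the primitive bound $\nu_{g'}(p^j)\le d\,p^{j(1-1/d)}$, gives $d\,p^{k(1-1/d)+e/d}$, which has $+v_p(\cont)/d$ in the exponent. This is the ``stronger bound'' the paper refers to. $v_p(\cont)\le v_p(\Delta)$ is then what converts it into (2). Your last paragraph carries this out correctly once that primitive bound is made explicit.
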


Define
\begin{equation}\label{eq:def-generic-arith}
\begin{aligned}
\mathcal G_{\mathrm{arith}}(H)=\bigl\{\cc\in S(H):{}&c_d\ne0,
\ \Delta(g_\cc)\ne0,\ \cont(g_\cc)\le\theta,\\
&v_p(\Delta(g_\cc))\le\theta\ \text{for every }p\le\eta\bigr\}.
\end{aligned}
\end{equation}
Whenever $\cc\in S(H)$ and $c_d\Delta(g_\cc)\ne0$, let $L_\cc/\QQ$ be the splitting field of $g_\cc$.
After labelling its $d$ roots, regard $\Gal(L_\cc/\QQ)$ as a subgroup of $S_d$.
The generic sets used for the two families are
\begin{equation}\label{eq:def-Gfamilies}
\begin{aligned}
\mathcal G_0(H)
&=\bigl\{\cc\in\mathcal G_{\mathrm{arith}}(H):
  \Gal(L_\cc/\QQ)=S_d,\ \QQ(\sqrt{-3})\not\subset L_\cc\bigr\},\\
\mathcal G_{1728}(H)
&=\bigl\{\cc\in\mathcal G_{\mathrm{arith}}(H):
  \Gal(L_\cc/\QQ)=S_d,\ \QQ(i)\not\subset L_\cc\bigr\}.
\end{aligned}
\end{equation}
Membership in either set implies that $g_\cc$ is irreducible over $\QQ$; in particular, $g_\cc(n)\ne0$ for every $n\in\ZZ$.

\begin{lemma}\label{lem:generic-families}
For $j\in\{0,1728\}$,
\[
\#\bigl(S(H)\setminus\mathcal G_j(H)\bigr)
\ll\theta^{-d}H^{d+1}.
\]
\end{lemma}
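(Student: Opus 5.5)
We bound the complement $S(H)\setminus\mathcal G_j(H)$ by a union bound over the defining conditions, each contributing $O(\theta^{-d}H^{d+1})$ or less.

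\emph{Easy conditions.} Vectors with $c_d=0$, or with $\Delta(g_\cc)=0$, lie on proper hypersurfaces in the coefficient box. Since $\Delta$ is a nonzero polynomial in $\cc$ of bounded degree, each contributes $O(H^d)$. Vectors with $\cont(g_\cc)>\theta$ are counted by summing over $k>\theta$ the vectors divisible by $k$, which gives
\[
\ll H^{d+1}\sum_{k>\theta}k^{-d-1}\ll\theta^{-d}H^{d+1}.
\]
This is the term that dictates the exponent $\theta^{-d}$.

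\emph{Galois condition.} For the Galois condition we invoke a standard large-sieve count, such as Gallagher's bound or the sharper results of Bhargava. These show that the number of $\cc$ with $|\cc|\le H$ and Galois group smaller than $S_d$ is $O(H^{d+1/2}\log H)$, or better, which is negligible. When the group is $S_d$ with $d\ge2$, the unique quadratic subfield of $L_\cc$ is $\QQ(\sqrt{\Delta(g_\cc)})$, because $A_d$ is the only index-$2$ subgroup. So $\QQ(\sqrt{-3})\subset L_\cc$, respectively $\QQ(i)\subset L_\cc$, forces $-3\Delta(g_\cc)$, respectively $-\Delta(g_\cc)$, to be a perfect square. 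We bound these vectors either by a sieve argument, since $\Delta$ must be a square times a fixed constant modulo many primes and the local density is about $1/2$ per prime, or by citing counts of polynomials whose discriminant is a fixed constant times a square. Either route gives $O(H^{d+1-\delta})$ for some $\delta>0$, which is negligible. This is the step I expect to be the main obstacle in citation terms: we must check that an available result covers "discriminant in a fixed square class" uniformly in $d\ge2$. The fallback is a direct large sieve with squares as the sifted set.

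\emph{Valuation condition.} The main technical step is the condition $v_p(\Delta(g_\cc))\le\theta$ for every $p\le\eta$. For each prime $p\le\eta$, we bound the proportion of $\cc$ modulo $p^{k}$, with $k=\lfloor\theta\rfloor+1$, satisfying $p^k\mid\Delta(g_\cc)$. Since $\Delta$ is a nonzero polynomial, its $p$-adic zero set has measure zero, but we need a quantitative rate. We use the known estimate that the density of $\cc\in\ZZ_p^{d+1}$ with $v_p(\Delta)\ge k$ is $\ll_d p^{-ck}$ for some $c>0$; with $c=1/2$ this follows, for example, from Igusa-type bounds or from counting root configurations. Reducing modulo $p^k$ and noting $p^k\le H$ (since $p^k\le\eta^{\theta+1}=x^{o(1)}$), we get
\[
\ll H^{d+1}\sum_{p\le\eta}p^{-ck}\ll H^{d+1}2^{-c\theta},
\]
which is far smaller than $\theta^{-d}H^{d+1}$.

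Summing the four contributions gives the stated bound $\#\bigl(S(H)\setminus\mathcal G_j(H)\bigr)\ll\theta^{-d}H^{d+1}$.
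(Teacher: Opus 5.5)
Your decomposition is the paper's: its sets $E_1,\dots,E_5$ are exactly your cases ($c_d=0$ or Galois group smaller than $S_d$; $\Delta=0$; content $>\theta$; $v_p(\Delta)>\theta$ for some $p\le\eta$; $\Gal=S_d$ but $\QQ(\sqrt D)\subset L_\cc$). The content term is indeed what fixes the exponent $\theta^{-d}$. The reduction of the quadratic-subfield condition to $\Delta(g_\cc)=Dm^2$, via the unique index-$2$ subgroup $A_d$, is also the paper's argument.

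The differences lie in the inputs for the two harder sets.

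For $\Delta(g_\cc)\in D\cdot\ZZ^2$, the paper applies \cite[Theorem~1.2]{BCSSV} to the weighted cover $T^2=D\Delta(g_\cc)$ and obtains $H^{d-1+\sqrt2}(\log H)^{O(1)}$. Your Gallagher-type large sieve in $\ZZ^{d+1}$ would remove, modulo each good prime $p\le H^{1/2}$, the classes with $\bigl(\frac{D\Delta(\cc)}{p}\bigr)=-1$. This gives $O(H^{d+1/2}\log H)$, which is also ample and more elementary. However, the claim that these classes have density about $1/2$ is where the real content sits, and you leave it unjustified. It requires that $D\Delta$ is not a constant times a square in $\overline{\FF}_p[\cc]$, i.e.\ that $T^2-D\Delta(\cc)$ is absolutely irreducible modulo all large $p$. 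You should derive this from the irreducibility of the discriminant \cite[Example~1.4]{GKZ}, which is the same input the paper feeds into BCSSV, and then apply Lang--Weil.

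For the valuation condition, the exponent $c=1/2$ you attribute to ``Igusa-type bounds'' is more than you need. Its uniformity in $p$ is also not an off-the-shelf statement. Any fixed $c>0$ uniform in $p$ gives $2^{-c\theta}\ll\theta^{-d}$. The paper therefore just uses the crude bound $p^{k(d+1)-k/(2d-2)}$ for zeros modulo $p^k$ of a homogeneous form of degree $2d-2$ \cite[Lemma~4.10]{PSW}, and you should do the same.

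Finally, for the Galois-group count, $c_d$ varies here, so Bhargava's monic van der Waerden theorem is not the right citation. Gallagher's large-sieve argument does run verbatim in the full coefficient box; the paper instead cites \cite[Corollary~1]{CD}.
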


\begin{proof}
Fix $j\in\{0,1728\}$, and let $K$ be the quadratic field excluded in the definition of $\mathcal G_j(H)$.
Thus $K=\QQ(\sqrt{-3})$ if $j=0$, and $K=\QQ(i)$ if $j=1728$.
Write $K=\QQ(\sqrt D)$, with $D=-3$ or $D=-1$, respectively.
Define
\[
\begin{aligned}
E_1&=\{\cc\in S(H):c_d=0\ \text{or}\
   (c_d\Delta(g_\cc)\ne0\ \text{and}\ \Gal(L_\cc/\QQ)\ne S_d)\},\\
E_2&=\{\cc\in S(H):\Delta(g_\cc)=0\},\\
E_3&=\{\cc\in S(H):\cont(g_\cc)>\theta\},\\
E_4&=\{\cc\in S(H):\exists\,p\le\eta
   \text{ with }v_p(\Delta(g_\cc))>\theta\},\\
E_5&=\{\cc\in S(H):c_d\Delta(g_\cc)\ne0,\
   \Gal(L_\cc/\QQ)=S_d,\ K\subset L_\cc\}.
\end{aligned}
\]
Then $S(H)\setminus\mathcal G_j(H)\subseteq E_1\cup\cdots\cup E_5$.

Apply \cite[Corollary~1]{CD} to
\[
\mathcal G(X;T_0,\ldots,T_d)=T_dX^d+\cdots+T_1X+T_0,
\]
whose generic Galois group is $S_d$.
Applying that result to the finitely many conjugacy classes of proper subgroups of $S_d$ gives
\[
\#\{\cc\in E_1:c_d\ne0\}\ll H^{d+1/2+\varepsilon}.
\]
Together with the $O(H^d)$ vectors for which $c_d=0$, this is $O(\theta^{-d}H^{d+1})$ on taking, say, $\varepsilon=1/4$.
The universal discriminant is a nonzero polynomial in $d+1$ variables, so
\[
\#E_2\ll H^d.
\]
Moreover,
\[
\#E_3
\ll\sum_{\theta<m\le H}\left(\frac Hm+1\right)^{d+1}
\ll\theta^{-d}H^{d+1}.
\]

For $E_4$, put $D_0=2d-2$ and $k_0=\lfloor\theta\rfloor+1$.
Since the universal discriminant is homogeneous of degree $D_0$, \cite[Lemma~4.10]{PSW} gives
\[
N_{p,k}:=\#\{\cc\pmod{p^k}:p^k\mid\Delta(g_\cc)\}
\ll p^{k(d+1)-k/D_0}.
\]
When $p^k\le H$, it follows that
\[
\#\{\cc\in S(H):p^k\mid\Delta(g_\cc)\}
\ll H^{d+1}p^{-k/D_0}.
\]
For sufficiently large $H$, we have $p^{k_0}\le H$ for every $p\le\eta$.
Therefore
\[
\#E_4
\ll H^{d+1}\sum_{p\le\eta}p^{-k_0/D_0}
\ll H^{d+1}2^{-k_0/D_0}
\ll\theta^{-d}H^{d+1}.
\]

It remains to bound $E_5$.
Since $S_d$ has a unique subgroup of index $2$, the unique quadratic subfield of $L_\cc$ is $\QQ(\sqrt{\Delta(g_\cc)})$.
Thus, for $\cc\in E_5$,
\[
\Delta(g_\cc)=Dr^2\quad\text{for some }r\in\QQ.
\]
Writing $r=a/b$ in lowest terms gives $b^2\mid |D|$, hence $b=1$.
Consequently $\Delta(g_\cc)=Dm^2$ for some $m\in\ZZ$.
Set
\[
\Phi(T,\cc)=T^2-D\Delta(g_\cc).
\]
This polynomial is absolutely irreducible.
By \cite[Example~1.4]{GKZ}, $\Delta(g_\cc)$ is irreducible in $\CC[\cc]$.
Eisenstein's criterion at $\Delta(g_\cc)$ therefore shows that $\Phi$ is irreducible in $\CC[T,\cc]$.
Give $T$ weight $d-1$ and each coefficient variable weight $1$.
In the notation of \cite[Theorem~1.2]{BCSSV}, there are $n=d+1$ coefficient variables, the cover degree is $2$, $e=d-1$, $F_0=0$, and the weighted degree is $2(d-1)$.
Recall that a weighted homogeneous polynomial is called full in \cite{BCSSV} if the least common multiple of its weights divides its weighted degree.
Here $\operatorname{lcm}(d-1,1,\ldots,1)=d-1$ divides $2(d-1)$, so $\Phi$ is full; its weighted top part is the absolutely irreducible polynomial $\Phi$ itself.
The theorem therefore applies.
For $\Delta(g_\cc)=Dm^2$ we may take $T=Dm$, and $|T|\ll H^{d-1}$ because $|\Delta(g_\cc)|\ll H^{2d-2}$.
The theorem therefore yields
\[
\#E_5
\ll H^{d-1+\sqrt2}(\log H)^{O(1)}
\ll\theta^{-d}H^{d+1}.
\]
Both the implied constant and the exponent in $(\log H)^{O(1)}$ are uniform for fixed $d$: the parameters $n=d+1$, cover degree $2$, weight $e=d-1$, and coefficient height of $\Phi$ in \cite[Theorem~1.2]{BCSSV} are then bounded in terms of $d$, uniformly for $D\in\{-3,-1\}$.
The exponent is $(d+1)-2+2/\sqrt2=d-1+\sqrt2$.
Combining the five bounds proves the lemma.
\end{proof}

\subsection{The large prime Euler product}

Recall from Section~\ref{sec:average} that the multiplicative function $a$ defined in \eqref{eq:def-a} satisfies $a=1*h$, with $h$ as in \eqref{eq:a-conv}.
Thus $h$ is supported on squarefull integers, $h(p)=0$, and $|h(p^r)|\le2$ for $r\ge2$.
We shall use the finite large prime factor $a(m;\eta)$ and its convolution expansion from \eqref{eq:a-eta-conv}; here $P^+$ is defined in \eqref{eq:def-Pplus}.

For a prime $p$ and $r\ge0$, define
\begin{equation}\label{eq:def-xi}
\xi_\cc(p^r)=\frac{\nu_\cc(p^r)}{p^r},
\qquad
\mu_{p,r}(\cc)=\xi_\cc(p^r)-\xi_\cc(p^{r+1}).
\end{equation}
If $\cc\in\mathcal G_{\mathrm{arith}}(H)$, then reduction modulo $p^r$ and Lemma~\ref{lem:nu-bounds}(2) give
\[
\mu_{p,r}(\cc)\ge0,
\qquad
\sum_{r\ge0}\mu_{p,r}(\cc)=1.
\]

\begin{proposition}\label{prop:squarefull-polynomial-values}
Let $A:\NN\to\{\pm1\}$ be multiplicative, with $A(p)=1$ for every prime $p\ge5$ and $A(p^r)=1$ for $p\in\{2,3\}$ and $r\ge1$.
Put
\begin{equation}\label{eq:def-KAp}
h_A=A*\mu,\quad
K_{A,p}(\cc)=1+\sum_{r\ge2}h_A(p^r)\xi_\cc(p^r),
\end{equation}
and let $\cc\in\mathcal G_{\mathrm{arith}}(H)$.
Then, for every $\varepsilon>0$,
\[
\sum_{1\le n\le x}A\bigl(g_\cc(n);\eta\bigr)
=\Bigl(\prod_{5\le p\le\eta}K_{A,p}(\cc)\Bigr)x
+O(x^\varepsilon),
\]
uniformly in $A$ and $\cc$.
\end{proposition}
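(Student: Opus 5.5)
Expand the truncated function by its divisor-sum definition, exactly as in \eqref{eq:a-eta-conv}: for $m\ne0$,
\[
A(m;\eta)=\sum_{\substack{q\mid m\\P^+(q)\le\eta}}h_A(q).
\]
Since $\cc\in\mathcal G_{\mathrm{arith}}(H)$ implies $g_\cc$ is separable with $c_0\ne0$ and $c_d\ne0$, only finitely many $n$ give $g_\cc(n)=0$ (at most $d$), and these contribute $O(1)$. Swapping sums gives
\[
\sum_{n\le x}A(g_\cc(n);\eta)=\sum_{\substack{q\ge1\\P^+(q)\le\eta}}h_A(q)\,\#\{n\le x:q\mid g_\cc(n)\}+O(1).
\]
By Lemma~\ref{lem:squarefull-kernel}(1), $h_A$ is supported on squarefull $q$ coprime to $6$ with $|h_A(p^r)|\le2$. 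For each $q$ the count equals $\nu_\cc(q)x/q+O(\nu_\cc(q))$. Summing the main terms over all squarefull $\eta$-smooth $q$ gives, by multiplicativity of $\nu_\cc$ and $h_A$, the Euler product $\prod_{5\le p\le\eta}\bigl(1+\sum_{r\ge2}h_A(p^r)\xi_\cc(p^r)\bigr)x=\prod K_{A,p}(\cc)\,x$. This is provided the sums converge absolutely, which follows from Lemma~\ref{lem:nu-bounds}(2) or (3). There are only boundedly many primes $p\le\eta$ to handle, and for $p\nmid\cont(g_\cc)$ the quantity $\nu_\cc(p^r)$ is bounded, so $\xi_\cc(p^r)\ll p^{-r}p^{v_p(\Delta)/2}$ decays geometrically.

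The main obstacle is the range of $q$: the sum over $\eta$-smooth squarefull $q$ is infinite, and the error $O(\nu_\cc(q))$ cannot be summed over all of them. I would therefore split at $q\le x^{1/2}$ (say $q\le x^{\varepsilon}$-scale cutoffs as needed). For small $q$, the error terms total $\sum_{q\le Q}\nu_\cc(q)$. Using $\nu_\cc(q)\le\prod_{p^r\|q}(2p^{v_p(\Delta)/2}+d)$ when $p\nmid\cont$, and $\nu_\cc(p^r)\le p^{\min(r,v_p(\cont))}\cdot(\cdots)$ otherwise, together with $\cont\le\theta$ and $v_p(\Delta)\le\theta$ for $p\le\eta$, we get $\nu_\cc(q)\ll q^{1/2}\cdot e^{O(\theta\log\eta\cdot\pi(\eta))}$ in crude form. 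That bound is too weak, so instead I would use the structure of $\eta$-smooth numbers. The number of $\eta$-smooth squarefull $q\le Q$ is at most $(\log Q)^{\pi(\eta)}$, and $\pi(\eta)\log\log x\ll\log x/\log\log x$ gives $x^{o(1)}$. The total $\prod_{p\le\eta}$ of the $p^{v_p(\Delta)}$ and $\cont$ factors contributes at most $\eta^{\theta\pi(\eta)}$, which I need to check is $x^{o(1)}$. Since $\theta\pi(\eta)\log\eta\approx\theta\log x/\log\log x=\log x/\log\log\log x=o(\log x)$, it is, and this is the key numerical check. For the tail $q>Q$, I would bound $\#\{n\le x:q\mid g_\cc(n)\}\le \nu_\cc(q)(x/q+1)$ but count differently: $q\mid g_\cc(n)$ with $|g_\cc(n)|\le (d+1)Hx^d$ means $q$ is a squarefull $\eta$-smooth divisor of a nonzero integer of size $x^{O(d)}$. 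Each $n$ has at most $(\log x)^{O(\pi(\eta))}=x^{o(1)}$ such divisors, so the tail contributes at most $x^{o(1)}$ times the number of $n$ having a squarefull $\eta$-smooth divisor exceeding $Q$. For this I would use a Rankin-type bound $\sum_{q>Q}\nu_\cc(q)/q\le Q^{-\sigma}\sum_q \nu_\cc(q)q^{\sigma-1}$ with $\sigma$ small. The Euler factors for $p\le\eta$ give at most $x^{o(1)}$ by the same $\theta$-bookkeeping, so the tail is $x^{1-\sigma/2+o(1)}$ for $Q=x^{1/2}$. The tail of the main term is bounded the same way.

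Collecting terms, the main term is $\prod K_{A,p}(\cc)x$, with errors $x^{1/2+o(1)}$ from small $q$ and $x^{1-c+o(1)}$ from the tail. This is $O(x^{1-c})$ rather than $O(x^\varepsilon)$ unless the cutoff is optimized. To reach $x^{\varepsilon}$ I would instead note that every $\eta$-smooth squarefull $q$ dividing some $g_\cc(n)$ with $n\le x$ has size $x^{O(d)}$. I would then avoid the $x/q$ approximation for large $q$ by observing that, for $q>x$, the contribution is at most $\nu_\cc(q)$ per $q$ both in the count and in the main term, and sum $\nu_\cc(q)$ over smooth $q\le x^{O(d)}$, which is $x^{o(1)}$ by the smooth-number count above. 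All error terms are then $x^{o(1)}\le x^{\varepsilon}$, uniformly in $A$ (only $|h_A|\le2$ is used) and in $\cc\in\mathcal G_{\mathrm{arith}}(H)$.
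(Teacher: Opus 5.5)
Your final argument is essentially the paper's proof: expand over $\eta$-smooth squarefull $q$, note that $q\mid g_\cc(n)\ne0$ forces $q\le Z\asymp x^{5d}$, and bound the error $\sum_{q\le Z}|h_A(q)|\nu_\cc(q)$ by $x^{o(1)}$ using the uniform bound $\nu_\cc(p^r)\ll\theta p^{\theta/2}$ together with the smooth-number count. The tail $q>Z$ of the Euler product is then handled by Rankin's trick, where the exponent must not be too small; with $\sigma=1/2$, for example, the tail is $xZ^{-1/2+o(1)}\ll1$. The only slip is your $O(1)$ for the integer roots of $g_\cc$, which $\mathcal G_{\mathrm{arith}}(H)$ does not exclude: every $q$ divides $0$, so these $n$ must be removed before expanding, or, as in the paper, reinserted only after truncating at $Z$, at a cost of $\sum_{q\le Z}|h_A(q)|\ll x^{\varepsilon}$.
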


\begin{proof}
Put
\begin{equation}\label{eq:height-cutoff}
Z=2dHx^d\asymp Hx^d=x^{5d}.
\end{equation}
Omit the at most $d$ integers $n$ for which $g_\cc(n)=0$.
On the remaining terms the convolution formula in Lemma~\ref{lem:squarefull-kernel} gives
\[
A(g_\cc(n);\eta)
=\sum_{\substack{q\mid g_\cc(n)\\P^+(q)\le\eta}}h_A(q).
\]
Including the integer roots temporarily in the congruence counts and then subtracting their contribution gives
\begin{align}\label{eq:trunc-main-err}
\sum_{1\le n\le x}A(g_\cc(n);\eta)
&=x\sum_{\substack{q\ge1\\P^+(q)\le\eta}}
\frac{h_A(q)\nu_\cc(q)}q-\mathcal T_A(Z;\cc)
+O\bigl(\mathcal M_A(Z;\cc)+x^\varepsilon\bigr),
\end{align}
where
\begin{equation}\label{eq:def-trunc-errors}
\begin{aligned}
\mathcal T_A(Z;\cc)
&=x\sum_{\substack{q>Z\\P^+(q)\le\eta}}
\frac{h_A(q)\nu_\cc(q)}q,\quad
\mathcal M_A(Z;\cc)=
\sum_{\substack{q\le Z\\P^+(q)\le\eta}}
|h_A(q)|\nu_\cc(q).
\end{aligned}
\end{equation}
Indeed, before the integer roots are removed, periodicity gives $x\nu_\cc(q)/q+O(\nu_\cc(q))$.
The Euler product $\prod_{p\le\eta}(1+2\lfloor\log Z/\log p\rfloor)$ gives
\[
\sum_{\substack{q\le Z\\P^+(q)\le\eta}}|h_A(q)|
\ll x^\varepsilon,
\]
so the integer roots contribute $O(x^\varepsilon)$.

Since $h_A$ and $\nu_\cc$ are multiplicative, Lemma~\ref{lem:nu-bounds}(2) shows that the local series below are absolutely convergent.
Hence
\begin{equation}\label{eq:trunc-euler-prod}
\sum_{\substack{q\ge1\\P^+(q)\le\eta}}
\frac{h_A(q)\nu_\cc(q)}q
=\prod_{p\le\eta}
\left(1+\sum_{r\ge2}h_A(p^r)\xi_\cc(p^r)\right)
=\prod_{5\le p\le\eta}K_{A,p}(\cc).
\end{equation}
The last equality uses $h_A(p^r)=0$ for $p\in\{2,3\}$.

We next record the uniform bound for $\nu_\cc(p^r)$ used in both error terms.
Fix $p\le\eta$ and put $v=v_p(\cont(g_\cc))$.
Since $\cont(g_\cc)\le\theta$, we have $p^v\le\theta$.
If $r\le v$, then $\nu_\cc(p^r)=p^r\le\theta$.
If $r>v$, write $g_\cc=p^v g_1$ and use Lemma~\ref{lem:nu-bounds}(3), together with
\[
\nu_\cc(p^r)=p^v\nu_{g_1}(p^{r-v}),\quad
\Delta(g_\cc)=p^{v(2d-2)}\Delta(g_1),\quad
v_p(\Delta(g_\cc))\le\theta.
\]
It follows that, for every $r\ge0$,
\begin{equation}\label{eq:nu-smallp-bound}
\nu_\cc(p^r)\ll\theta p^{\theta/2}.
\end{equation}

Let $R_p=\lfloor\log Z/\log p\rfloor$.
The bounds $h_A(p)=0$ and $|h_A(p^r)|\le2$ give
\[
\mathcal M_A(Z;\cc)
\le\prod_{p\le\eta}
\left(1+O\bigl(\theta R_p p^{\theta/2}\bigr)\right).
\]
The logarithm of the product is
\[
\ll\sum_{p\le\eta}\log(\theta R_p)
+\frac{\theta}{2}\sum_{p\le\eta}\log p
\ll\eta+\theta\eta
\ll\frac{\log x}{\log\log\log x}.
\]
Thus, uniformly in $A$ and $\cc$,
\begin{equation}\label{eq:E2-bound}
\mathcal M_A(Z;\cc)\ll x^\varepsilon.
\end{equation}

Finally, Rankin's trick gives
\begin{equation}\label{eq:large-prime-tail-rankin}
|\mathcal T_A(Z;\cc)|
\le xZ^{-1/2}\prod_{p\le\eta}
\left(1+\sum_{r\ge2}
\frac{|h_A(p^r)|\nu_\cc(p^r)}{p^{r/2}}\right).
\end{equation}
By \eqref{eq:nu-smallp-bound}, each inner sum is $O(\theta p^{\theta/2-1})$; hence the product is $O(x^\varepsilon)$ by the same logarithmic estimate.
Since $Z\asymp x^{5d}$, we obtain
\begin{equation}\label{eq:large-prime-tail-bound}
|\mathcal T_A(Z;\cc)|\ll1.
\end{equation}
Substitution in \eqref{eq:trunc-main-err} proves the proposition.
\end{proof}

For the $j=0$ family, put
\begin{equation}\label{eq:kappa-p-def}
\kappa_p(\cc)=1+\sum_{r\ge2}h(p^r)\xi_\cc(p^r).
\end{equation}

The following convexity observation will be used for both families.

\begin{lemma}\label{lem:local-convexity}
Let $\cc\in\mathcal G_{\mathrm{arith}}(H)$ and $p\ge5$.
Suppose that $b(p^r)\in\{\pm1\}$ for $r\ge0$, with $b(1)=b(p)=1$, and put
\[
K_{p,b}(\cc)=\sum_{r\ge0}b(p^r)\mu_{p,r}(\cc).
\]
Then $|K_{p,b}(\cc)|\le1$ and
\begin{equation}\label{eq:local-convexity}
K_{p,b}(\cc)
=1+\sum_{r\ge2}
\bigl(b(p^r)-b(p^{r-1})\bigr)\xi_\cc(p^r).
\end{equation}
\end{lemma}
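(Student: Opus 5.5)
The argument has two parts: the bound follows from convexity, and the identity from Abel summation. By the remark before Proposition~\ref{prop:squarefull-polynomial-values}, for $\cc\in\mathcal G_{\mathrm{arith}}(H)$ the numbers $\mu_{p,r}(\cc)$ are nonnegative and satisfy $\sum_{r\ge0}\mu_{p,r}(\cc)=1$. They therefore form a probability distribution on $r\ge0$, namely the law of $v_p(g_\cc(n))$ for $n$ chosen $p$-adically at random. Since $|b(p^r)|=1$, the series defining $K_{p,b}(\cc)$ converges absolutely and is a convex combination of signs, so $|K_{p,b}(\cc)|\le1$.

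For \eqref{eq:local-convexity}, I would substitute $\mu_{p,r}=\xi_\cc(p^r)-\xi_\cc(p^{r+1})$ and sum by parts over truncations $0\le r\le R$. This gives
\[
\sum_{r=0}^{R}b(p^r)\mu_{p,r}(\cc)
=b(1)\xi_\cc(1)+\sum_{r=1}^{R}\bigl(b(p^r)-b(p^{r-1})\bigr)\xi_\cc(p^r)-b(p^R)\xi_\cc(p^{R+1}).
\]
Since $\xi_\cc(1)=\nu_\cc(1)=1$ and $b(1)=1$, the first term is $1$. The $r=1$ term vanishes because $b(p)=b(1)$.

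It remains to let $R\to\infty$, which is the only point needing care. I need $\xi_\cc(p^{R})\to0$. Because $\cont(g_\cc)\le\theta$ is finite, $v_p(\cont(g_\cc))<R$ for large $R$. Lemma~\ref{lem:nu-bounds}(2) then gives
\[
\xi_\cc(p^R)\le d\,p^{-R/d+v_p(\Delta(g_\cc))/d},
\]
which tends to $0$ since $\Delta(g_\cc)\ne0$. The same bound, together with $|b(p^r)-b(p^{r-1})|\le2$, shows that the series in \eqref{eq:local-convexity} converges absolutely. Passing to the limit therefore yields the identity.
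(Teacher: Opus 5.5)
Your proposal is correct and follows the paper's proof. The bound comes from the $\mu_{p,r}(\cc)$ being nonnegative weights summing to $1$, and the identity comes from summation by parts. Your explicit treatment of the boundary term $b(p^R)\xi_\cc(p^{R+1})\to0$ via Lemma~\ref{lem:nu-bounds}(2) fills in a step that the paper leaves implicit.
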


\begin{proof}
The bound follows because the $\mu_{p,r}(\cc)$ are nonnegative and sum to $1$.
Summation by parts gives \eqref{eq:local-convexity}.
\end{proof}

We now prove that at least one local factor in the $j=0$ family is strictly smaller than $1$ in absolute value.

\begin{lemma}\label{lem:kappa-strict}
For $\cc\in \mathcal G_0(H)$ there exists a prime $p\ge 5$ such that $|\kappa_p(\cc)|<1$.
\end{lemma}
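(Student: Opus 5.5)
Write $\kappa_p(\cc)$ in the convex form of Lemma~\ref{lem:local-convexity} with $b(p^r)=a(p^r)=\omega_p(p^r)$. This is legitimate because $h(p^r)=a(p^r)-a(p^{r-1})$ and $a(1)=a(p)=1$. Thus
\[
\kappa_p(\cc)=\sum_{r\ge0}a(p^r)\mu_{p,r}(\cc),
\]
a convex combination of the signs $a(p^r)\in\{\pm1\}$ with nonnegative weights $\mu_{p,r}(\cc)$ summing to $1$. Hence $|\kappa_p(\cc)|<1$ as soon as two indices $r,r'$ with $a(p^r)=1$, $a(p^{r'})=-1$ both carry positive weight. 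For $p\equiv2\pmod 3$ we have $\bigl(\frac{-3}{p}\bigr)=-1$, so $a(p^2)=-1$ while $a(1)=1$. It therefore suffices to find a prime $p\ge5$ with $p\equiv 2\pmod3$ such that $\mu_{p,0}(\cc)>0$ and $\mu_{p,2}(\cc)>0$.

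To produce such a prime I would invoke Chebotarev in the splitting field, as in Lemma~\ref{lem:chebotarev-projective}. For $\cc\in\mathcal G_0(H)$, $g_\cc$ is irreducible with $\Gal(L_\cc/\QQ)=S_d$ and $K=\QQ(\sqrt{-3})\not\subset L_\cc$. Since $K\not\subset L_\cc$ we have $K\cap L_\cc=\QQ$, so $\Gal(L_\cc K/\QQ)\cong S_d\times C_2$. Choose the element $(\sigma,\tau)$ with $\sigma$ a transposition (which fixes a root, since $d\ge2$; when $d=2$ take $\sigma=1$) and $\tau$ nontrivial. Chebotarev then gives infinitely many primes $p$, unramified in $L_\cc K$ and coprime to $6c_d\Delta(g_\cc)$, that are inert in $K$ (so $p\equiv2\pmod3$) and whose Frobenius fixes a root of $g_\cc$. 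Even simpler, the paper's Lemma~\ref{lem:chebotarev-projective} applies directly with $f$ the homogenization of $g_\cc$: then $K_f=\QQ(\alpha)\subset L_\cc$ does not contain $K$.

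For such a $p$ (with $p\nmid\Delta(g_\cc)$), $g_\cc$ has a simple root modulo $p$. By Lemma~\ref{lem:nu-bounds}(4) and Hensel, $\nu_\cc(p^r)=\nu_\cc(p)=:\nu\ge1$ for all $r\ge1$, so $\xi_\cc(p^r)=\nu p^{-r}$. Then $\mu_{p,0}=1-\nu/p$, which is positive once $p>d$ since $\nu\le d$, and $\mu_{p,2}=\nu p^{-2}(1-p^{-1})>0$. The explicit formula is easier still:
\[
\kappa_p(\cc)=1+\sum_{r\ge2}h(p^r)\nu p^{-r}=1-\frac{2\nu}{p^2}+\frac{2\nu}{p^3}-\cdots,
\]
using $h(p^2)=-2$, $h(p^3)=2$, and $a(p^r)$ periodic modulo $6$. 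So $\kappa_p(\cc)\in(-1,1)$ for $p$ large.

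The main point needing care is the Chebotarev step, namely guaranteeing an inert prime with a rational root. This is exactly where the hypothesis $\QQ(\sqrt{-3})\not\subset L_\cc$ is used, and it is already packaged in Lemma~\ref{lem:chebotarev-projective}. One also needs $p$ to avoid the finitely many primes dividing $6c_d\Delta(g_\cc)$ and to exceed $d$; infinitude handles this. Note that no uniformity in $H$ is required, since the lemma only asserts the existence of some prime.
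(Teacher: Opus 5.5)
Your argument is correct and is essentially the paper's proof: you use the convex form from Lemma~\ref{lem:local-convexity}, take an inert prime $p\equiv 2\pmod 3$ at which $g_\cc$ has a simple root (supplied by Lemma~\ref{lem:chebotarev-projective}), and use Hensel lifting to give positive weight to two indices with opposite signs. The only difference is cosmetic: you pair $r=0$ (sign $+1$, weight $1-\nu/p>0$ once $p>d$) with $r=2$, whereas the paper pairs $r=2$ with $r=3$, whose weights $\nu p^{-r}(1-p^{-1})$ are positive with no size condition on $p$.
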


\begin{proof}
Let $f_\cc$ be the primitive homogenization of $g_\cc$.
Its root field, in the notation of \eqref{eq:def-Kh}, is $K_{f_\cc}$, which embeds into the splitting field $L_\cc$.
Equation~\eqref{eq:def-Gfamilies} gives $\QQ(\sqrt{-3})\not\subset L_\cc$, and hence $\QQ(\sqrt{-3})\not\subset K_{f_\cc}$.
Lemma~\ref{lem:chebotarev-projective} supplies an inert prime $p\ge5$ for which $\{f_\cc=0\}$ has a simple projective point.
Choose $p$ away from the leading coefficient and discriminant of $g_\cc$.
The point is then affine, $p\equiv5\pmod6$, and Hensel lifting gives
\[
\nu_\cc(p^r)=\nu_\cc(p)\ge1\quad\text{for every }r\ge1.
\]

By \eqref{eq:h-prime-powers}, \eqref{eq:kappa-p-def}, and Lemma~\ref{lem:local-convexity},
\begin{equation}\label{eq:kappa-weighted}
\kappa_p(\cc)
=K_{p,a}(\cc)
=\sum_{r\ge 0}a(p^r)\mu_{p,r}(\cc).
\end{equation}
Since $p\equiv 5\pmod 6$, from the definition of $\omega_p$ and \eqref{eq:def-a} this gives
\[
a(p^2)=-1,\quad a(p^3)=1.
\]
Hensel lifting gives, for every $r\ge1$,
\[
\mu_{p,r}(\cc)
=\frac{\nu_\cc(p)}{p^r}\left(1-\frac1p\right)>0.
\]
Hence both signs $-1$ and $+1$ occur with positive weight in this weighted average \eqref{eq:kappa-weighted} for $\kappa_p(\cc)$, so $|\kappa_p(\cc)|<1$.
\end{proof}

\begin{proposition}\label{prop:local-product-tail}
Let $\cc\in\mathcal G_{\mathrm{arith}}(H)$.
For every prime $p\ge5$, choose signs $b(p^r)\in\{\pm1\}$ with $b(1)=b(p)=1$, and let $K_{p,b}(\cc)$ be as in Lemma~\ref{lem:local-convexity}.
Then $\prod_{p\ge5}K_{p,b}(\cc)$ converges and
\begin{equation}\label{eq:local-product-trunc}
\prod_{5\le p\le\eta}K_{p,b}(\cc)
=\prod_{p\ge5}K_{p,b}(\cc)+O(\theta^{-1}),
\end{equation}
uniformly in $\cc$ and in the choices of the signs $b(p^r)$.
\end{proposition}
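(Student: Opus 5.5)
Write $K_p=K_{p,b}(\cc)$ and $\delta_p=K_p-1$. By \eqref{eq:local-convexity}, $|\delta_p|\le 2\sum_{r\ge2}\xi_\cc(p^r)$. The plan is the same as in Proposition~\ref{prop:diff}: bound $\sum_{p>\eta}|\delta_p|$ by $O(\theta^{-1})$ uniformly in $\cc$ and in the signs. The standard product estimate then gives convergence, together with $\prod_{p>\eta}K_p=1+O(\theta^{-1})$. Since $|K_p|\le1$ for every $p$ by Lemma~\ref{lem:local-convexity}, the finite product $\prod_{5\le p\le\eta}K_p$ is bounded by $1$ in absolute value. Hence
\[
\Bigl|\prod_{p\ge5}K_p-\prod_{5\le p\le\eta}K_p\Bigr|
\le\Bigl|\prod_{p>\eta}K_p-1\Bigr|\ll\theta^{-1}.
\]
Convergence of the full product follows from $\sum_p|\delta_p|<\infty$, which the same bounds give since only finitely many primes divide $\cont(g_\cc)\Delta(g_\cc)$.

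To bound $\delta_p$ for $p>\eta$, first note that $\cont(g_\cc)\le\theta<\eta<p$, so $p\nmid\cont(g_\cc)$. I split according to whether $p$ divides $\Delta(g_\cc)$.

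If $p\nmid\Delta(g_\cc)$, Lemma~\ref{lem:nu-bounds}(4) gives $\nu_\cc(p^r)\le d$, so $\sum_{r\ge2}\xi_\cc(p^r)\ll d\,p^{-2}$. Summed over $p>\eta$ this is $O(\eta^{-1})$.

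If $p\mid\Delta(g_\cc)$, I need a bound of the form $\sum_{r\ge2}\xi_\cc(p^r)\ll p^{-1}$ uniformly. Lemma~\ref{lem:nu-bounds}(2) with $v_p(\cont)=0$ gives $\xi_\cc(p^r)\le d\,p^{-r/d+v_p(\Delta)/d}$. This is weak when $v_p(\Delta)$ is large, so instead I would use the monotonicity $\xi_\cc(p^{r})\le\xi_\cc(p^{r-1})$. This holds because each root modulo $p^r$ reduces to a root modulo $p^{r-1}$, so $\nu_\cc(p^r)\le p\,\nu_\cc(p^{r-1})$. A cleaner route to the needed bound is the nonnegativity of the $\mu_{p,r}$ together with $\sum_{r\ge1}r\mu_{p,r}=\sum_{r\ge1}\xi_\cc(p^r)$. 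Since $\xi_\cc(p)\le d/p$, I expect $\sum_{r\ge1}\xi_\cc(p^r)$ to be the expected valuation $\mathbb E\,v_p(g_\cc(n))$ for random $n\in\ZZ_p$. This equals $\sum_{\text{roots}}$ of root-local contributions, and is $\ll d/p+v_p(\Delta)/p$ roughly. This is the main obstacle. The fallback is Lemma~\ref{lem:nu-bounds}(3): it gives $\xi_\cc(p^r)\le(2p^{v_p(\Delta)/2}+d)p^{-r}$. For $r>v_p(\Delta)$, the sum over such $r$ contributes $\ll p^{-v_p(\Delta)/2-1}\ll p^{-1}$. For $2\le r\le v_p(\Delta)$, I would use $\xi_\cc(p^r)\le\xi_\cc(p)\le d/p$, which gives at most $d\,v_p(\Delta)/p$. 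So $|\delta_p|\ll(1+v_p(\Delta(g_\cc)))/p$.

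It then remains to bound $\sum_{p>\eta,\ p\mid\Delta}(1+v_p(\Delta))/p$. The term $\sum_{p\mid\Delta,\,p>\eta}1/p$ is handled exactly as in Proposition~\ref{prop:diff}: split at $\log x$, use Lemma~\ref{lem:mert} below $\log x$, and use $|\Delta(g_\cc)|\ll H^{2d-2}=x^{O(1)}$ to bound the number of larger prime divisors. This gives $O(\theta^{-1})$. For the weighted term I use $\sum_{p>\eta}v_p(\Delta)\log p\le\log|\Delta|\ll\log x$, and $1/p\le 1/\eta$ with $\log p\ge\log\eta$. This gives $\sum_{p>\eta}v_p(\Delta)/p\ll\log x/(\eta\log\eta)\ll1/\log\eta$. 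Unfortunately this is only $\asymp1/\log\log x$, which is still $\ll\theta^{-1}$ because $\theta\le\log\log x$. So the total is $O(\theta^{-1})$, uniformly in $\cc$ and the signs, which completes the plan.
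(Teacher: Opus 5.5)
\textbf{There is a genuine gap in your last step.} Your reduction to bounding $\sum_{p>\eta}|\delta_p|$, your treatment of the primes $p\nmid\Delta(g_\cc)$, and your final comparison of the truncated and full products are all fine. The failure is in the weighted sum. You correctly obtain $\sum_{p>\eta}v_p(\Delta)/p\ll\log x/(\eta\log\eta)$. But since $\eta=\log x/\log\log x$, we have $\eta\log\eta\sim\log x$, so the right-hand side is $\asymp\log\log x/\log\eta\asymp1$, not $\ll1/\log\eta$.

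This cannot be repaired by summing more carefully, because $\mathcal G_{\mathrm{arith}}(H)$ places no restriction on $v_p(\Delta(g_\cc))$ for $p>\eta$. For example, take $d=2$, a prime $p\in(\eta,2\eta]$, $k$ maximal with $p^k\le H-2$, and $g_\cc(t)=(t+1)(t+1+p^k)$. Then $\cc\in\mathcal G_{\mathrm{arith}}(H)$ and $\Delta(g_\cc)=p^{2k}$, with $v_p(\Delta)/p\gg1$. So your per-prime bound $|\delta_p|\ll(1+v_p(\Delta))/p$ is already of order $1$ at this single prime, and uniformity in $\cc$ is lost. The loss comes from bounding every $\xi_\cc(p^r)$ with $2\le r\le v_p(\Delta)$ by $\xi_\cc(p)\le d/p$. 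In this example, in fact, $\sum_{r\ge2}\xi_\cc(p^r)\ll1/p$.

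\textbf{The missing idea} is to use the $\mu$-weighted form of $K_{p,b}(\cc)$ in Lemma~\ref{lem:local-convexity} rather than the $\xi$-difference form. Since $b(1)=b(p)=1$ and the $\mu_{p,r}(\cc)$ are nonnegative with total mass $1$,
\[
K_{p,b}(\cc)-1=\sum_{r\ge2}\bigl(b(p^r)-1\bigr)\mu_{p,r}(\cc),
\qquad
|K_{p,b}(\cc)-1|\le2\sum_{r\ge2}\mu_{p,r}(\cc)=2\xi_\cc(p^2),
\]
by telescoping. For $p\mid\Delta$ and $p>\eta$, the bound $\nu_\cc(p^2)\le p\,\nu_\cc(p)\le dp$ then gives $|\delta_p|\le2d/p$, with no dependence on $v_p(\Delta)$. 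Only the unweighted sum $\sum_{p>\eta,\,p\mid\Delta}1/p$ remains, and you already handle it by the Mertens split at $\log x$. This bound also gives $|\delta_p|\le1/2$ once $\eta>4d$, which makes the logarithmic product estimate immediate; this is the paper's argument.

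Your heuristic $\sum_{r\ge1}\xi_\cc(p^r)=\int_{\ZZ_p}v_p(g_\cc(t))\,dt\le d/(p-1)$ for $p\nmid\cont(g_\cc)$ would also remove the dependence on $v_p(\Delta)$ if proved. However, it needs a root-by-root analysis over $\overline{\QQ}_p$, whereas the telescoping bound is one line.
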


\begin{proof}
Fix $\cc$ and write $\Delta=\Delta(g_\cc)$.
For $p>\eta$, Lemma~\ref{lem:local-convexity} gives
\[
K_{p,b}(\cc)-1
=\sum_{r\ge 2}\bigl(b(p^r)-1\bigr)
\mu_{p,r}(\cc).
\]
Thus, since $|b(p^r)-1|\le2$,
\begin{equation}\label{eq:kappa-delta-bound}
|K_{p,b}(\cc)-1|
\le 2\sum_{r\ge 2}\mu_{p,r}(\cc)
=2\xi_\cc(p^2)
=2\frac{\nu_\cc(p^2)}{p^2}.
\end{equation}

We now estimate the tail product.
For sufficiently large $H$ we have $\eta>\theta\ge\cont(g_\cc)$.
Thus $p\nmid\cont(g_\cc)$ for every $p>\eta$, and consequently $\nu_\cc(p)\le d$.
If moreover $p\nmid \Delta$, then $g_\cc$ has only simple roots modulo $p$, so Hensel lifting gives $\nu_\cc(p^2)=\nu_\cc(p)\le d$, and thus $\nu_\cc(p^2)/p^2\ll p^{-2}$.
If $p\mid \Delta$, reduction modulo $p$ gives $\nu_\cc(p^2)\le p\nu_\cc(p)\le dp$, and hence $\nu_\cc(p^2)/p^2\ll p^{-1}$.
Therefore, from \eqref{eq:kappa-delta-bound},
\begin{equation}\label{eq:sum-kappa-delta}
\sum_{p>\eta}\bigl|K_{p,b}(\cc)-1\bigr|
\ll \sum_{\substack{p>\eta\\ p\mid \Delta}}\frac1p \;+\; \sum_{p>\eta}\frac1{p^2}.
\end{equation}
The second sum is $\ll \eta^{-1}\ll \theta^{-1}$.
For the first sum, split at $\log x$.
By Lemma~\ref{lem:mert}, we have $\sum_{\eta<p\le \log x}p^{-1}=O(\theta^{-1})$, and hence the same bound for any subcollection of primes in this range.
For primes $p>\log x$ dividing $\Delta$, write $p_1,\dots,p_R$ for the distinct such primes.
Since $\Delta(g_\cc)$ is a nonzero polynomial in the coefficients of total degree $2d-2$ and $|\cc|\le H$, we have $|\Delta|\ll H^{2d-2}$, so
\[
(\log x)^R \le p_1\cdots p_R \le |\Delta| \ll H^{2d-2}=x^{O(1)}.
\]
Thus $R\ll \log x/\log\log x$, and therefore
\[
\sum_{\substack{p>\log x\\ p\mid \Delta}}\frac1p \le \frac{R}{\log x}\ll \frac1{\log\log x}\ll \theta^{-1}.
\]
Substituting into \eqref{eq:sum-kappa-delta} gives
\begin{equation}\label{eq:tail-small}
\sum_{p>\eta}\bigl|K_{p,b}(\cc)-1\bigr|\ll \theta^{-1}.
\end{equation}

For $H$ sufficiently large we have $\eta>4d$, so \eqref{eq:kappa-delta-bound} implies $|K_{p,b}(\cc)-1|\le 1/2$ for all $p>\eta$.
In particular these real factors are positive, and $|\log K_{p,b}(\cc)|\ll |K_{p,b}(\cc)-1|$.
Using \eqref{eq:tail-small} we obtain
\[
\log\Bigl(\prod_{p>\eta}K_{p,b}(\cc)\Bigr)
=\sum_{p>\eta}\log K_{p,b}(\cc)=O(\theta^{-1}),
\]
and therefore
\[
\prod_{p>\eta}K_{p,b}(\cc)=1+O(\theta^{-1}).
\]
In particular, the full Euler product converges, and
\[
\prod_{p\ge 5}K_{p,b}(\cc)
=\Bigl(\prod_{5\le p\le \eta}K_{p,b}(\cc)\Bigr)
\Bigl(1+O(\theta^{-1})\Bigr).
\]
Finally, Lemma~\ref{lem:local-convexity} gives $\bigl|\prod_{5\le p\le \eta}K_{p,b}(\cc)\bigr|\le1$, so expanding the previous display yields \eqref{eq:local-product-trunc}.
\end{proof}

\begin{corollary}\label{cor:kappa-tail}
For $\cc\in\mathcal G_{\mathrm{arith}}(H)$, the product $\prod_{p\ge5}\kappa_p(\cc)$ converges and
\begin{equation}\label{eq:kappa-trunc-full}
\prod_{5\le p\le\eta}\kappa_p(\cc)
=\prod_{p\ge5}\kappa_p(\cc)+O(\theta^{-1}).
\end{equation}
\end{corollary}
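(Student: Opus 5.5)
The plan is to deduce this from Proposition~\ref{prop:local-product-tail} by specialising the signs. For each prime $p\ge5$ we take $b(p^r)=a(p^r)$, where $a$ is the multiplicative function of \eqref{eq:def-a}. We first check the normalisation required there. By definition $a(1)=1$, and $a(p)=1$ because $\omega_p(n)=1$ whenever $v_p(n)=1$ (equivalently, the product in \eqref{eq:def-a} only runs over $p$ with $p^2\mid n$). Moreover $a(p^r)\in\{\pm1\}$, since $\omega_p$ takes values $1$ or $\bigl(\frac{-3}{p}\bigr)=\pm1$. So these signs are admissible.

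Next we identify $\kappa_p(\cc)$ with $K_{p,a}(\cc)$. By \eqref{eq:h-prime-powers} we have $h(p^r)=a(p^r)-a(p^{r-1})$, so \eqref{eq:kappa-p-def} reads
\[
\kappa_p(\cc)=1+\sum_{r\ge2}\bigl(a(p^r)-a(p^{r-1})\bigr)\xi_\cc(p^r).
\]
By \eqref{eq:local-convexity} this is exactly $K_{p,a}(\cc)$ (this identification was already recorded in \eqref{eq:kappa-weighted}). Proposition~\ref{prop:local-product-tail} then gives convergence of $\prod_{p\ge5}\kappa_p(\cc)$ together with \eqref{eq:kappa-trunc-full}, uniformly in $\cc\in\mathcal G_{\mathrm{arith}}(H)$.

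There is no real obstacle here. The only point needing care is confirming $b(p)=1$: the tail bound \eqref{eq:kappa-delta-bound} depends on the sum starting at $r=2$, and that is guaranteed by $h(p)=0$.
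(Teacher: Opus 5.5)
Your proposal is correct and is essentially the paper's proof: apply Proposition~\ref{prop:local-product-tail} with $b=a$ (admissible since $a(1)=a(p)=1$ and $a(p^r)\in\{\pm1\}$), and identify $\kappa_p(\cc)=K_{p,a}(\cc)$ via \eqref{eq:h-prime-powers}, \eqref{eq:kappa-p-def} and \eqref{eq:local-convexity}. Your normalisation checks are exactly what that application needs.
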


\begin{proof}
Apply Proposition~\ref{prop:local-product-tail} with $b=a$, using \eqref{eq:h-prime-powers} and \eqref{eq:kappa-p-def}.
\end{proof}

\subsection{Densities at small primes and the truncated average}

Recall the truncated root number $\rw(\cdot;\eta)$ from \eqref{eq:rw-eta-def}.
We also recall the starred-sum convention \eqref{eq:starred-sum-convention}.

For $\cc\in\mathcal G_{\mathrm{arith}}(H)$, put
\begin{equation}\label{eq:beta-c-def}
\beta_\cc(x)=\frac1x\int_1^x\sign(g_\cc(t))\,dt.
\end{equation}

\begin{proposition}\label{prop:cell-average}
Fix positive integers $b_2,b_3$, put
\[
T=2^{b_2}3^{b_3},\quad M_{r,s}=2^{r+b_2}3^{s+b_3},
\]
and let $A:\NN\to\{\pm1\}$ be multiplicative, with $A(p)=1$ for every prime $p\ge5$ and $A(p^r)=1$ for $p\in\{2,3\}$ and $r\ge1$.
Let $K_{A,p}(\cc)$ be as in \eqref{eq:def-KAp}.
Choose signs $\sigma_{r,s,t}\in\{\pm1\}$ for $r,s\ge0$ and $t\in(\ZZ/T\ZZ)^\times$.
Suppose that, for $m\ne0$,
\begin{equation}\label{eq:cell-law}
\begin{aligned}
w(m;\eta)&=\sign(m)\sigma_{r,s,t}A(|m|;\eta),\\
r&=v_2(m),\quad s=v_3(m),\quad
t\equiv\frac{m}{2^r3^s}\pmod T,
\end{aligned}
\end{equation}
where $t$ is evaluated at its least positive representative, and put $w(0;\eta)=0$.
Define
\[
\lambda_{r,s,t}(\cc)
=\frac1{M_{r,s}}\#\{u\pmod{M_{r,s}}:
g_\cc(u)\equiv2^r3^s t\pmod{M_{r,s}}\}
\]
and
\[
\kappa_{\mathrm{sm}}(\cc)
=\sum_{r,s\ge0}\ \sumstar_{t\bmod T}
\sigma_{r,s,t}\lambda_{r,s,t}(\cc).
\]
Then $|\kappa_{\mathrm{sm}}(\cc)|\le1$ and, for every $\varepsilon>0$,
\[
\sum_{1\le n\le x}w(g_\cc(n);\eta)
=\beta_\cc(x)\kappa_{\mathrm{sm}}(\cc)
\Bigl(\prod_{5\le p\le\eta}K_{A,p}(\cc)\Bigr)x
+O_{b_2,b_3}(x^\varepsilon),
\]
uniformly in $A$ and $\cc\in\mathcal G_{\mathrm{arith}}(H)$.
\end{proposition}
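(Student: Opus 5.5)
The plan is to split the sum over $n$ according to the cell of $g_\cc(n)$: its $2$- and $3$-adic valuations $(r,s)$, the unit class $t\bmod T$, and the sign of $g_\cc(n)$. On each cell the weight $w(g_\cc(n);\eta)$ equals the constant $\sigma_{r,s,t}\sign(g_\cc(n))$ times $A(|g_\cc(n)|;\eta)$. The first step is to localise the sign. Since $g_\cc$ has degree $d$ and no real-root multiplicity issues matter, $[1,x]$ splits into at most $d+1$ subintervals on which $\sign(g_\cc)$ is constant. Replacing the sum over $n$ by sums over these subintervals costs $O(d)$ per cell boundary, which is absorbed because every counting error below is already $O(x^\varepsilon)$. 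It therefore suffices to evaluate
\[
\sum_{\substack{n\in J\\ g_\cc(n)\equiv 2^r3^st\ (M_{r,s})}}A(|g_\cc(n)|;\eta)
\]
for an interval $J$, where the congruence condition modulo $M_{r,s}$ encodes membership in the cell $(r,s,t)$ for $n$ with $g_\cc(n)\ne0$.

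Next, I expand $A(\cdot;\eta)=\sum_{q\mid\cdot,\ P^+(q)\le\eta}h_A(q)$ as in Lemma~\ref{lem:squarefull-kernel}, and argue exactly as in Proposition~\ref{prop:squarefull-polynomial-values}. Since $h_A$ is supported on integers coprime to $6$, each $q$ is coprime to $M_{r,s}$. The joint conditions $q\mid g_\cc(n)$ and $g_\cc(n)\equiv2^r3^st\pmod{M_{r,s}}$ therefore factor by the Chinese remainder theorem into $\nu_\cc(q)\cdot M_{r,s}\lambda_{r,s,t}(\cc)$ classes modulo $qM_{r,s}$. Each cell then contributes $|J|\lambda_{r,s,t}(\cc)\sum_q h_A(q)\nu_\cc(q)/q$ plus an error $O(\lambda_{r,s,t}M_{r,s}\sum_{q\le Z}|h_A(q)|\nu_\cc(q))$ and the Rankin tail. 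The inner sum is $\prod_{5\le p\le\eta}K_{A,p}(\cc)$ by \eqref{eq:trunc-euler-prod}. The errors $\mathcal M_A$ and $\mathcal T_A$ are $O(x^\varepsilon)$ by \eqref{eq:E2-bound} and \eqref{eq:large-prime-tail-bound}. Summing $|J|$ with the signs $\sign(g_\cc)$ over the subintervals gives $\int_1^x\sign(g_\cc)=\beta_\cc(x)x$ up to $O(1)$. Summing over $(r,s,t)$ with weights $\sigma_{r,s,t}$ then produces $\kappa_{\mathrm{sm}}(\cc)$.

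The main obstacle is controlling the infinitely many cells uniformly. The count $M_{r,s}\lambda_{r,s,t}(\cc)$ can be as large as $M_{r,s}$ when $2^r3^s$ divides the content, so the per-cell error is only $O(\nu_\cc(M_{r,s})x^{\varepsilon})$. I would truncate at $2^r,3^s\le Z$, since larger valuations cannot occur for nonzero $|g_\cc(n)|\le Z$. For $r,s\le\log Z$, the bound \eqref{eq:nu-smallp-bound} gives $\nu_\cc(2^{r+b_2}3^{s+b_3})\ll\theta^2 6^{\theta/2}$, which is $x^{o(1)}$. With $O((\log x)^2)$ cells the total error is $O_{b_2,b_3}(x^{\varepsilon})$. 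The discarded tail of the main-term series $\sum_{r>\log_2Z}\lambda_{r,s,t}$ is bounded by $\xi_\cc(2^{r})\ll x^{o(1)}/Z$, hence is negligible after multiplication by $x$.

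Finally, $|\kappa_{\mathrm{sm}}(\cc)|\le1$ holds because, for fixed $r,s$, the classes counted by $\lambda_{r,s,t}$ as $t$ ranges are exactly the $u$ with $v_2(g_\cc(u))=r$ and $v_3(g_\cc(u))=s$ detected modulo $M_{r,s}$. These sets are disjoint in $\ZZ_2\times\ZZ_3$-measure and have total measure at most $1$ (exactly $1$, since $g_\cc$ is nonzero and its zero set has measure zero, as in Lemma~\ref{lem:asy-qadic}). Thus the $\lambda_{r,s,t}(\cc)$ are nonnegative densities summing to $1$, and $\kappa_{\mathrm{sm}}(\cc)$ is a convex combination of signs.
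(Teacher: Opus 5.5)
Your proposal is correct and is essentially the paper's proof with the order of summation swapped. The paper expands $A(\cdot;\eta)$ first and then evaluates each $S_A(q)$ cell by cell, but it uses the same ingredients: the same CRT count of $\nu_\cc(q)M_{r,s}\lambda_{r,s,t}(\cc)$ classes modulo $qM_{r,s}$, the same splitting of $[1,x]$ at the real roots, the same truncation of the cells at $\log Z$, and the same bounds \eqref{eq:E2-bound} and \eqref{eq:large-prime-tail-bound}.

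One slip needs fixing. The class count $M_{r,s}\lambda_{r,s,t}(\cc)$ is not bounded by $\nu_\cc(M_{r,s})$, since these residues satisfy $g_\cc(u)\equiv2^r3^st\not\equiv0\pmod{M_{r,s}}$. Reducing modulo $2^r3^s$ instead gives $M_{r,s}\lambda_{r,s,t}(\cc)\le T\nu_\cc(2^r)\nu_\cc(3^s)$, which is \eqref{eq:cell-lift-bound}. This bound is again $x^{o(1)}$ by \eqref{eq:nu-smallp-bound}, so the rest of your argument goes through unchanged.
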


\begin{proof}
Let $h_A=A*\mu$ and let $Z$ be as in \eqref{eq:height-cutoff}.
The values $n$ with $g_\cc(n)=0$ contribute nothing.
On the remaining values, expand $A(|g_\cc(n)|;\eta)$ by Lemma~\ref{lem:squarefull-kernel} and interchange the sums:
\[
\sum_{n\le x}w(g_\cc(n);\eta)
=\sum_{\substack{q\le Z\\P^+(q)\le\eta}}h_A(q)S_A(q),
\]
where, for $g_\cc(n)\ne0$, we write
\[
r(n)=v_2(g_\cc(n)),\quad s(n)=v_3(g_\cc(n)),\quad
t(n)\equiv\frac{g_\cc(n)}{2^{r(n)}3^{s(n)}}\pmod T,
\]
and
\[
S_A(q)=\sum_{\substack{n\le x\\q\mid g_\cc(n)\\g_\cc(n)\ne0}}
\sign(g_\cc(n))\sigma_{r(n),s(n),t(n)}.
\]
Only $q$ with $h_A(q)\ne0$ matter, and these are coprime to $6$.

Write
\[
N_{r,s,t}(\cc)
=\#\{u\pmod{M_{r,s}}:
g_\cc(u)\equiv2^r3^s t\pmod{M_{r,s}}\}.
\]
For fixed $(r,s,t)$, the conditions defining $S_A(q)$ occupy $\nu_\cc(q)N_{r,s,t}(\cc)$ classes modulo $qM_{r,s}$.
Partitioning $[1,x]$ at the real roots of $g_\cc$ into intervals of constant sign gives, on each such class,
\[
\sum_{\substack{n\le x\\n\equiv u\pmod{qM_{r,s}}}}
\sign(g_\cc(n))
=\frac{\beta_\cc(x)x}{qM_{r,s}}+O(1).
\]
Moreover, reduction modulo $2^r$ and $3^s$ gives the uniform lift bound
\begin{equation}\label{eq:cell-lift-bound}
N_{r,s,t}(\cc)
\le T\nu_\cc(2^r)\nu_\cc(3^s).
\end{equation}

Only $r\le R_2=\lfloor\log Z/\log2\rfloor$ and $s\le R_3=\lfloor\log Z/\log3\rfloor$ occur.
By \eqref{eq:nu-smallp-bound} and \eqref{eq:cell-lift-bound},
\[
\sum_{0\le r\le R_2}\sum_{0\le s\le R_3}
\sumstar_{t\bmod T}N_{r,s,t}(\cc)
\ll x^\varepsilon.
\]
Completing the main term to all valuation cells costs at most
\[
\frac{\nu_\cc(2^{R_2+1})}{2^{R_2+1}}
+\frac{\nu_\cc(3^{R_3+1})}{3^{R_3+1}}
\ll Z^{-1}x^\varepsilon.
\]
This is the tail of the cell-density coefficient.
Since $x/(qZ)\le1$, its contribution to the main term is $O(\nu_\cc(q)x^\varepsilon)$ for every $\varepsilon>0$.
Consequently,
\begin{equation}\label{eq:generic-cell-sum}
S_A(q)
=\beta_\cc(x)\kappa_{\mathrm{sm}}(\cc)
\frac{x\nu_\cc(q)}q
+O_{b_2,b_3}\bigl(\nu_\cc(q)x^\varepsilon\bigr).
\end{equation}

The $\lambda_{r,s,t}(\cc)$ are the nonnegative densities of disjoint valuation cells.
The cells with $r<R$ and $s<S$ omit only values for which $2^R\mid g_\cc(n)$ or $3^S\mid g_\cc(n)$, a set of upper density at most
\[
\frac{\nu_\cc(2^R)}{2^R}+\frac{\nu_\cc(3^S)}{3^S}.
\]
This tends to $0$ by Lemma~\ref{lem:nu-bounds}(2), so the cell densities sum to $1$.
It follows that $|\kappa_{\mathrm{sm}}(\cc)|\le1$.

Apply \eqref{eq:generic-cell-sum} with $\varepsilon/2$ in place of $\varepsilon$ and insert it in the divisor expansion.
With $\mathcal T_A$ and $\mathcal M_A$ as defined in \eqref{eq:def-trunc-errors}, the tail of the main $q$-sum is $\mathcal T_A(Z;\cc)$, while the accumulated error is bounded by $x^{\varepsilon/2}\mathcal M_A(Z;\cc)$.
Equations \eqref{eq:large-prime-tail-bound} and \eqref{eq:E2-bound}, the latter also with $\varepsilon/2$ in place of $\varepsilon$, therefore give an error $O(x^\varepsilon)$.
Finally, \eqref{eq:trunc-euler-prod} supplies the stated large prime product.
\end{proof}

For the $j=0$ family, define
\begin{equation}\label{eq:kappa23-def}
\kappa_{2,3}(\cc)
=\sum_{r,s\ge0}\ \sumstar_{t\bmod36}
\bigl(-\omega_2(2^r3^s t)\omega_3(2^r3^s t)\bigr)
\lambda_{r,s,t}(\cc),
\end{equation}
where
\[
\lambda_{r,s,t}(\cc)
=\frac1{2^{r+2}3^{s+2}}
\#\{u\pmod{2^{r+2}3^{s+2}}:
g_\cc(u)\equiv2^r3^s t\pmod{2^{r+2}3^{s+2}}\}.
\]

\begin{proposition}\label{prop:asymp}
For every $\varepsilon>0$ and every $\cc\in\mathcal G_{\mathrm{arith}}(H)$,
\begin{equation}\label{eq:rw-trunc-poly}
\sum_{1\le n\le x}\rw(g_\cc(n);\eta)
=\beta_\cc(x)\kappa_{2,3}(\cc)
\Bigl(\prod_{5\le p\le\eta}\kappa_p(\cc)\Bigr)x
+O(x^\varepsilon).
\end{equation}
Moreover $|\kappa_{2,3}(\cc)|\le1$.
\end{proposition}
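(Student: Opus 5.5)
The plan is to deduce Proposition~\ref{prop:asymp} directly from Proposition~\ref{prop:cell-average}, in the same way that Proposition~\ref{prop:rw-interval} was deduced from Proposition~\ref{prop:cell-progressions}. We take $A=a$ from \eqref{eq:def-a}, which satisfies the standing hypotheses ($A(p)=1$ for $p\ge5$, $A(p^r)=1$ for $p\in\{2,3\}$). We choose $(b_2,b_3)=(2,2)$, so that $T=36$ and $M_{r,s}=2^{r+2}3^{s+2}$, and we set
\[
\sigma_{r,s,t}=-\omega_2(2^r3^st)\,\omega_3(2^r3^st),
\]
evaluated at the least positive representative $t$. With these choices the coefficient $\kappa_{\mathrm{sm}}(\cc)$ of Proposition~\ref{prop:cell-average} is exactly $\kappa_{2,3}(\cc)$ in \eqref{eq:kappa23-def}, because the $\lambda_{r,s,t}(\cc)$ coincide. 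Also $K_{A,p}(\cc)=\kappa_p(\cc)$, since $h_A=a*\mu=h$, comparing \eqref{eq:def-KAp} with \eqref{eq:kappa-p-def}.

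The main step is to check the cell law \eqref{eq:cell-law} for $w(m;\eta)=\rw(m;\eta)$ from \eqref{eq:rw-eta-def}. Since $a$ is even and multiplicative on $\NN$, we have $a(m;\eta)=a(|m|;\eta)$. So it suffices to show that
\[
-\omega_2(m)\omega_3(m)=\sign(m)\,\sigma_{v_2(m),v_3(m),t}.
\]
This is the computation already carried out in the proof of Proposition~\ref{prop:rw-interval}.

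\emph{Local root numbers.} The local root numbers $\rw_2(m)$ and $\rw_3(m)$ depend only on $v_2(m)$, $v_3(m)$, the odd part modulo $4$, and the $3$-adic unit part modulo $9$. All of these are determined by $(r,s,t\bmod 36)$, including for negative $m$, since $t$ is the class of the signed unit part. The factor $(-1)^{v_3(m)}$ is fixed by $s$.

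\emph{Jacobi symbol.} The one delicate factor is $\bigl(\frac{-1}{|m_2|}\bigr)$. Write $m_2=3^s\cdot(m/2^r3^s)$. If $m>0$, then $m_2\equiv 3^st\pmod 4$, which matches the value used in $\sigma$. If $m<0$, then $|m_2|\equiv -3^st\pmod 4$, and so
\[
\Bigl(\frac{-1}{|m_2|}\Bigr)=-\Bigl(\frac{-1}{3^st}\Bigr),
\]
where $t$ is the least positive representative of the signed unit part modulo $36$ (positive representatives are congruent modulo $4$). This produces exactly the factor $\sign(m)$.

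The main obstacle is this sign bookkeeping: one must make sure that the local signs of a negative integer are correctly read from the positive representative $t$. Once it is verified, everything else is a direct invocation.

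Proposition~\ref{prop:cell-average} then gives \eqref{eq:rw-trunc-poly} with error $O(x^\varepsilon)$, uniformly for $\cc\in\mathcal G_{\mathrm{arith}}(H)$. The same proposition gives $|\kappa_{\mathrm{sm}}(\cc)|\le1$, which is the bound $|\kappa_{2,3}(\cc)|\le1$.
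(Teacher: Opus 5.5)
Your proposal is correct and is exactly the paper's proof: apply Proposition~\ref{prop:cell-average} with $A=a$, $(b_2,b_3,T)=(2,2,36)$ and $\sigma_{r,s,t}=-\omega_2(2^r3^st)\,\omega_3(2^r3^st)$, then identify $K_{A,p}(\cc)=\kappa_p(\cc)$ and $\kappa_{\mathrm{sm}}(\cc)=\kappa_{2,3}(\cc)$. The cell law is verified by the sign computation from Proposition~\ref{prop:rw-interval}; your explicit check that $\bigl(\frac{-1}{|m_2|}\bigr)=-\bigl(\frac{-1}{3^st}\bigr)$ for $m<0$ is the bookkeeping the paper leaves implicit there.
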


\begin{proof}
Apply Proposition~\ref{prop:cell-average} with $A=a$, $(b_2,b_3,T)=(2,2,36)$, and
\[
\sigma_{r,s,t}
=-\omega_2(2^r3^s t)\omega_3(2^r3^s t).
\]
The cell law \eqref{eq:cell-law} is the factorisation of $\rw(\cdot;\eta)$ verified in the proof of Proposition~\ref{prop:rw-interval}: here $a(|m|;\eta)=\prod_{5\le p\le\eta}\omega_p(m)$, and the negative cells supply the factor $\sign(m)$.
Moreover, $K_{A,p}(\cc)=\kappa_p(\cc)$ and $\kappa_{\mathrm{sm}}(\cc)=\kappa_{2,3}(\cc)$.
\end{proof}

\begin{lemma}\label{lem:final-assembly}
Let $\mathcal G\subset\mathcal G_{\mathrm{arith}}(H)$ satisfy $\#(S(H)\setminus\mathcal G)\ll\theta^{-d}H^{d+1}$, and let $\mathcal E\subset S(H)$ have size $O(\theta^{-1/3}H^{d+1})$.
Suppose that uniformly bounded functions $w,w_\eta:\ZZ\to\CC$ and real numbers $K_{2,3}(\cc),K_p(\cc)$ satisfy, uniformly for $\cc\in\mathcal G$,
\begin{align*}
\sum_{n\le x}w_\eta(g_\cc(n))
&=\beta_\cc(x)K_{2,3}(\cc)
  \Bigl(\prod_{5\le p\le\eta}K_p(\cc)\Bigr)x+O(x^{1/2}),\\
\prod_{5\le p\le\eta}K_p(\cc)
&=\prod_{p\ge5}K_p(\cc)+O(\theta^{-1}),
\end{align*}
where the full Euler product converges for every $\cc\in\mathcal G$.
Suppose also that, for $\cc\in S(H)\setminus\mathcal E$,
\[
\left|\sum_{n\le x}
\bigl(w(g_\cc(n))-w_\eta(g_\cc(n))\bigr)\right|
\ll\theta^{-1/3}x.
\]
Assume, for every $\cc\in\mathcal G$, that
\[
|K_{2,3}(\cc)|\le1,\quad |K_p(\cc)|\le1\quad\text{for every }p\ge5,
\]
and that for each such $\cc$ there is a prime $p\ge5$ for which $|K_p(\cc)|<1$.
Then, outside a subset of $S(H)$ of size $O(\theta^{-1/3}H^{d+1})$,
\[
\sum_{n\le x}w(g_\cc(n))
=K_\cc x+O(\theta^{-1/3}x),
\quad
K_\cc=\sign(c_d)K_{2,3}(\cc)\prod_{p\ge5}K_p(\cc),
\]
and $|K_\cc|<1$.
\end{lemma}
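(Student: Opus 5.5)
The exceptional set will be $(S(H)\setminus\mathcal G)\cup\mathcal E$; by hypothesis its size is $O(\theta^{-d}H^{d+1}+\theta^{-1/3}H^{d+1})=O(\theta^{-1/3}H^{d+1})$, since $d\ge2$ and $\theta>1$. Fix $\cc$ outside this set, so that $\cc\in\mathcal G\setminus\mathcal E$. The argument then has three steps: compare $w$ with $w_\eta$, replace the truncated Euler product by the full one, and replace $\beta_\cc(x)$ by $\sign(c_d)$.

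First, the hypothesis on $\mathcal E$ gives
\[
\sum_{n\le x}w(g_\cc(n))=\sum_{n\le x}w_\eta(g_\cc(n))+O(\theta^{-1/3}x).
\]
Inserting the assumed asymptotic for $w_\eta$ and the Euler product truncation estimate gives
\[
\beta_\cc(x)K_{2,3}(\cc)\Bigl(\prod_{p\ge5}K_p(\cc)+O(\theta^{-1})\Bigr)x+O(x^{1/2}).
\]
Because $|\beta_\cc(x)|\le1$ and $|K_{2,3}(\cc)|\le1$, the $O(\theta^{-1})$ contributes $O(\theta^{-1}x)$. The $O(x^{1/2})$ term is also $O(\theta^{-1/3}x)$, since $\theta\ll\log\log x$.

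The main step, and the one I expect to need the most care, is to show $\beta_\cc(x)=\sign(c_d)+O(\theta^{-1/3})$ uniformly over $\cc\in\mathcal G$. Since $c_d\ne0$, the function $\sign(g_\cc(t))$ equals $\sign(c_d)$ for $t$ beyond the largest real root of $g_\cc$. By Cauchy's root bound, every root satisfies $|t|\le 1+\max_i|c_i|/|c_d|\le 1+H/|c_d|$. Hence
\[
\beta_\cc(x)=\sign(c_d)+O\bigl(\min(1,H/(|c_d|x))\bigr).
\]
This is not uniform when $|c_d|$ is small, so those vectors must go into the exceptional set. The vectors with $|c_d|\le \theta^{1/3}H/x$ number $O(\theta^{1/3}H^{d+1}/x)$, which is far smaller than $\theta^{-1/3}H^{d+1}$ because $x=H^{1/(4d)}$. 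For every other $\cc$ the error is $O(\theta^{-1/3})$. Multiplying this error by the bounded product $K_{2,3}(\cc)\prod_{p\ge5}K_p(\cc)$ costs only $O(\theta^{-1/3}x)$. This gives the asymptotic with the stated $K_\cc$.

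Finally, for the strict bound: each $|K_p(\cc)|\le1$, so every partial product has modulus at most $1$, and the convergent full product has $|\prod_{p\ge5}K_p(\cc)|\le|K_{p_0}(\cc)|\cdot1$, where $p_0\ge5$ is the prime supplied by hypothesis. Together with $|K_{2,3}(\cc)|\le1$ this gives $|K_\cc|\le|K_{p_0}(\cc)|<1$.
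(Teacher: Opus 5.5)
Your proof is correct and is essentially the paper's argument: the same three exclusions ($\mathcal E$, $S(H)\setminus\mathcal G$, and vectors with small leading coefficient), the same Cauchy root bound to replace $\beta_\cc(x)$ by $\sign(c_d)$, and the same use of $|K_p(\cc)|\le1$ with one strict factor to get $|K_\cc|<1$. The only difference is your cutoff $|c_d|\le\theta^{1/3}H/x$ in place of the paper's $|c_d|<Hx^{-1/2}$. Your cutoff gives an $O(\theta^{-1/3})$ error for $\beta_\cc(x)$ rather than $O(x^{-1/2})$, and either is sufficient.
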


\begin{proof}
Exclude $\mathcal E$, $S(H)\setminus\mathcal G$, and the vectors with $|c_d|<Hx^{-1/2}$.
The last set has size $O(H^{d+1}x^{-1/2})=o(\theta^{-1/3}H^{d+1})$.
For every remaining vector, $|c_i|\le H$ for $i<d$ and
$|c_d|\ge Hx^{-1/2}$.
Thus every complex root $\alpha$ of $g_\cc$ satisfies
\[
|\alpha|\le1+\max_{i<d}|c_i/c_d|\le1+x^{1/2}
\]
by comparing the leading term with the lower-degree terms.
Hence
\begin{equation}\label{eq:beta-leading-sign}
\beta_\cc(x)=\sign(c_d)+O(x^{-1/2}).
\end{equation}
Insert this and the Euler product tail into the truncated asymptotic.
Since $x^{1/2}=o(\theta^{-1}x)$, the result is $K_\cc x+O(\theta^{-1}x)$.
The truncation error estimate gives the claimed final error.
The bounds on the local factors, with one strict inequality, give $|K_\cc|<1$.
\end{proof}

\subsection{Averages for almost all polynomials: \texorpdfstring{$j=0$}{j=0}}

\begin{proof}[Proof of \eqref{eq:main-fixed} in Theorem~\ref{thm:main}]
Let $\mathcal E$ be the exceptional set supplied by Corollary~\ref{cor:pointwise-trunc}.
Put
\begin{equation}\label{eq:def-B0}
\mathcal B_0(H)=\mathcal E\cup\bigl(S(H)\setminus\mathcal G_0(H)\bigr)
\cup\{\cc\in S(H):|c_d|<Hx^{-1/2}\}.
\end{equation}
Apply Lemma~\ref{lem:final-assembly} with
\[
w=\rw,\quad w_\eta=\rw(\,\cdot\,;\eta),\quad
K_{2,3}=\kappa_{2,3},\quad K_p=\kappa_p.
\]
Its hypotheses are Corollary~\ref{cor:pointwise-trunc}, Lemma~\ref{lem:generic-families}, Proposition~\ref{prop:asymp}, Corollary~\ref{cor:kappa-tail}, Lemma~\ref{lem:local-convexity}, and Lemma~\ref{lem:kappa-strict}.
Thus
\begin{equation}\label{eq:kappa-c-product}
\kappa_\cc=\sign(c_d)\kappa_{2,3}(\cc)\prod_{p\ge5}\kappa_p(\cc)
\end{equation}
satisfies $|\kappa_\cc|<1$, and \eqref{eq:main-fixed} holds for every $\cc\notin\mathcal B_0(H)$.
Moreover,
\[
\#\mathcal B_0(H)\ll\theta^{-1/3}H^{d+1}.
\]
\end{proof}

\section{The \texorpdfstring{$j=1728$}{j=1728} family}\label{sec:j1728-generic}

The arguments parallel those in Sections~\ref{sec:fixed-forms} and~\ref{sec:poly-sum}.
We use the local root numbers from Lemma~\ref{lem:W23-1728}, the factors defined in Definition~\ref{def:omega23-1728} and \eqref{eq:def-omega-p-star}, and the factorisation in Proposition~\ref{prop:rootnumber-factor-1728}.
The following table summarises the systematic substitutions; below we prove only the changes forced by the local formulas.
\begin{center}
\begin{tabular}{|l|c|c|}
\hline
 & \emph{$j=0$} & \emph{$j=1728$} \\
\hline
valuation period at $p\ge5$ & $6$ & $4$ \\
\hline
factors in $P$ & $3\nmid e_i$ & $e_i$ odd \\
\hline
factors in $Q$ & $e_i\equiv2,4\pmod6$ & $e_i\equiv2\pmod4$ \\
\hline
factors in $R$ & $6\nmid e_i$ & $4\nmid e_i$ \\
\hline
correcting character & $\bigl(\frac{-3}{p}\bigr)$ & $\bigl(\frac{-1}{p}\bigr)$ \\
\hline
auxiliary field & $\QQ(\sqrt{-3})$ & $\QQ(i)$ \\
\hline
small prime cell data & $(2,2,36)$ & $(4,1,48)$\\
\hline
\end{tabular}
\end{center}
The last row records $(b_2,b_3,T)$ in the cell notation of Proposition~\ref{prop:cell-average}.
Unstarred notation refers to the first family; a superscript $\star$ denotes its counterpart in the second family.

\subsection{Fixed forms}

The proof of Theorem~\ref{thm:asy}(ii) follows the same strategy as the proof of Theorem~\ref{thm:asy}(i), but the real sign has to be separated before the product of the finite local factors is periodic.
Recall $S_{\mathrm{prim}}(x)$ and $K_h$ from \eqref{eq:def-Sprim} and \eqref{eq:def-Kh}.

\begin{proof}[Proof of Theorem~\ref{thm:asy}(ii)]
Write
\[
G_{\mathrm{red}}=\prod_{i=1}^r g_i,\quad
Q_G=\prod_{e_i\equiv2\pmod4}g_i
\]
and put
\[
P_G=\prod_{e_i\text{ odd}}g_i,
\quad
R_G=\prod_{4\nmid e_i}g_i,
\]
with the convention that an empty product is $1$.
The form $P_G$ controls the large square exceptional set, and $R_G$ records the real sign.

Let $\Sigma$ be the finite set of primes $p$ such that either $p\mid 6c$, or $\overline{G_{\mathrm{red}}}$ is not squarefree in $\FF_p[x,y]$.
For $p\notin \Sigma$ and $\gcd(m,n)=1$, at most one of the values $g_i(m,n)$ is divisible by $p$.
For $p\ge 5$ put
\[
u_p^\star(m,n)
=\omega_p^\star(G(m,n))
\left(\frac{-1}{p}\right)^{-v_p(Q_G(m,n))}.
\]
Suppose that $p\notin\Sigma$ divides $G_{\mathrm{red}}(m,n)$, and let $g_j$ be the unique factor divisible by $p$.
Write $k=v_p(g_j(m,n))$.
If $e_j$ is odd, then $p^2\nmid P_G(m,n)$ forces $k=1$, so $v_p(G(m,n))\equiv1$ or $3\pmod4$ and $u_p^\star(m,n)=1$.
The same conclusion is immediate when $4\mid e_j$.
Finally, if $e_j\equiv2\pmod4$, then
\[
v_p(G(m,n))\equiv2k\pmod4,\quad v_p(Q_G(m,n))=k,
\]
so
\[
\omega_p^\star(G(m,n))
=\left(\frac{-1}{p}\right)^k,
\]
which is cancelled by the correcting factor in $u_p^\star$.
Thus $u_p^\star(m,n)=1$ whenever $p^2\nmid P_G(m,n)$; this is also immediate when $p\nmid G_{\mathrm{red}}(m,n)$.

For $z\neq 0$ define $z_2=z/2^{v_2(z)}$ and
\[
J_4(z)=\sign(z)\left(\frac{-1}{|z_2|}\right),
\quad
J_{-8}(z)=\sign(z)\left(\frac{-2}{|z_2|}\right).
\]
The functions $J_4$ and $J_{-8}$ depend only on the signed odd unit $z_2$ modulo $4$ and $8$, respectively.
Since $\bigl(\frac{-1}{3}\bigr)=-1$, we have
\begin{equation}\label{eq:asy-chi-star}
\prod_{p\ge 5}\left(\frac{-1}{p}\right)^{v_p(z)}
=(-1)^{v_3(z)}\sign(z)J_4(z)
\end{equation}
for every $z\neq0$.

For primitive $(m,n)$ with $G(m,n)\neq 0$, define
\begin{align*}
b_G(m,n)
&=-\sign(c)\,\rw_2^\star(G(m,n))J_{-8}(G(m,n))\,\omega_3^\star(G(m,n)) \\
&\quad \times (-1)^{v_3(Q_G(m,n))}J_4(Q_G(m,n)) \\
&\quad \times \prod_{\substack{q\in \Sigma\\ q\ge 5}}
\omega_q^\star(G(m,n))
\left(\frac{-1}{q}\right)^{-v_q(Q_G(m,n))}.
\end{align*}
Since $\omega_p^\star(t)=1$ unless $v_p(t)\equiv2\pmod4$, extending the product in Proposition~\ref{prop:rootnumber-factor-1728} to all primes $p\ge 5$ does not change it.
Multiplying and dividing by $\prod_{p\ge 5}\bigl(\frac{-1}{p}\bigr)^{v_p(Q_G(m,n))}$, and using \eqref{eq:asy-chi-star}, we obtain
\begin{equation}\label{eq:asy-factor-main-star}
\rw^\star(G(m,n))
=\sign(R_G(m,n))b_G(m,n)
\prod_{\substack{p\notin \Sigma\\ p\ge 5}}u_p^\star(m,n).
\end{equation}
Indeed, $\omega_2^\star(G)=\rw_2^\star(G)\sign(G)J_{-8}(G)$, and the exponent of $g_i$ in $G\cdot Q_G$ is odd precisely when $4\nmid e_i$.
Hence
\[
\sign(G(m,n))\sign(Q_G(m,n))=\sign(c)\sign(R_G(m,n)).
\]

We verify the hypotheses of Lemma~\ref{lem:periodic-approximation} with $D=G_{\mathrm{red}}$, $P=P_G$, $R=R_G$, $b=b_G$, and $u_p=u_p^\star$.
Given integers $H_q\ge1$ for $q\in\Sigma$, consider a pair satisfying $q^{H_q}\nmid G_{\mathrm{red}}(m,n)$ for every $q\in\Sigma$.
Then $v_q(g_i(m,n))\le H_q-1$ for all $i$, and therefore
\begin{align*}
v_q(G(m,n))&\le L_q:=v_q(c)+\sum_i e_i(H_q-1),\\
v_q(Q_G(m,n))&\le L_q':=\sum_{e_i\equiv2\pmod4}(H_q-1).
\end{align*}
Set
\[
B_2=\max(L_2+4,L_2'+2),\quad
B_3=1+\max(L_3,L_3'),
\]
and, for $q\in \Sigma$ with $q\ge 5$,
\[
B_q=1+\max(L_q,L_q').
\]
If primitive pairs $(m,n)$ and $(m',n')$ are congruent modulo
\[
M_H=2^{B_2}3^{B_3}
\prod_{\substack{q\in \Sigma\\q\ge5}}q^{B_q}
\]
and satisfy the preceding nondivisibility conditions, then \eqref{eq:asy-valuations} shows that $b_G$ is constant on their residue class: at $2$ one needs the signed unit of $G$ modulo $16$ and that of $Q_G$ modulo $4$; at $3$ only the relevant valuations occur; and at $q\ge5$ the factors depend on the two valuations.
Extend $b_G$ arbitrarily across the excluded classes.
Likewise, if $p\notin\Sigma$ and $p^K\nmid P_G(m,n)$, the valuation below $K$ of the unique divisible odd-exponent factor is determined modulo $p^K$; the even-exponent cases give $u_p^\star=1$ by the preceding calculation.
Hence the hypotheses of Lemma~\ref{lem:periodic-approximation} hold, giving the required limit $C_G^\star$ and the bound $|C_G^\star|\le1$.
Since $G$ is nonzero, the pairs in $S_{\mathrm{prim}}(N)$ for which $G(m,n)=0$ number $O_G(N)$.
Away from this density-zero set, for each $\varepsilon\in\{\pm1\}$,
\[
\mathbf 1_{\{\rw^\star(G(m,n))=\varepsilon\}}
=\frac{1+\varepsilon\rw^\star(G(m,n))}{2}.
\]
Taking averages proves the asserted density formula.

Assume now that there exists $j_0$ with $e=e_{j_0}\equiv1$ or $3\pmod4$ and
\[
\QQ(i)\not\subset K_{g_{j_0}}.
\]
Write $g_0=g_{j_0}$.
By Lemma~\ref{lem:chebotarev-projective}, there are infinitely many primes $q\equiv3\pmod4$ for which $\{g_0=0\}$ has a simple point in $\PP^1(\FF_q)$.
Choose one such prime with $q\notin\Sigma$, $q\ge Y_0(P_G)$, and $q>\deg G_{\mathrm{red}}$.
By Lemma~\ref{lem:exact-valuation-classes}, there are two primitive classes modulo $q^3$: on the first $q\nmid G_{\mathrm{red}}(m,n)$, while on the second
\[
v_q(g_0(m,n))=2,\quad q\nmid g_j(m,n)\quad\text{for }j\ne j_0.
\]

For each $\ell\in\Sigma$, choose a primitive $\mathbf a_\ell\in\ZZ_\ell^2$ with $G_{\mathrm{red}}(\mathbf a_\ell)\ne0$, and then choose $B_\ell$ large enough that every factor in $b_G$ is fixed on $\mathbf a_\ell\pmod{\ell^{B_\ell}}$.
Put
\[
M_0=\prod_{\ell\in\Sigma}\ell^{B_\ell}.
\]
The Chinese remainder theorem combines these common local conditions with the two $q$-adic classes into two primitive classes modulo $M=M_0q^3$.
Choose a nonempty open polygonal region $\mathcal R\subset(0,1]^2$ on which $\sign(R_G)$ is constant.
Every prime at which $\{P_G=0\}$ is not squarefree belongs to $\Sigma$, because $P_G$ is a subproduct of $G_{\mathrm{red}}$; hence it divides $M$.

Lemma~\ref{lem:squarefree-class-density}, applied to $P_G$, the sector $\mathcal R$, and the two classes modulo $M$, gives sets of positive lower density $\mathcal A_N,\mathcal B_N\subset S_{\mathrm{prim}}(N)$ in the chosen sector, belonging to the two respective congruence classes and satisfying $p^2\nmid P_G(m,n)$ for every $p\nmid M$.
Removing the $O_G(N)$ points on which $G(m,n)=0$ does not change these positive lower densities.

For $p\nmid M$, good reduction and the condition $p^2\nmid P_G(m,n)$ give $u_p^\star(m,n)=1$.
The local data at every prime dividing $M$ other than $q$ agree by construction, and $\sign(R_G)$ is constant on $\mathcal R$.
On $\mathcal A_N$, we have $q\nmid G_{\mathrm{red}}(m,n)$, so $u_q^\star(m,n)=1$.
On $\mathcal B_N$, we have $v_q(g_0(m,n))=2$.
Since $e$ is odd, $v_q(G(m,n))\equiv2\pmod4$ and $v_q(Q_G(m,n))=0$, so
\[
u_q^\star(m,n)=\omega_q^\star(G(m,n))
=\left(\frac{-1}{q}\right)=-1.
\]
Thus the two constructed sets of positive lower density have constant and opposite root numbers.
Since $\#S_{\mathrm{prim}}(N)\sim(6/\pi^2)N^2$, both signs occur with positive relative density inside $S_{\mathrm{prim}}(N)$, and the existing average satisfies $|C_G^\star|<1$.
\end{proof}

\subsection{Averages for almost all polynomials: \texorpdfstring{$j=1728$}{j=1728}}

Let $d\ge2$, take $H$ sufficiently large in terms of $d$, and put
\[
x=H^{1/(4d)},\quad
\eta=\frac{\log x}{\log\log x},\quad
\theta=\frac{\log\log x}{\log\log\log x}.
\]
We recall $S(H)$ and $g_\cc$ from \eqref{eq:def-SH} and \eqref{eq:def-gc}, and $\nu_\cc$ from \eqref{eq:def-nu}.
The root number $\rw^\star(n)$ and the factors $\omega_p^\star(n)$ are as in Proposition~\ref{prop:rootnumber-factor-1728}, Definition~\ref{def:omega23-1728}, and \eqref{eq:def-omega-p-star}.
We also recall the starred-sum convention \eqref{eq:starred-sum-convention}.

For $n\in\ZZ\setminus\{0\}$ define the function
\begin{equation}\label{eq:def-a-star}
a^\star(n)=\prod_{\substack{p^2\mid n\\ p\ge 5}}\omega_p^\star(n).
\end{equation}
Its restriction to $\NN$ is multiplicative and satisfies $a^\star(-n)=a^\star(n)$.
Let $h^\star=a^\star*\mu$ on $\NN$, so $h^\star$ is multiplicative, satisfies $h^\star(p)=0$ by \eqref{eq:def-omega-p-star}, and is supported on squarefull integers.
Moreover, $h^\star(d)=0$ whenever $\gcd(d,6)>1$.
For $z\in[5,\infty]$ and $n\ne0$, define
\[
a^\star(n;z)=\sum_{\substack{d\mid n\\P^+(d)\le z}}h^\star(d),\quad
c^\star(u,q;z)=\sum_{\substack{d\ge1,\ P^+(d)\le z\\\gcd(d,q)\mid u}}
h^\star(d)\frac{\gcd(d,q)}d,
\]
where the restrictions involving $P^+(d)$ are omitted when $z=\infty$; also set $a^\star(0;z)=0$.
These are the truncations associated with $a^\star$ in Lemma~\ref{lem:squarefull-kernel}, and we write $c^\star(u,q)=c^\star(u,q;\infty)$.

For $r,s\ge 0$ set
\[
M^\star_{r,s}=2^{r+4}3^{s+1},
\quad
Q^\star_{r,s}(q)=\operatorname{lcm}(q,M^\star_{r,s}).
\]
For $q\ge 1$, $u\pmod q$ and $t\bmod48$ with $\gcd(t,6)=1$ define
\[
\lambda^\star_{r,s,t}(u,q)=
\begin{cases}
\dfrac{q}{Q^\star_{r,s}(q)} & \text{if } u\equiv 2^{r}3^{s}t
\pmod{\gcd(q,M^\star_{r,s})},\\
0 & \text{otherwise}.
\end{cases}
\]
We then set
\begin{equation}\label{eq:def-rho-star}
\rho^\star(u,q)=\sum_{r,s\ge 0}\ \sumstar_{t\bmod48}
\bigl(-\omega_2^\star(2^{r}3^{s}t)\,\omega_3^\star(2^{r}3^{s}t)\bigr)\,
\lambda^\star_{r,s,t}(u,q),
\end{equation}
whose defining series converges absolutely.

For $n\ne0$ and $z\in[5,\infty]$, define
\begin{equation}\label{eq:rw-star-z}
\rw^\star(n;z)
=-\omega_2^\star(n)\omega_3^\star(n)a^\star(n;z),
\quad \rw^\star(0;z)=0.
\end{equation}
Thus $\rw^\star(n;\infty)=\rw^\star(n)$, while for finite $z$ this is the root number product truncated at $p\le z$.

\begin{proposition}\label{prop:rw-interval-star}
Let $q\ge1$, let $u\pmod q$, and let $I\subset\RR$ be a bounded interval.
With $I_+,I_-$ and $\langle I\rangle$ as in \eqref{eq:interval-notation}, for $z\in[5,\infty]$ and every $\varepsilon>0$,
\[
\sum_{\substack{n\in I\\ n\equiv u\pmod q}}\rw^\star(n;z)
=\rho^\star(u,q)c^\star(u,q;z)
\frac{|I_+|-|I_-|}{q}
+O\bigl(\langle I\rangle^{1/2+\varepsilon}\bigr),
\]
uniformly in $q,u,I,z$.
Moreover $|\rho^\star(u,q)|\le1$, and $\rho^\star(u,q)$ depends on $u$ only modulo $2^{v_2(q)}3^{v_3(q)}$.
\end{proposition}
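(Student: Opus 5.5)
The plan is to mirror the proof of Proposition~\ref{prop:rw-interval} and deduce everything from Proposition~\ref{prop:cell-progressions}. We take $A=a^\star$, $(b_2,b_3)=(4,1)$, $T=48$ and
\[
\sigma_{r,s,t}=-\omega_2^\star(2^r3^st)\,\omega_3^\star(2^r3^st).
\]
First, $a^\star$ satisfies the hypotheses on $A$. It is multiplicative with values $\pm1$. We have $a^\star(p)=1$ for $p\ge5$, because $\omega_p^\star(n)=1$ when $v_p(n)=1$. We also have $a^\star(p^r)=1$ for $p\in\{2,3\}$, since only primes $p\ge5$ enter \eqref{eq:def-a-star}. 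With these choices, $C_A(u,q;z)=c^\star(u,q;z)$, $M_{r,s}=M^\star_{r,s}$, $\lambda_{r,s,t}=\lambda^\star_{r,s,t}$ and $\rho_w=\rho^\star$. So the displayed asymptotic, the bound $|\rho^\star|\le1$ and the dependence of $\rho^\star(u,q)$ only on $u$ modulo $2^{v_2(q)}3^{v_3(q)}$ all follow at once. This holds provided we can verify the cell law
\[
\rw^\star(n;z)=\sign(n)\,\sigma_{r,s,t}\,a^\star(|n|;z)
\]
for $n\ne0$, where $r=v_2(n)$, $s=v_3(n)$ and $t\equiv n/(2^r3^s)\pmod{48}$ is taken at its least positive representative.

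Second, we verify the cell law for positive $n$. Since $a^\star$ is even, $a^\star(n;z)=a^\star(|n|;z)$. The cell fixes $r$, $s$ and the $2$-adic unit $n_2=3^st'$ modulo $16$, where $t'=n/(2^r3^s)$. By Lemma~\ref{lem:W23-1728}, $\rw_2^\star(n)$ depends only on $r\bmod4$ and $n_2\bmod16$, and $\rw_3^\star(n)$ depends only on $s$. The Jacobi factor $\bigl(\frac{-2}{|n_2|}\bigr)$ equals $\bigl(\frac{-2}{3}\bigr)^s\bigl(\frac{-2}{t'}\bigr)=\bigl(\frac{-2}{t'}\bigr)$, which depends only on $t'\bmod8$. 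So $\omega_2^\star(n)\omega_3^\star(n)=\omega_2^\star(2^r3^st)\omega_3^\star(2^r3^st)$, and the modulus $16\cdot3=48$ suffices.

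Third, and this is the only real check, we verify the cell law for negative $n=-m$. Here the cell class is $t\equiv -m/(2^r3^s)\pmod{48}$. The local root numbers $\rw_2^\star$ and $\rw_3^\star$ are again read off from the signed unit $n_2$ modulo $16$ and from $s$, so they agree with the values at the positive representative $2^r3^st$. The Jacobi factor $\bigl(\frac{-2}{|n_2|}\bigr)$, however, is evaluated at $|n_2|\equiv -n_2\pmod8$, whereas $\sigma$ uses $\bigl(\frac{-2}{t}\bigr)$ with $t\equiv n_2\pmod8$. Since $\bigl(\frac{-2}{\cdot}\bigr)$ is the odd character $J_{-8}$ (that is, $\bigl(\frac{-2}{-1}\bigr)=-1$ in the sense of $\chi_{-8}(-1)=-1$), these two factors differ by exactly $-1=\sign(n)$, as in the identity $\omega_2^\star(G)=\rw_2^\star(G)\sign(G)J_{-8}(G)$ already used in the proof of Theorem~\ref{thm:asy}(ii).

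The main obstacle is this sign bookkeeping for negative $n$ and the least-positive-representative convention. I would check it with one explicit residue, for example $n=-1$ against $t=47$, before invoking Proposition~\ref{prop:cell-progressions} with $z\in[5,\infty]$ to conclude.
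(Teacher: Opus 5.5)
Your proposal is correct and follows the paper's proof. You apply Proposition~\ref{prop:cell-progressions} with $A=a^\star$, $(b_2,b_3)=(4,1)$ and $\sigma_{r,s,t}=-\omega_2^\star(2^r3^st)\,\omega_3^\star(2^r3^st)$, and the only nontrivial check is that for negative $n$ the identity $\bigl(\frac{-2}{|n_2|}\bigr)=-\bigl(\frac{-2}{3^st}\bigr)$ supplies the factor $\sign(n)$. One small slip: in the negative case $n_2$ should be compared with $3^st$ modulo $16$, not with $t$ modulo $8$, but this is harmless because $\bigl(\frac{-2}{3}\bigr)=1$.
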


\begin{proof}
Apply Proposition~\ref{prop:cell-progressions} with $A=a^\star$, $(b_2,b_3)=(4,1)$, and
\[
\sigma_{r,s,t}
=-\omega_2^\star(2^r3^s t)\omega_3^\star(2^r3^s t).
\]
For positive $n$, Proposition~\ref{prop:rootnumber-factor-1728} gives the required cell law: the cell fixes the odd part modulo $16$ and the $3$-adic valuation modulo $4$.
For negative $n$, the same cell fixes the local root numbers; with $n_2$ denoting the odd part from \eqref{eq:def-odd-part-star},
\[
\left(\frac{-2}{|n_2|}\right)
=-\left(\frac{-2}{3^s t}\right),
\]
which supplies precisely the factor $\sign(n)$.
Proposition~\ref{prop:cell-progressions} now gives all the stated assertions.
\end{proof}

\begin{lemma}\label{lem:j1728-trunc-AP}
For each fixed $d\ge2$ and every fixed $0<\varepsilon<1/18$, the function $n\mapsto\rw^\star(n)-\rw^\star(n;\eta)$ satisfies hypotheses \textup{(B)} and \textup{(AP)} of Theorem~\ref{thm:transference}.
\end{lemma}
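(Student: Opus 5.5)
The proof will mirror that of Lemma~\ref{lem:j0-trunc-AP}, with Proposition~\ref{prop:rw-interval-star} in place of Proposition~\ref{prop:rw-interval} and Lemma~\ref{lem:squarefull-tail} in place of Proposition~\ref{prop:diff}.

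Hypothesis \textup{(B)} holds trivially, since both $\rw^\star(n)$ and $\rw^\star(n;\eta)$ lie in $\{0,\pm1\}$. Hence their difference is bounded by $2$.

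For \textup{(AP)}, fix $1\le q\le x^d$ and a class $u\pmod q$ such that no prime $p>\eta$ has $p^2\mid\gcd(u,q)$. Fix also an interval $I\subset[-(d+3)Hx^d,(d+3)Hx^d]$ with $|I|\ge H^{1-\varepsilon}$. I would apply Proposition~\ref{prop:rw-interval-star} twice, once with $z=\infty$ and once with $z=\eta$ (legitimate, since $\eta\ge5$ for large $x$), and subtract. The main terms combine to
\[
\rho^\star(u,q)\bigl(c^\star(u,q)-c^\star(u,q;\eta)\bigr)\frac{|I_+|-|I_-|}{q}.
\]
Since $|\rho^\star(u,q)|\le1$ and $\bigl||I_+|-|I_-|\bigr|\le|I|$, it remains to bound the difference of the $c^\star$ factors.

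That bound comes from Lemma~\ref{lem:squarefull-tail} applied with $A=a^\star$. Its hypotheses must be checked:
\begin{enumerate}
\item $a^\star$ is multiplicative on $\NN$ with values in $\{\pm1\}$.
\item $a^\star(p)=1$ for $p\ge5$, because $\omega_p^\star(n)=1$ unless $v_p(n)\equiv2\pmod4$.
\item $a^\star(p^r)=1$ for $p\in\{2,3\}$, because the product in \eqref{eq:def-a-star} runs only over $p\ge5$.
\end{enumerate}
Moreover, $c^\star(u,q;z)$ is exactly $C_{a^\star}(u,q;z)$. The lemma then gives $|c^\star(u,q)-c^\star(u,q;\eta)|\ll\theta^{-1}$ under the admissibility condition on $(u,q)$. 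The normalized main-term contribution is therefore $O(\theta^{-1})$.

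The two error terms are each $O(\langle I\rangle^{1/2+\varepsilon})$ with $\langle I\rangle\ll Hx^d$. After multiplying by $q/|I|\le x^d/H^{1-\varepsilon}$ and using $H=x^{4d}$, they contribute
\[
\ll x^{d}\,\frac{(Hx^d)^{1/2+\varepsilon}}{H^{1-\varepsilon}}=x^{-d(1/2-9\varepsilon)}.
\]
This is $o(\theta^{-1})$ precisely because $\varepsilon<1/18$.

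The only step needing genuine care is confirming that $a^\star$ satisfies the hypotheses of Lemma~\ref{lem:squarefull-tail}, and in particular that the convention of extending $a^\star$ evenly to negative integers matches the cell law used in Proposition~\ref{prop:rw-interval-star}. Both points were already settled when $a^\star$, $h^\star$ and $\rho^\star$ were defined, so no new obstacle is expected.
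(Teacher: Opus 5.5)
Your proposal is correct and follows the paper's proof essentially verbatim. You subtract the $z=\eta$ case of Proposition~\ref{prop:rw-interval-star} from the $z=\infty$ case, bound $c^\star(u,q)-c^\star(u,q;\eta)$ by Lemma~\ref{lem:squarefull-tail} with $A=a^\star$, and reuse the error-term computation $x^{-d(1/2-9\varepsilon)}$ from Lemma~\ref{lem:j0-trunc-AP}.

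One small point: $\varepsilon<1/18$ is sufficient rather than ``precisely'' what is needed. The exponent in the interval estimate can be chosen independently of the $\varepsilon$ in $|I|\ge H^{1-\varepsilon}$.
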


\begin{proof}
Subtract the $z=\eta$ case of Proposition~\ref{prop:rw-interval-star} from its $z=\infty$ case.
On an admissible progression, Lemma~\ref{lem:squarefull-tail}, applied to $A=a^\star$, gives
\[
|c^\star(u,q)-c^\star(u,q;\eta)|\ll\theta^{-1}.
\]
The error term calculation in Lemma~\ref{lem:j0-trunc-AP} applies without change and proves \textup{(AP)}.
Hypothesis \textup{(B)} is immediate.
\end{proof}

\begin{corollary}\label{cor:pointwise-trunc-star}
There exists a subset $\mathcal E^\star \subset S(H)$ with
\[
\#\mathcal E^\star \ll \theta^{-\frac{1}{3}}\,H^{d+1},
\]
such that for every $\cc\in S(H)\setminus\mathcal E^\star$, we have
\[
\left|
\sum_{1\le n\le x}
\Bigl(\rw^\star\bigl(g_{\cc}(n)\bigr)-\rw^\star\bigl(g_{\cc}(n); \eta\bigr)\Bigr)
\right|
\ll \theta^{-\frac{1}{3}}\,x.
\]
\end{corollary}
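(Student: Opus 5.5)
This follows the proof of Corollary~\ref{cor:pointwise-trunc} verbatim, with Lemma~\ref{lem:j1728-trunc-AP} in place of Lemma~\ref{lem:j0-trunc-AP}. Put $f(n)=\rw^\star(n)-\rw^\star(n;\eta)$. The dependence of $f$ on $H$ enters only through $\eta$. By Lemma~\ref{lem:j1728-trunc-AP}, applied with a fixed $\varepsilon\in(0,1/18)$, $f$ satisfies (B) and (AP) of Theorem~\ref{thm:transference}, and the implied constants are uniform in $H$. This uniformity holds because the bounds come from Proposition~\ref{prop:rw-interval-star} and Lemma~\ref{lem:squarefull-tail}, whose constants depend only on $d$ and $\varepsilon$.

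Next, fix the coefficients $c_i$ with $i\notin\{1,2\}$, subject to $|c_i|\le H$ and $c_0\ne0$. Set
\[
g(n)=\sum_{\substack{0\le i\le d\\ i\notin\{1,2\}}}c_in^i.
\]
Then $g_\cc(n)=c_1n+c_2n^2+g(n)$, and $g$ has degree at most $d$, coefficients in $[-H,H]$, and $g(0)=c_0\ne0$. Theorem~\ref{thm:transference} with $(a,b)=(c_1,c_2)$ gives
\[
\sum_{|c_1|,|c_2|\le H}\Bigl|\sum_{1\le n\le x}f(g_\cc(n))\Bigr|^2\ll\theta^{-1}H^2x^2 .
\]
There are $O(H^{d-1})$ choices of the remaining coefficients. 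Summing over them bounds the full second moment over $\cc\in S(H)$ by $O(\theta^{-1}H^{d+1}x^2)$.

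Finally, let $\mathcal E^\star$ be the set of $\cc\in S(H)$ for which the inner sum exceeds $\theta^{-1/3}x$ in absolute value. Chebyshev's inequality gives
\[
\#\mathcal E^\star\cdot\theta^{-2/3}x^2\ll\theta^{-1}H^{d+1}x^2,
\]
so $\#\mathcal E^\star\ll\theta^{-1/3}H^{d+1}$. The stated bound holds on the complement by construction.

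No step here is a genuine obstacle. The substantive inputs are the transference theorem and the verification of (AP), and both are already established. The only points to check are that the constant-term condition $c_0\ne0$ passes to the auxiliary polynomial $g$, and that the admissibility restriction in (AP) matches the hypothesis of Lemma~\ref{lem:squarefull-tail}; both are immediate.
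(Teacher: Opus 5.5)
Your proposal is correct and is exactly the paper's argument. The paper proves this corollary by rerunning the proof of Corollary~\ref{cor:pointwise-trunc}: it applies Theorem~\ref{thm:transference} with $(a,b)=(c_1,c_2)$, sums over the remaining $O(H^{d-1})$ coefficients, and then applies Chebyshev's inequality, with Lemma~\ref{lem:j1728-trunc-AP} supplying hypotheses (B) and (AP).
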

\begin{proof}
This follows from the proof of Corollary~\ref{cor:pointwise-trunc}, using the starred progression estimate in Lemma~\ref{lem:j1728-trunc-AP}.
\end{proof}

Lemma~\ref{lem:generic-families}, applied to the set $\mathcal G_{1728}(H)$ defined in \eqref{eq:def-Gfamilies}, gives
\[
\#\bigl(S(H)\setminus\mathcal G_{1728}(H)\bigr)
\ll\theta^{-d}H^{d+1},
\]
and this contribution is included in the final exceptional set.

For the large prime factors, recall $h^\star=a^\star*\mu$ and $\xi_\cc(p^r)=\nu_\cc(p^r)/p^r$ from \eqref{eq:def-xi}, and put
\begin{equation}\label{eq:kappa-p-def-star}
\kappa_p^\star(\cc)=1+\sum_{r\ge 2}h^\star(p^r)\,\xi_\cc(p^r).
\end{equation}

We next show that one local factor is strictly smaller than $1$ in absolute value.

\begin{lemma}\label{lem:kappa-strict-star}
For $\cc\in \mathcal G_{1728}(H)$ there exists a prime $p\ge 5$ such that $|\kappa_p^\star(\cc)|<1$.
\end{lemma}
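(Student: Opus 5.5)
We follow the proof of Lemma~\ref{lem:kappa-strict} line by line, replacing $\QQ(\sqrt{-3})$ by $\QQ(i)$ and the period $6$ by $4$.

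\emph{Choice of the prime.} Let $f_\cc$ be the primitive homogenization of $g_\cc$. Its residue field $K_{f_\cc}$ embeds into $L_\cc$. By \eqref{eq:def-Gfamilies}, $\QQ(i)\not\subset L_\cc$, so $\QQ(i)\not\subset K_{f_\cc}$. Lemma~\ref{lem:chebotarev-projective}, applied with $K=\QQ(i)$, then gives infinitely many primes $p$ inert in $\QQ(i)$, that is $p\equiv3\pmod4$, for which $\{f_\cc=0\}$ has a simple point in $\PP^1(\FF_p)$. We choose such a $p\ge5$ not dividing $c_d\Delta(g_\cc)$. The point is then affine, and the root of $g_\cc$ modulo $p$ is simple. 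Hensel lifting gives
\[
\nu_\cc(p^r)=\nu_\cc(p)\ge1\quad\text{for all } r\ge1,
\]
and therefore
\[
\mu_{p,r}(\cc)=\frac{\nu_\cc(p)}{p^r}\Bigl(1-\frac1p\Bigr)>0\quad\text{for } r\ge1.
\]

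\emph{Writing $\kappa_p^\star$ as a weighted average.} Since $h^\star=a^\star*\mu$, we have $h^\star(p^r)=a^\star(p^r)-a^\star(p^{r-1})$. Also $a^\star(1)=a^\star(p)=1$, because $v_p(p)=1\not\equiv2\pmod4$. Hence Lemma~\ref{lem:local-convexity}, applied with $b=a^\star$ and compared with \eqref{eq:kappa-p-def-star}, gives
\[
\kappa_p^\star(\cc)=\sum_{r\ge0}a^\star(p^r)\mu_{p,r}(\cc).
\]
By \eqref{eq:def-omega-p-star} and \eqref{eq:def-a-star}, together with $p\equiv3\pmod4$:
\begin{itemize}
\item $a^\star(p^2)=\bigl(\frac{-1}{p}\bigr)=-1$, since $v_p(p^2)=2\equiv2\pmod4$;
\item $a^\star(p^3)=1$, since $v_p(p^3)=3\not\equiv2\pmod4$.
\end{itemize}
So the sign $-1$ carries weight $\mu_{p,2}(\cc)>0$ and the sign $+1$ carries weight at least $\mu_{p,1}(\cc)>0$. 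The weights $\mu_{p,r}(\cc)$ are nonnegative and sum to $1$, so the strict triangle inequality yields $|\kappa_p^\star(\cc)|<1$.

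\emph{Main point to check.} The only step needing care is the Chebotarev input. We must confirm that Lemma~\ref{lem:chebotarev-projective} produces primes that are inert in $\QQ(i)$, that is $p\equiv3\pmod4$, rather than merely non-split in some other sense. We also need that excluding the finitely many primes dividing $6c_d\Delta(g_\cc)$ still leaves such a prime. Both follow because the lemma gives infinitely many primes. Everything else is the routine substitution recorded in the table at the start of this section.
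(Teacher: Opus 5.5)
Your proposal is correct and follows essentially the same route as the paper: the Chebotarev lemma gives an inert prime $p\equiv3\pmod4$ away from $c_d\Delta(g_\cc)$, Hensel lifting gives $\mu_{p,r}(\cc)>0$ for $r\ge1$, and Lemma~\ref{lem:local-convexity} writes $\kappa_p^\star(\cc)$ as a weighted average of signs. In that average $a^\star(p^2)=-1$ gets positive weight. The only difference is that you take the positive weight from $a^\star(p)=1$ (via $\mu_{p,1}$), whereas the paper uses $a^\star(p^4)=1$ (via $\mu_{p,4}$); this makes no difference to the argument.
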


\begin{proof}
Let $f_\cc$ be the primitive homogenization of $g_\cc$.
Its root field, in the notation of \eqref{eq:def-Kh}, is $K_{f_\cc}$, which embeds into the splitting field $L_\cc$.
Equation~\eqref{eq:def-Gfamilies} gives $\QQ(i)\not\subset L_\cc$, and hence $\QQ(i)\not\subset K_{f_\cc}$.
Lemma~\ref{lem:chebotarev-projective} supplies an inert prime $p\ge5$ for which $\{f_\cc=0\}$ has a simple projective point.
Choose $p$ away from the leading coefficient and discriminant of $g_\cc$.
The point is then affine, $p\equiv3\pmod4$, and
\[
\nu_\cc(p^r)=\nu_\cc(p)\ge1\quad\text{for every }r\ge1.
\]

By \eqref{eq:kappa-p-def-star}, the identity
$h^\star(p^r)=a^\star(p^r)-a^\star(p^{r-1})$, and
Lemma~\ref{lem:local-convexity} applied with $b=a^\star$, we have
\[
\kappa_p^\star(\cc)
=K_{p,a^\star}(\cc)
=\sum_{r\ge 0}a^\star(p^r)\mu_{p,r}(\cc).
\]
Since $p\equiv 3\pmod 4$, from \eqref{eq:def-omega-p-star} we have
\[
a^\star(p^2)=\left(\frac{-1}{p}\right)=-1,\quad
a^\star(p^4)=1.
\]
Moreover, Hensel lifting gives
\[
\mu_{p,2}(\cc)=\frac{\nu_\cc(p)}{p^2}\left(1-\frac1p\right)>0,\quad
\mu_{p,4}(\cc)=\frac{\nu_\cc(p)}{p^4}\left(1-\frac1p\right)>0.
\]
Hence in the weighted average for $\kappa_p^\star(\cc)$, both signs $-1$ and $+1$ occur with positive weight.
Therefore $|\kappa_p^\star(\cc)|<1$.
\end{proof}

\begin{proposition}\label{prop:kappa-tail-star}
Let $\cc\in \mathcal G_{\mathrm{arith}}(H)$.
The Euler product $\prod_{p\ge 5}\kappa_p^\star(\cc)$ converges, and
\[
\prod_{5\le p\le \eta}\kappa_p^\star(\cc)
=\prod_{p\ge 5}\kappa_p^\star(\cc)+O(\theta^{-1}).
\]
\end{proposition}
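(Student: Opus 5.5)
The plan is to reduce Proposition~\ref{prop:kappa-tail-star} to Proposition~\ref{prop:local-product-tail}, exactly as Corollary~\ref{cor:kappa-tail} was deduced in the $j=0$ case. For this I will take the signs $b(p^r)=a^\star(p^r)$ for every prime $p\ge5$ and every $r\ge0$, and check that this choice meets the hypotheses of Proposition~\ref{prop:local-product-tail}.

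\emph{Admissibility of the signs.} By \eqref{eq:def-a-star} and \eqref{eq:def-omega-p-star}, each value $a^\star(p^r)$ lies in $\{\pm1\}$. The empty product gives $a^\star(1)=1$. Since $p^2\nmid p$, the product defining $a^\star(p)$ is also empty, so $a^\star(p)=1$. Hence $b(1)=b(p)=1$, as Proposition~\ref{prop:local-product-tail} requires.

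\emph{Identification of the local factors.} Next I will show that $K_{p,a^\star}(\cc)=\kappa_p^\star(\cc)$. By Lemma~\ref{lem:local-convexity},
\[
K_{p,a^\star}(\cc)=1+\sum_{r\ge2}\bigl(a^\star(p^r)-a^\star(p^{r-1})\bigr)\xi_\cc(p^r).
\]
Because $h^\star=a^\star*\mu$, we have $h^\star(p^r)=a^\star(p^r)-a^\star(p^{r-1})$. Substituting this into the display gives exactly the definition \eqref{eq:kappa-p-def-star} of $\kappa_p^\star(\cc)$.

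\emph{Conclusion.} Proposition~\ref{prop:local-product-tail} now gives the convergence of $\prod_{p\ge5}\kappa_p^\star(\cc)$ and the stated $O(\theta^{-1})$ truncation estimate, uniformly in $\cc\in\mathcal G_{\mathrm{arith}}(H)$. I do not expect a real obstacle here. The only point needing care is that the tail estimate in Proposition~\ref{prop:local-product-tail} depends only on the bound $|b(p^r)-1|\le2$ and on the inequality $\sum_{r\ge2}\mu_{p,r}(\cc)=\xi_\cc(p^2)$. Neither depends on the valuation period being $4$ rather than $6$, so the argument transfers from the $j=0$ case without change.
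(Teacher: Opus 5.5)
Your proposal is correct and follows essentially the same route as the paper: apply Proposition~\ref{prop:local-product-tail} with $b=a^\star$, after checking $a^\star(1)=a^\star(p)=1$, and identify $K_{p,a^\star}(\cc)$ with $\kappa_p^\star(\cc)$ via $h^\star(p^r)=a^\star(p^r)-a^\star(p^{r-1})$. The only slip is a wording one: $\sum_{r\ge2}\mu_{p,r}(\cc)=\xi_\cc(p^2)$ is an identity, not an inequality.
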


\begin{proof}
Apply Proposition~\ref{prop:local-product-tail} with $b=a^\star$.
The identity $K_{p,b}(\cc)=\kappa_p^\star(\cc)$ follows from $h^\star(p^r)=a^\star(p^r)-a^\star(p^{r-1})$ and \eqref{eq:kappa-p-def-star}.
\end{proof}

The contribution for small primes is as follows.
\begin{proposition}\label{prop:asymp-star}
For every $\varepsilon>0$ and every $\cc\in \mathcal G_{\mathrm{arith}}(H)$,
\begin{equation*}
\sum_{1\le n\le x}\rw^\star\bigl(g_\cc(n); \eta\bigr)
=\beta_\cc(x)\kappa_{2,3}^\star(\cc)
\Bigg(\prod_{5\le p\le \eta}\kappa_p^\star(\cc)\Bigg)\,x
+O\bigl(x^{\varepsilon}\bigr),
\end{equation*}
where $\beta_\cc(x)$ is defined in \eqref{eq:beta-c-def}, $\kappa_p^\star(\cc)$ is as in \eqref{eq:kappa-p-def-star}, and
\begin{equation}\label{eq:kappa23-def-star}
\kappa_{2,3}^\star(\cc)
=\sum_{r,s\ge 0}\ \sumstar_{t\bmod48}
\bigl(-\omega_2^\star(2^{r}3^{s}t)\,\omega_3^\star(2^{r}3^{s}t)\bigr)\,
\lambda_{r,s,t}^\star(\cc),
\end{equation}
with
\[
\lambda_{r,s,t}^\star(\cc)
=\frac{1}{M_{r,s}^\star}\,
\#\Bigl\{u\bmod M_{r,s}^\star:
g_\cc(u)\equiv2^r3^s t\pmod{M_{r,s}^\star}\Bigr\},
\]
and $M_{r,s}^\star=2^{r+4}3^{s+1}$.
In particular, $|\kappa_{2,3}^\star(\cc)|\le 1$.
\end{proposition}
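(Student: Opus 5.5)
The plan mirrors the proof of Proposition~\ref{prop:asymp}: the statement is an instance of the general cell average, Proposition~\ref{prop:cell-average}. We apply it with $A=a^\star$, cell data $(b_2,b_3,T)=(4,1,48)$, so that $M_{r,s}=2^{r+4}3^{s+1}=M^\star_{r,s}$, and signs
\[
\sigma_{r,s,t}=-\omega_2^\star(2^r3^st)\,\omega_3^\star(2^r3^st).
\]
Three hypotheses need to be checked. First, $a^\star$ maps $\NN$ to $\{\pm1\}$ and is multiplicative. Second, $a^\star(p)=1$ for $p\ge5$, since the product in \eqref{eq:def-a-star} only runs over $p^2\mid n$. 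Third, $a^\star(p^r)=1$ for $p\in\{2,3\}$, because those primes are excluded from the product. Hence $h_A=h^\star$, and comparing \eqref{eq:def-KAp} with \eqref{eq:kappa-p-def-star} gives $K_{A,p}(\cc)=\kappa_p^\star(\cc)$. The cell densities $\lambda_{r,s,t}(\cc)$ of Proposition~\ref{prop:cell-average} coincide with $\lambda^\star_{r,s,t}(\cc)$, so $\kappa_{\mathrm{sm}}(\cc)=\kappa^\star_{2,3}(\cc)$. Both the asymptotic and the bound $|\kappa_{2,3}^\star(\cc)|\le1$ then follow at once.

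The one substantive step is the cell law \eqref{eq:cell-law}:
\[
\rw^\star(m;\eta)=\sign(m)\,\sigma_{r,s,t}\,a^\star(|m|;\eta)
\]
for $m\ne0$. Here $a^\star(m;\eta)=a^\star(|m|;\eta)$ because $a^\star$ is even; equivalently, by the convolution identity it equals $\prod_{5\le p\le\eta}\omega_p^\star(m)$. It therefore suffices to show that $-\omega_2^\star(m)\omega_3^\star(m)=\sign(m)\sigma_{r,s,t}$.

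For $m>0$ this amounts to three facts, which we take from the proof of Proposition~\ref{prop:rw-interval-star}:
\begin{itemize}
\item $\rw_2^\star(m)$ depends only on $v_2(m)$ and the odd part $m_2$ modulo $16$, which the cell fixes since $2^4\mid T$;
\item $\rw_3^\star(m)$ depends only on $v_3(m)$;
\item the Jacobi symbol $\bigl(\frac{-2}{|m_2|}\bigr)=\bigl(\frac{-2}{3^s t}\bigr)$ is determined by $s$ and $t\bmod 8$.
\end{itemize}

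For $m<0$ the local root numbers agree with those at the same cell evaluated at the least positive representative $t$. The Jacobi symbol, however, changes by the factor
\[
\Bigl(\frac{-2}{|m_2|}\Bigr)=-\Bigl(\frac{-2}{3^st}\Bigr),
\]
which is $J_{-8}$ being an odd character modulo $8$. This sign flip supplies exactly the factor $\sign(m)$.

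I expect the only point needing care to be this negative-$m$ verification. One must confirm that $3^st\equiv -|m_2|\pmod{16}$ under the least-positive-representative convention. One must also check that $\rw_2^\star$ depends on the signed unit residue, consistently with Lemma~\ref{lem:W23-1728}, which is stated for signed $n_2$. With this cell law established, Proposition~\ref{prop:cell-average} gives the displayed asymptotic with error $O_{4,1}(x^\varepsilon)=O(x^\varepsilon)$, uniformly for $\cc\in\mathcal G_{\mathrm{arith}}(H)$, and the statement follows.
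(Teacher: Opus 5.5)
Your proposal is correct and is essentially the paper's proof. The paper likewise applies Proposition~\ref{prop:cell-average} with $A=a^\star$, $(b_2,b_3)=(4,1)$ and the same signs $\sigma_{r,s,t}$, identifies $K_{A,p}(\cc)=\kappa_p^\star(\cc)$ and $\kappa_{\mathrm{sm}}(\cc)=\kappa_{2,3}^\star(\cc)$, and obtains the factor $\sign(m)$ for negative $m$ from the oddness of $\bigl(\frac{-2}{\cdot}\bigr)$, exactly as you do; the congruence $m_2\equiv 3^st\pmod{16}$ for the signed odd part, which you flag as the point to confirm, does hold.
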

\begin{proof}
Take $A=a^\star$, $b_2=4$, $b_3=1$, and
\[
\sigma_{r,s,t}
=-\omega_2^\star(2^r3^s t)\omega_3^\star(2^r3^s t).
\]
For positive $m$, the factorisation of $\rw^\star(m;\eta)$ gives the cell law \eqref{eq:cell-law}; for negative $m$, the calculation in the proof of Proposition~\ref{prop:rw-interval-star} supplies the extra factor $\sign(m)$.
The densities in Proposition~\ref{prop:cell-average} are then exactly $\lambda_{r,s,t}^\star(\cc)$, its small prime constant is $\kappa_{2,3}^\star(\cc)$, and $K_{A,p}(\cc)=\kappa_p^\star(\cc)$.
This proves the stated formula and the bound $|\kappa_{2,3}^\star(\cc)|\le1$.
\end{proof}

We now prove \eqref{eq:main-fixed-star} in Theorem~\ref{thm:main}.
\begin{proof}[Proof of \eqref{eq:main-fixed-star} in Theorem~\ref{thm:main}]
Let $\mathcal E^\star$ be the exceptional set in Corollary~\ref{cor:pointwise-trunc-star}, and put
\begin{equation}\label{eq:def-B1728}
\mathcal B_{1728}(H)
=\mathcal E^\star\cup\bigl(S(H)\setminus\mathcal G_{1728}(H)\bigr)
\cup\{\cc\in S(H):|c_d|<Hx^{-1/2}\}.
\end{equation}
Apply Lemma~\ref{lem:final-assembly} with
\[
w=\rw^\star,\quad
w_\eta=\rw^\star(\,\cdot\,;\eta),\quad
K_{2,3}=\kappa_{2,3}^\star,\quad K_p=\kappa_p^\star.
\]
The truncation and genericity hypotheses are Corollary~\ref{cor:pointwise-trunc-star} and Lemma~\ref{lem:generic-families}.
The remaining inputs are Propositions~\ref{prop:asymp-star} and~\ref{prop:kappa-tail-star}, together with Lemmas~\ref{lem:local-convexity} and~\ref{lem:kappa-strict-star}.
Consequently,
\begin{equation}\label{eq:kappa-c-product-star}
\kappa_\cc^\star
=\sign(c_d)\kappa_{2,3}^\star(\cc)\prod_{p\ge5}\kappa_p^\star(\cc)
\end{equation}
satisfies $|\kappa_\cc^\star|<1$, and \eqref{eq:main-fixed-star} holds for every $\cc\notin\mathcal B_{1728}(H)$.
Also,
\[
\#\mathcal B_{1728}(H)\ll\theta^{-1/3}H^{d+1}.
\]

Finally, set
\[
\mathcal E_d(H)=\mathcal B_0(H)\cup\mathcal B_{1728}(H).
\]
This set has the size asserted in Theorem~\ref{thm:main}; it contains every vector with $c_d=0$, and both estimates \eqref{eq:main-fixed} and \eqref{eq:main-fixed-star} hold for every $\cc\notin\mathcal E_d(H)$.
\end{proof}

\bibliographystyle{amsplain}
\enlargethispage{4\baselineskip}
\bibliography{Ref_revised}

\end{document}